\documentclass[11pt]{amsart}

\setlength{\textwidth}{15.5 cm}
\setlength{\textheight}{21cm}
\setlength{\oddsidemargin}{35 pt}
\setlength{\evensidemargin}{35 pt}

\usepackage{graphicx}
\usepackage{amssymb}

\usepackage{float}

\usepackage[alphabetic,lite]{amsrefs}
 \usepackage{url}
\usepackage{booktabs}
\usepackage{hyperref}
\usepackage[nameinlink,capitalize]{cleveref}
\usepackage{verbatim}
\usepackage[table]{xcolor}
\usepackage{array}

\usepackage{tabularx}
\usepackage{array}
\usepackage{todonotes}
\usepackage[shortlabels]{enumitem}

\usepackage{soul}

\newcommand{\RN}[1]
    {\MakeUppercase{\romannumeral #1}}

\numberwithin{equation}{section}

\newtheorem{theorem}{Theorem}[section]
\newtheorem{lemma}[theorem]{Lemma}
\newtheorem{proposition}[theorem]{Proposition}
\newtheorem{corollary}[theorem]{Corollary}
\newtheorem{conjecture}[theorem]{Conjecture}

\newtheorem{question}[theorem]{Question}
\newtheorem{remark}[theorem]{Remark}
\newtheorem{definition}[theorem]{Definition}

\newtheorem{example}[theorem]{Example}

\newcommand{\Cr}[0]{\textrm{Cr}}

\newcommand{\LL}{\mathcal{L}}

\newcommand{\PP}[0]{\mathbb{P}}

\newcommand{\R}[0]{\mathbb{R}}

\newcommand{\Q}[0]{\mathbb{Q}}

\DeclareMathOperator{\wdim}{wdim}

\DeclareMathOperator{\HH}{H}

\DeclareMathOperator{\Bs}{Bs}

\DeclareMathOperator{\Eff}{Eff}
\DeclareMathOperator{\Nef}{Nef}
\DeclareMathOperator{\Mov}{Mov}
\def\Pic{\operatorname{Pic}}
\DeclareMathOperator{\N}{N}
\DeclareMathOperator{\NE}{NE}

\DeclareMathOperator{\WCD}{WCD}
\DeclareMathOperator{\SBLD}{SBLD}
\DeclareMathOperator{\MCD}{MCD}

\newcommand{\OO}{\mathcal{O}}

\definecolor{Green}{rgb}{0.01, 0.75, 0.24}

\title[Birational geometry via Weyl actions on curves]{Birational geometry of blowups via Weyl chamber decompositions and actions on curves}

\author[Brambilla]{Maria Chiara Brambilla}\address{
Universit\`a Politecnica delle Marche, via Brecce Bianche, I-60131 Ancona, Italia}
\email{m.c.brambilla@univpm.it}

\author[Dumitrescu]{Olivia Dumitrescu}\address{
1. University of North Carolina at Chapel Hill, 
340 Phillips Hall CB 3250 NC 27599-3250 and 2. Institute of Mathematics of the Romanian Academy "Simion Stoilow" IMAR.
}
\email{dolivia@unc.edu}

\author[Postinghel]{Elisa Postinghel}\address{
Dipartimento di Matematica, Universit\`a degli Studi di Trento, via Sommarive 14
I-38123 Povo di Trento (TN)}
\email{elisa.postinghel@unitn.it}

\author[Santana S\'anchez]{Luis Jos\'e Santana S\'anchez}\address{Departamento de Matemáticas, Estad\'istica e I. O., and  IMAULL, Universidad de La Laguna 38200, La Laguna, Tenerife, Espa\~na}\email{lsantans@ull.edu.es}

\keywords{Weyl $r$-planes, cones of $k$-moving curves, 
chamber decomposition,
Weyl expected dimension,
blown-up projective spaces}

\subjclass[2020]{Primary: 14E05 Secondary: 14C20, 14C25}

\begin{document}

\begin{abstract}
We study the birational geometry of $X^n_s$, the blow-up of $\PP^n_\mathbb{C}$ at $s$ points in general position. 
We identify a set of subvarieties, which we call  Weyl $r$-planes,  that belong to an orbit for the action of the Weyl group on $r$-cycles. They satisfy the following  properties:  they appear as stable base locus of divisors; each Weyl $r$-plane is swept out by an $(n-r)$-moving curve class; moreover, if $s\ge   n+3$, for any fixed $r$ all these curve classes belong to the same orbit for the Weyl action. 

For Mori dream spaces of type $X^n_s$, all such orbits are finite and they allow to reinterpret Mukai's description of the Mori chamber decomposition of the effective cone in terms of $(n-r)$-moving  curve classes, unifying previous different approaches.

 If $X^n_s$ is not a Mori dream space, there are infinitely many Weyl $r$-planes. These yields the definition of  the Weyl chamber decomposition of the pseudoeffective cone of divisors. We pose the question as to whether the nef chamber decomposition can be defined (in the negative part of $\overline{\Eff}(X^n_s)$)  and, if  this is the  case, whether it coincides with the Weyl chamber decomposition. 
We conjecture that the answer is affirmative for $X^3_8$ and $X^5_9$. 

\end{abstract}

\maketitle

\tableofcontents

\section{Introduction}
Let $X^n_s$ denote the complex projective space $\PP^n$ blown up at $s$ general points.
The birational geometry of these varieties has attracted the attention of many authors over the last decades. First of all, we recall that a complete classification of those that are Mori dream spaces is known,  see list \eqref{list MDS}, due to \cite{Mukai01} and \cite{CT}, see also \cite{DM1}. For such families, the property of being a  Mori dream space turns out to be equivalent to that of being of Fano type, see \cite{AM}. 
 If $X^n_s$ is a Mori dream space, then the effective cone of divisors is rational polyhedral, and its extremal rays are spanned by finitely many divisors that, for $s> n+1$, form the orbit of an exceptional divisor for the action of the Weyl group, see \cite{CT} for $s\le n+3$ and \cite{CCF} for $X^{4}_8$.
Moreover, the Mori chamber decomposition ($\MCD$) and  the stable base locus decomposition ($\SBLD$) of the effective cone of any Mori dream space $X^n_s$ coincide and are induced by a hyperplane arrangement,  as observed in \cite{BCP,BDPS},
following a thorough analysis of the  base loci of effective divisors carried  out in \cite{BraDumPos3},\cite{BDP-Ciro}.
Moreover, the
 hyperplane arrangement inducing the MCD is preserved by the Weyl action.

We recall that the Weyl group $W^n_s$ acting on the Chow ring $A(X^n_s)$ is the group generated by standard Cremona transformations, see Section \ref{Cremona-transf}.
Throughout the paper, 
we will assume that the Weyl group is not trivial, that is,  we will always assume that $s\geq n+1$.
The action of $W^n_s$ on divisors was studied by Mukai in \cite{Mukai04} (see also \cite{DPR});
the action on curves  was studied by the second author and Miranda in \cite{DM2, DM3}. 
Moreover, in \cite{DM1} the second author and Miranda computed Weyl actions on 2-cycles in $\mathbb{P}^4$ blown up along $8$ points and all the $(-1)$-Weyl lines interpolating them. In particular, they defined and classified {\it Weyl $r$-planes} in $X^4_8$ as the elements of the effective Weyl orbit of the strict transform of a linear subspace $L_I$ in $X^4_8$ under the blow ups of all its linear cycles,  cf. Definition \ref{weyl plane}. 
 The computation of Weyl $r$-planes 
 requires certain techniques of birational geometry that are difficult to generalize to arbitrary dimension, and therefore Weyl orbits are hard to provide via this definition.

At the same time, an alternative interpretation of {\it Weyl cycles}  in $X^n_s$ was provided in \cite{BDP-Ciro},  
see  Definition \ref{Weyl-cycles}.
This definition is easier to use, due to the authors' previous work
that includes a thorough analysis of the stable base locus of any effective divisor on $X^n_s$.
Such techniques were originally developed to compute the dimension of linear systems of hypersurfaces of $\mathbb{P}^n$ with assigned multiple points.

 The first goal of this paper is to 
unify the theory of such cycles for all  Mori dream spaces of type $X^n_s$.
The first main result of our paper is the following complete description.
\begin{theorem}\label{main}
Assume $s\ge n+1$ and let $X=X^n_s$ be a Mori dream space.
Let $W$ be an element of $A_{r}(X^n_s)$, for $1\leq r\leq n-1$. Then the following statements are equivalent:
\begin{enumerate}
\item
$W$ is a Weyl $r$-plane;
\item
$W$ is a Weyl cycle of dimension $r$;
\item 
$W$ is a stable base locus subvariety.
\end{enumerate}

Moreover if $s\le n+3$ the following is also an equivalent statement:
\begin{itemize}
\item[(4)] $W$ is a join or an exceptional divisor.
\end{itemize}
\end{theorem}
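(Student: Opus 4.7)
My plan is to establish the chain of implications $(1)\Rightarrow(3)\Rightarrow(2)\Rightarrow(1)$ using the three available descriptions, and then to treat the small case $s\leq n+3$ separately by the explicit classification of Mori dream spaces in this range.

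The equivalence $(2)\Leftrightarrow(3)$ follows essentially from the results of \cite{BraDumPos3,BDP-Ciro}. In one direction, Definition \ref{Weyl-cycles} constructs Weyl cycles as the $W^n_s$-orbit of certain distinguished base locus components of effective divisors; since standard Cremona transformations are isomorphisms in codimension one, the Weyl action preserves the $\SBLD$ of $\overline{\Eff}(X^n_s)$, so each Weyl translate of such a component is itself a stable base locus subvariety of a translated divisor, giving $(2)\Rightarrow(3)$. Conversely, for $(3)\Rightarrow(2)$ I would appeal to the full classification in loc.~cit., which shows that every stable base locus component of a divisor on $X^n_s$ is a Weyl translate of the strict transform of a linear cycle spanned by a subset of the blown-up points, hence a Weyl cycle.

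The delicate step is $(1)\Leftrightarrow(3)$. Both sets are $W^n_s$-orbits but with different base points: Definition \ref{weyl plane} starts with the strict transform of a linear span $L_I$ after blowing up the full tower of sub-linear cycles contained in it, while the base loci in \cite{BDP-Ciro} start from explicit irreducible components of effective divisors. The plan is to show that, in the Mori dream case, these two initial sets agree up to Weyl translation. In one direction I would exhibit for each linear span $L_I$ an effective divisor whose stable base locus contains the corresponding strict transform as an irreducible component (for instance a Mukai-type divisor or an $L_I$-pencil). In the other direction I would identify each initial base locus component of \cite{BDP-Ciro} geometrically as the strict transform of some $L_I$. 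Since the acting group is the same, coincidence of initial cycles implies coincidence of orbits. For the final statement, when $s\leq n+3$ the effective cone is completely understood via Mukai's description \cite{Mukai04,Mukai01}; the Weyl group is finite, the orbits of exceptional divisors span the extremal rays, and the Weyl orbits of the linear cycles $L_I$ can be enumerated explicitly. A direct comparison with the list of joins and exceptional divisors then yields $(4)$.

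The main obstacle will be the identification step in $(1)\Leftrightarrow(3)$: matching the iterative blow-up construction of \cite{DM1}, which requires controlling strict transforms under a full tower of blow-ups and is difficult to track in arbitrary dimension, with the cleaner but less geometrically transparent base locus construction of \cite{BDP-Ciro}. Verifying that the Weyl action commutes with passage to these strict transforms, so that the two sets of initial cycles really lie in a common orbit, is where the bulk of the work should concentrate.
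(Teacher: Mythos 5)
Your proposal has genuine gaps at the two places where the paper's actual work is concentrated. First, your argument for $(2)\Rightarrow(3)$ rests on the claim that the Weyl action preserves the stable base locus decomposition. This is false: Cremona transformations have nontrivial indeterminacy loci, so divisors can gain or lose base locus components under the Weyl action (see Example \ref{eg}, where a nef divisor on $X^4_5$ is sent to one whose stable base locus contains planes; the cones $\mathcal{D}_k$ are not Weyl-invariant for $k\ge 2$). You have also mischaracterized Definition \ref{Weyl-cycles}: a Weyl cycle is \emph{not} defined as a Weyl orbit of base locus components, but as an irreducible component of an intersection of pairwise orthogonal Weyl divisors. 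Showing that these intersections are exactly the joins $J(L_I,\sigma_t)$ is substantive: it requires computing the intersection $D\cap V$ of a Weyl divisor with an orthogonal join class-by-class in $A_r(X^n_s)$ (Proposition \ref{thm: int ort}) and an induction realizing each join as a component of an explicit intersection of orthogonal Weyl divisors $E_{\mathcal I_j,\sigma_t}$ (Proposition \ref{joins are weyl cycles}). None of this is supplied by citing \cite{BraDumPos3,BDP-Ciro} alone.

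Second, for $(1)$ you correctly identify the obstacle --- tracking the effective Weyl orbit of the strict transform $L_{\mathcal I}$ through arbitrary-dimensional cycles --- but you offer no mechanism to overcome it, and ``coincidence of initial cycles implies coincidence of orbits'' is precisely the step that cannot be carried out directly, since the Weyl action on $r$-cycles involves binomial coefficients and is not tractable in closed form. The paper's key idea, which is absent from your plan, is to replace each join by the unique curve class $c_{I,t}$ sweeping it out (Lemma \ref{who-sweeps-what}) and to compute the effective Weyl orbit of these $1$-cycles under standard Cremona transformations, where the action is linear and splits into three explicit cases according to whether $\Gamma$ contains $I$, contains $I^c$, or neither (Proposition \ref{proposition curve orbit}). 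This shows the orbit of $c_{\mathcal I,0}$ is exactly $\{c_{I,t}: |I|+2t-1=r\}$ (plus the $e_i$ when $r=n-1$), and hence that Weyl $r$-planes are exactly the joins (Theorem \ref{2 iff 3}). The overall architecture also differs: the paper does not run a cycle of implications among $(1),(2),(3)$ but proves $(1)\Leftrightarrow(4)$ and $(2)\Leftrightarrow(4)$ separately, with $(3)\Leftrightarrow(4)$ coming from the known coincidence of the SBLD and MCD; your chain would need each arrow to stand on its own, and as written neither $(2)\Rightarrow(3)$ nor $(1)\Leftrightarrow(3)$ does.
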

{Stable base locus subvarieties are defined in Section \ref{SBL section} and joins in Section \ref{Background}.}

Recall that for $s=n+4$ and $n=3,4$, the equivalence of the first three statements follows from 
\cites{BDP-Ciro,DM1}.
The equivalence between  (3) and (4)  follows because the SBLD and the MCD  coincide  for Mori dream spaces of the form $X^n_s$.
In this paper we complete the proof of Theorem \ref{main} for $s\le n+3$. 
In particular the equivalence between (1) and (4) is proved in Section \ref{sec WP} (see 
Theorem \ref{2 iff 3}) and 
the equivalence between (2) and (4) is proved in Section \ref{sec WC} (see Proposition \ref{joins are weyl cycles} and  Theorem \ref{Weyl is join}).

In order to obtain our result,
the main tool we use  is the 
explicit study of the cones $\mathcal C_k$ of $k$-moving curves, see \cref{def Ck}.
We recall that 
in  \cite{BDPS} we proved that, for Mori dream spaces of the form $X^n_s$, {\it strong duality} holds, that is
the cone of $k$-moving curves is dual to the cone $\mathcal{D}_k$ of divisors whose stable base locus has codimension strictly larger than $k$, for $k\ge 1$, see Definition \ref{Dk cone definition}.

It is known that the Weyl action preserves the effective cone of divisors, see e.g. \cite[Theorem 4]{dumnicki}, while Proposition \ref{mov} states that it also preserves the movable cone of divisors. {On the other hand, the Weyl action does not preserve the divisorial cones $\mathcal D_k$, see for instance Example \ref{eg}.} 
The cones  of $k$-moving curves $\mathcal C_k$ have a better behavior, in the sense that  the effective Weyl orbit of a $k$-moving curve is contained in $\mathcal C_k$. 
Moreover,  we prove that the cone of $k$-moving curves $\mathcal C_k$ is generated by the elements of the effective Weyl orbits on the $k$-moving curve $$(n-k)h-\sum_{|I|=n-k+1}e_i,$$ along with the two Weyl orbits of extremal $0$-moving curves, see Theorems \ref{strong-duality-thm-n+3},  \ref{strong-duality-thm-X^4_8} and
\ref{strong-duality-thm-X^3_7}.

In the second part of the paper, we turn our attention to the case of blow-ups $X^n_s$ that  are not Mori dream spaces. 
The birational geometry of these 
 spaces   is not yet understood and it is currently studied from other points of view, see e.g.\ \cite{SX} for the threefold $X^3_8$. In Section \ref{section seven}, we discuss our point of view, focusing in particular on the cases $X^3_8$ and $X^5_9$, which turn out to have a similar behavior.
First, in Section \ref{section F}, we describe the crucial role played by the anticanonical curve class $F$, introduced in  \cite{DM2, DM3}, see \cref{DEF anticanonical curve}; which allows to subdivide the N\'eron-Severi space into a positive and a negative part, as in $X_s^2$ where $F=-K_{X_s^2}$, see Section \ref{section F}.
Moreover, we study whether $F$ is moving or not, covering all cases except for $X^n_{n+4}$, $n\ge 6$.
In particular, we show that  $F$ is a limit of moving curves in $X^5_9$ in Proposition \ref{mds}, while that  the same is true for $X^3_8$ follows from the work of \cite{SX}.
Notice that $F\in\mathcal{C}_0$, the cone of moving curves,  if and only if the pseudoeffective cone of divisors lies completely in the negative part of the N\'eron-Severi space. 
Furthermore, since the pseudoeffective cones of $X^3_8$ and $X^5_9$ have infinitely many extremal rays, their dual cone $\mathcal{C}_0$ will have infinitely many generators too; we conjecture that the latter fit in three Weyl orbits, see  Conjecture \ref{conj-5-9}.

We recall that for blow-ups $X^n_{s}$ which are not Mori dream spaces, the {Weyl $1$-planes} are infinitely many 
for $s\geq n+5$, and finitely many for $s=n+4$, see \cite{DM2}. We remark that the notion of {Weyl 1-plane} coincides with that of {$(-1)$-Weyl line} using the terminology of \cite{DM2}.
In Section \ref{section infinity}, we prove the following result.
\begin{theorem}\label{infinity}
Let  $X^n_{s}$ be a non-Mori dream space and $2\leq r\leq n-1$. 
Then there are infinitely many Weyl $r$-planes.
\end{theorem}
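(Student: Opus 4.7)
The plan is to exhibit, for each $r\in\{2,\ldots,n-1\}$, an infinite sequence of distinct Weyl $r$-planes by producing a sequence of Weyl group elements whose action on a reference class $[L_I]$ (with $|I|=r+1$) yields pairwise distinct classes in $A_r(X^n_s)$. The argument naturally splits according to whether $r$ is extremal or not.

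For $r=n-1$, Weyl $(n-1)$-planes are divisors lying in the effective Weyl orbit of some exceptional class $E_i$. Since $X^n_s$ is not a Mori dream space, $\overline{\Eff}(X^n_s)$ is not rational polyhedral, and so it has infinitely many extremal rays. By the argument recalled in the Introduction (cf.\ the structure results used in \Cref{main} and the description of the effective cone for $s>n+1$ from \cite{CT}), each such extremal ray is spanned by an effective Weyl translate of some $E_i$. This gives infinitely many Weyl $(n-1)$-planes.

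For $2\le r\le n-2$, the plan uses that, since $X^n_s$ is not a Mori dream space, the Weyl group $W^n_s$ is of indefinite Coxeter type, and so contains an element $\omega$ whose action on $\Pic(X^n_s)$ has spectral radius $\lambda>1$. Cremona transformations act as ring automorphisms of $A(X^n_s)$, so the action of $\omega$ on $A_r$ is determined by its action on $A_{n-1}$ via the intersection product; in particular the spectral radius on $A_r$ is at least $\lambda^{n-r}>1$. Provided that $[L_I]$ has a nonzero component along the dominant eigenvector of $\omega|_{A_r}$, which can be arranged by replacing $I$ with a Weyl-equivalent subset if necessary, the iterates $\omega^k([L_I])$ have coefficients growing without bound in the standard basis $\{h^{n-r},e_i^{n-r}\}$ of $A_r(X^n_s)$, and are therefore pairwise distinct for large $k$. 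This yields infinitely many Weyl $r$-planes.

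The main obstacle is ensuring the above non-trivial projection onto the dominant eigenspace. When $s\ge n+5$ the difficulty can be bypassed using the infinitely many Weyl $1$-planes of \cite{DM2}: pairing $[L_I]$ against a complementary $[L_J]$ with $|J|=n-r+1$ under the intersection pairing $A_r\otimes A_{n-r}\to\Z$ transfers the infinitude of distinct Weyl classes from $A_1$ back to $A_r$. When $s=n+4$ and $X^n_s$ is not a Mori dream space, such as $X^5_9$, the Weyl $1$-planes are finite and this shortcut is unavailable; here the plan is to conclude by explicit computation of the matrix of a Coxeter element on $A_r$ for the specific Coxeter diagram of $X^n_{n+4}$, combined with the known faithfulness of the Weyl representation on $\Pic(X^n_s)$.
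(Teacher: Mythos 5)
Your proposal diverges substantially from the paper's argument and, as written, has gaps that I do not see how to close. The central difficulty is your treatment of $2\le r\le n-2$ via spectral radii. The Weyl group acts on $A(X^n_s)$ through birational Cremona maps, not automorphisms, so the induced action on $A_r(X^n_s)$ is \emph{not} obtained from the action on $\Pic$ by taking $(n-r)$-fold intersection products; the paper explicitly remarks that the formulas on intermediate cycles involve binomial coefficients in $r$ and $n$ and are hard to write down, which is exactly why it avoids working in $A_r$ directly. Hence the claim that the spectral radius on $A_r$ is at least $\lambda^{n-r}>1$ is unsupported. More importantly, even if the classes $\omega^k([L_I])$ were pairwise distinct, this would not by itself produce Weyl $r$-planes: Definition \ref{weyl plane} requires membership in the \emph{effective} Weyl orbit, i.e.\ every intermediate image under the chain of standard Cremona transformations must be an effective $r$-cycle, and your argument never controls effectivity. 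Your $r=n-1$ case leans on the assertion that every extremal ray of $\overline{\Eff}(X^n_s)$ is spanned by a Weyl divisor, which for non-Mori dream spaces is precisely the open Question \ref{question neg-eff nonmds}(1) of this paper (the conclusion that there are infinitely many Weyl divisors is true and classical, but not for the reason you give). Finally, the intersection-pairing ``shortcut'' for $s\ge n+5$ does not work as stated: the identity $\omega(L_I)\cdot\omega(L_J)=L_I\cdot L_J$ is perfectly compatible with $\omega(L_I)$ taking only finitely many values while $\omega(L_J)$ takes infinitely many, since each fixed value of $\omega(L_I)$ only constrains $\omega(L_J)$ to a hyperplane.

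The paper's route avoids all of this by passing from $r$-cycles to the curves that sweep them out. Each Weyl $r$-plane is swept out by a curve class in the effective Weyl orbit of $c_{\mathcal I}=rh-\sum_{i\in\mathcal I}e_i$ with $|\mathcal I|=r+1$, and Proposition \ref{1-1} shows that this assignment is a bijection between the set of Weyl $r$-planes and that orbit (injectivity uses that an image curve of strictly larger degree sweeps out a variety interpolating more than $r+1$ points, hence not a Weyl $r$-plane). The Weyl action on curve classes is given by the simple linear rule \eqref{Cremona curves}, and Proposition \ref{cor} shows the effective orbit of $c_{\mathcal I}$ is infinite by invoking the degree-increasing recursion of Dumitrescu--Miranda (Proposition \ref{recursionCremona}), verified in the base cases $s=n+4$, $n\ge5$ and $s=n+5$, $n\in\{3,4\}$; that recursion keeps the classes effective at every step, which is what your iteration lacks. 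If you want to salvage your approach you would need, at minimum, a correct description of the Weyl action on $A_r$ and a proof that your iterates remain effective; the reduction to sweeping curve classes is the device that makes both of these tractable.
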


In  Section \ref{WSBL section}, we study the stable base locus of effective divisors on $X^n_s$ for any $n$ and $s$.
In particular, in Theorem 
\ref{Weyl-SBL} we show that any Weyl $r$-plane is a stable base locus subvariety, that is: the implication (1)$\Rightarrow$(3) of Theorem \ref{main}
holds in general.
Lemma \ref{weyl base locus} also provides a formula for the exact multiplicity of containment of a Weyl $r$-plane in the base locus of an effective divisor. This allows us to define a decomposition of 
the pseudoeffective cone of divisors $\overline{\Eff}(X^n_s)$, which we call \emph{Weyl chamber decomposition}, see
Section \ref{WCD section}. 
We state  Conjecture \ref{WCD-X59}, which claims  that for $X^3_8$ or $X^5_9$ the Weyl and the stable base locus decompositions agree and they give the nef chamber decomposition. In Question \ref{question neg-eff nonmds} we ask if the same conjecture can be extended to the negative part of the pseudoeffective cone of any $X^n_s$.

Finally, in Section \ref{section dimension}, as a second application of our analysis of the base loci,  we propose the  
notion of {\it Weyl expected dimension} of a linear system of divisors in any $X^n_{s}$.
This definition extends the previous ones  of linear expected dimension and of secant expected dimension,  for  divisors on $X^n_s$ given in \cite[Definition 3.2]{BDP-TAMS} and in \cite[Definition 6.1]{BraDumPos3}  respectively,  and  it generalizes the notion of  Weyl expected dimension  formulated for $X^4_8$ in \cite[Definition 2]{BDP-Ciro}. It is known that
the Weyl expected dimension equals the actual dimension of any divisors on any Mori dream space $X^n_s$ (see \cite{BDP-TAMS} and \cite{LPSS}) with the sole exception of the case $X^4_8$ for which the statement remains open, see \cite[Conjecture 1]{BDP-Ciro}. In Question \ref{conj-wdim}, we ask if the same is true for $X_{n+4}^n$.

\medskip

\subsection*{Acknowledgements}
We would like to thank Cinzia Casagrande for pointing out Example \ref{ex-elliptic} to us.

M.~C.~Brambilla's research is partially funded by the 
European Union Next Generation EU, M4C1, CUP E53C24002320006 - 
2022NBN7TL - Applied Algebraic Geometry of Tensors.

O.~Dumitrescu would like to express her gratitude to MPIM Bonn, for their generosity, hospitality and stimulating environment during her long stay. She is equally grateful to IHES and RIMS for their generosity during her time there. Her research is supported by the NSF-FRG DMS 2152130 grant.

E.~Postinghel's research  is partially
 funded by the European Union Next Generation EU, Mission 4, Component 2 - CUP E53D23005400001.
 
M.~C.~Brambilla and E.~Postinghel are members of INdAM-GNSAGA.

\section{Preliminaries}

Let $X=X^n_s$ denote the 
blow-up of $\PP^n$ at $s$ points in general position.

We denote by $A_r(X)$ the group of classes of cycles of dimension $r$ and by 
 $A(X)=\oplus_{r=0}^n A_r(X)$ the Chow ring of $X$. 

The Picard group of $X$ is spanned by the general hyperplane class $H$ and by the exceptional divisors $E_i$. {A divisor of the form
\begin{equation}\label{divisor} 
D=dH-\sum_{i=1}^s m_i E_i
\end{equation}
is said homogeneous if $m_i=m$ for all $i$.}

The anticanonical divisor class is 
\begin{equation}
\label{anticanonical divisor}
-K_X=(n+1)H-\sum_{i=1}^s (n-1)E_i.\end{equation}

Recall that the Dolgachev-Mukai pairing is the bilinear pairing on the Picard group of $X^n_s$ defined by:
\begin{equation}\label{DM}
 \langle H,H\rangle=n-1, \quad \langle E_i, E_j\rangle=-\delta_{i,j}, \quad \langle H, E_i\rangle=0.
\end{equation}
It is known that $X$ is a Mori dream space if and only if $\langle -K_X, -K_X\rangle>0$ (see \cite{DM2} and references within).
More precisely, the list of varieties of type $X^n_s$ which are Mori dream spaces is the following:
\begin{equation}\label{list MDS}
 X^2_s\text{ for }s\le 8; \
  X^3_s\text{ for }s\le 7;\
  X^4_s\text{ for }s\le 8;\
 X^n_s\text{ for }n\ge 5\text{ and }s\le n+3.
\end{equation}

The Dolgachev-Mukai pairing emerges from the Coxeter theory of the Weyl group acting
on the divisor classes. We recall here the main notions and we refer to \cites{Dolgachev,DM2,DM3} for more details.

\subsection{Cremona transformations and the Weyl group}\label{Cremona-transf}
The {\it standard Cremona transformation based on the coordinate points} of $\PP^n$
 is the birational transformation 
defined by the rational map:
$$
\Cr:=(x_0:\dots: x_n) \mapsto (x_0^{-1}:\dots: x_n^{-1})
$$

From now on assume that $s\ge n+1$. Given any subset $\Gamma\subseteq\{1,\ldots,s\}$ of cardinality $n+1$, we denote by $\Cr_\Gamma$ and call {\it standard Cremona transformation} the map obtained by precomposing $\Cr$ with a projective transformation which takes the points indexed by $\Gamma$ to the coordinate points of $\PP^n$.
A standard Cremona transformation induces an automorphism of $\Pic(X_s^n)$, denoted again by $\Cr_\Gamma$ by abuse of notation,
by sending a divisor \eqref{divisor}
 to  
\begin{equation}\label{cremona-rule}
\Cr_\Gamma(D)= (d-b_\Gamma)H-\sum_{i\in \Gamma}(m_i-b_\Gamma)E_i-\sum_{i\not\in \Gamma} m_i E_i,
\end{equation}
where 
$$b_\Gamma(D)=b_\Gamma := \sum_{i\in \Gamma} m_i-(n-1)d.$$ 
We say that a divisor $D$ \eqref{divisor} is {\it Cremona reduced} if $b_\Gamma(D)\le0$ for any $\Gamma\subseteq\{1,\ldots,s\}$ of cardinality $n+1$.

The {\it Weyl group} $W^n_{s}$  acting on the Chow ring $A(X_s^n)$ is the group generated by the standard Cremona transformations.

The anticanonical divisor $-K_X$ is the only homogeneous divisor, up to multiples,  which is invariant under the action of the Weyl group (if $s\ge n+2$ the assumption homogeneous is not necessary).

The Dolgachev-Mukai pairing \eqref{DM} is also preserved by the Weyl action (see \cite{DPR}).

Following  \cite{BDP-Ciro}, we call {\it Weyl divisor} an element of the  Weyl orbit of an exceptional divisor $E_i$.

\subsection{Weyl actions on curves}\label{Cremona-transf-curves}
Let
$\mathrm{N}_1(X)_{\mathbb R}=(A_1(X)/\equiv)\otimes {\mathbb R}$ be the space of  numerical equivalence classes of $1$-cycles, which is dual to the N\'eron-Severi space $\mathrm{N}^1(X)_{\mathbb R}$ with respect to the standard intersection pairing.
A basis of $\N_1(X_s^n)_\mathbb R$ is given by $h=H^{n-1}$, the general line class in $X^n_s$, and, for $i=1\dots,s$, by $e_i$, the class of a general line inside the exceptional divisor $E_i$.
 Thus,  the intersection product of a divisor $D \in \N^1(X_s^n)_\mathbb R$ of the form  \eqref{divisor} and a 
 1-cycle
 $c$ in $\N_1(X_s^n)_\mathbb R$ of class 
 \begin{equation}\label{curve on X^n_s}
	c=\delta h - \sum_{i=1}^s\mu_i e_i, 
\end{equation}
is computed  by
\begin{equation} \label{n+3: int pair} D\cdot c= d\delta  - \sum_{i=1}^s m_i \mu_i. \end{equation}

The Weyl action on curves (effective $1$-cycle)has been investigated in \cite{DM2}.
For an index set  $\Gamma \subset \{1, \ldots, s\}$  of length  $n+1$, the Cremona transformation $\Cr_\Gamma$ takes a curve class of the form \eqref{curve on X^n_s}
of $\N_1(X_s^n)_\mathbb R$,
 which is not contained in the indeterminacy locus of $\Cr_\Gamma$,
 to the following curve class 
\begin{equation} \label{Cremona curves}
	{\rm Cr}_\Gamma (c) = (\delta - (n-1)a_\Gamma) h - \sum_{i \in \Gamma}(\mu_i - a_\Gamma)e_i - \sum_{i \notin \Gamma}\mu_i e_i.
\end{equation}
where 
$$a_\Gamma(c)=a_\Gamma := \sum_{i \in \Gamma} \mu_i - \delta.$$

We point out that we can formally extend formula \eqref{Cremona curves} to any $1$-cycle; however
 in this case curves can be sent to non-effective cycles, and vice versa.
For instance if $c=h-e_1-e_2$ is
the class of a line in $\PP^n$ through two points $p_1$ and $p_2$,  and $\Gamma=\{1,\ldots,n+1\}$, then $\Cr_\Gamma(c)$ is the following non-effective $1$-cycle class: $-(n-2)h+\sum_{i=3}^{n+1}e_i$.

We recall from \cites{DM2, DM3} the following definition:
\begin{definition}\label{DEF anticanonical curve} The anticanonical curve class for $X^n_s$   is the  unique (up to multiples) symmetric 
curve class invariant under the Weyl group action
\begin{equation}\label{anticanonical curve}
F:=(n+1)h-\sum_{i=1}^{s} e_i.
\end{equation}
\end{definition}
We recall from \cite{DM2, DM3} that $X^n_s$ is a Mori dream space if and only if
$$-K_{X_s^n}\cdot F>0.$$

\subsection{Weyl action on $r$-cycles}

Given an effective cycle class $V\in A_r(X)$, we say that a cycle $V'\in A_r(X)$ is in the {\it effective Weyl orbit} of $V$ if  there is a finite sequence of standard Cremona transformations $\phi=\phi_k\circ\cdots\circ \phi_1$ with $\phi(V)=V'$ and such that $\phi_l\circ\cdots\circ\phi_1(V)$
is an effective $r$-cycle for every $1\le l \le k$. We denote by $(W_s^n\cdot V)^+$ the  effective Weyl orbit of $V$.

We observe that the Weyl group actions on divisors \eqref{cremona-rule} and curves \eqref{Cremona curves} are linear with respect to degrees and multiplicities of the divisor/curve and easy to apply. On the other hand, this is not the case for cycles of arbitrary dimensions $r$, where the general formulas need to involve binomial coefficients in $r$ and $n$.

Hence, in recent works,  different approaches have been adopted in order to study  the relevant families of rigid $r$-cycles with respect to the action of the Weyl group. In particular the families of Weyl $r$-planes and of Weyl cycles have been defined.

We recall first from \cite[Definition 1]{DM1} the following notion:

\begin{definition}
\label{weyl plane}
    A \textbf{Weyl $r$-plane} is an element of the  effective Weyl orbit of the strict transform of an $r$-dimensional plane passing through $r+1$ fixed points in  $A_{r}(X^n_s)$, for $1\leq r\leq n-1$. 
\end{definition}

On the other hand, in \cite[Definition 1]{BDP-Ciro}, a Weyl cycle of dimension $r<n-1$ in $A_{r}(X^n_s)$ is defined to be an irreducible component of the intersection of pairwise orthogonal Weyl divisors. 
We give here an equivalent uniform definition which includes also Weyl divisors as follows.
\begin{definition}
\label{Weyl-cycles}
A {\bf Weyl cycle of dimension $r$} in $A_{r}(X^n_s)$ is an irreducible component of the intersection of  Weyl divisors 
    $\{D_i\}$ such that $\langle D_i, D_j\rangle\leq 0$ with respect to the pairing \eqref{DM}, for any $i,j$.
\end{definition}

Now we show that  Definition \ref{Weyl-cycles} is equivalent to \cite[Definition 1]{BDP-Ciro}.

\begin{proposition}\label{weyl}
 $W$ is a Weyl cycle of dimension $r$ if and only if
\begin{enumerate}
\item  $r=n-1$ and  $W$ is a Weyl divisor, i.e.\ an element of the Weyl orbit of an exceptional divisor,
\item $r< n-1$ and $W$ is an irreducible component of the intersection of pairwise orthogonal Weyl divisors.
\end{enumerate}
\end{proposition}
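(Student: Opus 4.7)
The plan is to verify the biconditional in both the divisorial case $r=n-1$ and the non-divisorial case $r<n-1$.

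The ``if'' direction is immediate from Definition \ref{Weyl-cycles}. In case (1), take the singleton collection $\{W\}$: the only pairing to check is the self-pairing $\langle W,W\rangle=-1\leq 0$, which holds because the Weyl-invariance of the Dolgachev--Mukai form \eqref{DM} implies that every element of the Weyl orbit of an exceptional divisor has self-pairing $-1$. In case (2), pairwise orthogonality $\langle D_i,D_j\rangle=0$ trivially implies $\langle D_i,D_j\rangle\leq 0$. Hence in both cases $W$ satisfies Definition \ref{Weyl-cycles}.

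For the converse, let $W$ be an irreducible component of $\bigcap_i D_i$, where the $D_i$ are Weyl divisors with $\langle D_i,D_j\rangle\leq 0$ for all $i,j$. Removing repeated divisors from $\{D_i\}$ alters neither the intersection nor the pairing conditions, so we may assume the $D_i$ are pairwise distinct.

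For $r=n-1$, a dimension count closes the case. Each $D_i$ is an irreducible divisor on $X^n_s$, and two distinct irreducible divisors share no codimension-$1$ component, so any intersection $D_i\cap D_j$ with $i\neq j$ has codimension at least $2$. Since $\dim W=n-1$, the collection must consist of a single divisor, giving $W=D_1$ and proving (1).

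For $r<n-1$, the aim is to upgrade $\langle D_i,D_j\rangle\leq 0$ with $i\neq j$ to an equality, yielding pairwise orthogonality. This reduces to the following claim: any two distinct effective Weyl divisors $D$ and $D'$ satisfy $\langle D,D'\rangle\geq 0$. Combined with the hypothesis $\langle D_i,D_j\rangle\leq 0$ for $i\neq j$, the claim forces $\langle D_i,D_j\rangle=0$ and yields (2).

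The main obstacle is the proof of this claim. The strategy is to use the fact that every effective Weyl divisor is either an exceptional $E_k$ or of the form $dH-\sum m_iE_i$ with $d\geq 1$ and $m_i\geq 0$, as it arises as the strict transform of an irreducible hypersurface through the $p_i$ with multiplicity $m_i$. The exceptional--exceptional and the exceptional--versus--positive-degree pairings are computed directly from \eqref{DM} and give $0$ and $m_k\geq 0$, respectively. The delicate case of two positive-degree effective Weyl divisors, where the pairing becomes $(n-1)dd'-\sum m_im'_i$, is handled by the Weyl-invariance of the Dolgachev--Mukai form together with the classification of effective Weyl orbits from \cite{BDP-Ciro}: any hypothetical strictly negative value can be ruled out by simultaneously reducing the configuration via elements of $W^n_s$ to one in which both divisors are Cremona reduced, in which case the non-negativity of the pairing becomes manifest.
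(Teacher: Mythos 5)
Your overall architecture matches the paper's: everything reduces to showing that two \emph{distinct} effective Weyl divisors $D,D'$ satisfy $\langle D,D'\rangle\ge 0$, which together with the hypothesis $\langle D_i,D_j\rangle\le 0$ forces orthogonality. The paper obtains this inequality by citing \cite{DPR} (irreducibility of Weyl divisors implies non-negative pairing of distinct ones) and leaves the converse to the reader; your treatment of the ``if'' direction and of the case $r=n-1$ by a codimension count is correct.

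The gap is in your proof of the key claim for two positive-degree Weyl divisors. You propose to ``simultaneously reduce the configuration via elements of $W^n_s$ to one in which both divisors are Cremona reduced.'' This step fails: a Weyl group element acts on both divisors at once, and in general no $w\in W^n_s$ makes both Cremona reduced. Indeed the only Cremona reduced Weyl divisors are the exceptional divisors themselves (any other element of the orbit of $E_k$ has $b_\Gamma>0$ for some $\Gamma$, since its degree must strictly drop along the reduction to $E_k$), so your reduction would amount to finding $w$ with $w(D)=E_k$ and $w(D')=E_j$ simultaneously --- impossible whenever $\langle D,D'\rangle>0$, since $\langle E_k,E_j\rangle\in\{0,-1\}$. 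Moreover, even for a pair of Cremona reduced divisors the non-negativity of $(n-1)dd'-\sum m_im_i'$ is not ``manifest.'' The fix is close to what you wrote: choose $w$ with $w(D)=E_k$ only; by Weyl-invariance of $h^0$ (see \cite{dumnicki}) the image $w(D')$ is still an effective, irreducible Weyl divisor distinct from $E_k$, hence either another exceptional divisor (pairing $0$) or of the form $d'H-\sum m_i'E_i$ with all $m_i'\ge 0$, giving $\langle E_k,w(D')\rangle=m_k'\ge0$. Alternatively, cite \cite{DPR} as the paper does, or observe that $\langle D,D'\rangle<0$ would force $D'$ into the base locus of $|D|$ with positive multiplicity, contradicting $h^0(D)=1$ and the irreducibility of $D$.
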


\begin{proof} 
If $W$ is a divisor $D$, there is nothing to prove, because any Weyl divisor satisfies $\langle D,D\rangle=-1$.

Assume $r\le n-2$ and let $W$ be as in Definition \ref{Weyl-cycles}. If $i=j$, we have $\langle D_i,D_j\rangle=-1$, while if $i\ne j$, then the irreducibility of $D_i$ implies that $\langle D_i, D_j\rangle\geq 0$ (see \cite{DPR}), therefore distinct Weyl divisors $D_i$ and $D_j$ are pairwise orthogonal. i.e. $\langle D_i, D_j\rangle=0$. This proves that $W$ satisfies \cite[Definition 1]{BDP-Ciro}. The converse is easy and left to the reader.
\end{proof}

We show now that the pairwise orthogonal Weyl divisors  span a face of the effective cone of $X^n_s$.

\begin{proposition}\label{eff face}
A set  of Weyl divisors that are pairwise orthogonal with respect to the bilinear pairing \eqref{DM} on $X^n_s$ span a face of the pseudoeffective cone of $X^n_s$.
\end{proposition}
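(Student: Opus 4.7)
The plan is to reduce, via the Weyl group action, to the model case where each $D_i$ is an exceptional divisor, and then conclude from the face structure of the simplicial ``exceptional cone'' $\Cone(E_1,\ldots,E_s)$.

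First I would observe that $D_1,\ldots,D_k$ are linearly independent: their Gram matrix with respect to the Dolgachev--Mukai pairing is $-I_k$ (each $\langle D_i,D_i\rangle=-1$ since $D_i$ is a Weyl divisor, and the off-diagonal entries vanish by hypothesis), which is nondegenerate. Hence $F:=\Cone(D_1,\ldots,D_k)$ is a $k$-dimensional simplicial subcone of $\overline{\Eff}(X^n_s)$.

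Next I would prove by induction on $k$ that there exist $\phi\in W^n_s$ and distinct indices $j_1,\ldots,j_k\in\{1,\ldots,s\}$ with $\phi(D_i)=E_{j_i}$ for all $i$. For $k=1$ this is the definition of Weyl divisor. For the inductive step, choose $\phi_1\in W^n_s$ with $\phi_1(D_1)=E_{j_1}$. Since the Weyl action preserves both the orbit of $E_1$ and the pairing \eqref{DM}, the classes $\phi_1(D_2),\ldots,\phi_1(D_k)$ are again pairwise orthogonal Weyl divisors, each orthogonal to $E_{j_1}$. From \eqref{DM} one computes $\langle dH-\sum m_l E_l,E_{j_1}\rangle=m_{j_1}$, so orthogonality forces the $E_{j_1}$-coefficient to vanish, and these classes lie in the image of $\Pic(X^n_{s-1})\hookrightarrow \Pic(X^n_s)$, where $X^n_{s-1}$ corresponds to forgetting $p_{j_1}$. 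After verifying that they are in fact Weyl divisors of $X^n_{s-1}$, I would apply the induction inside $W^n_{s-1}\hookrightarrow W^n_s$ (the subgroup generated by $\Cr_\Gamma$ with $j_1\notin\Gamma$, which fixes $E_{j_1}$).

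Finally I would conclude as follows: since the Weyl action preserves $\overline{\Eff}(X^n_s)$, it maps faces to faces, so $F$ is a face if and only if $\phi(F)=\Cone(E_{j_1},\ldots,E_{j_k})$ is. The exceptional cone $\Cone(E_1,\ldots,E_s)$ equals $\overline{\Eff}(X^n_s)\cap\{\langle\,\cdot\,,H\rangle=0\}$ (the functional $\langle\,\cdot\,,H\rangle=(n-1)d$ is nonnegative on pseudoeffective divisors, with equality precisely on divisors supported on the exceptional locus), so it is itself a face of $\overline{\Eff}(X^n_s)$. Being simplicial, any subset of its generators spans a face of it, and a face of a face is a face, which completes the proof.

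I expect the main obstacle to be the inductive reduction step, namely verifying that every Weyl divisor on $X^n_s$ whose class is orthogonal to $E_{j_1}$ is also a Weyl divisor on $X^n_{s-1}$. Equivalently, one must show that the $W^n_s$-orbit of $E_1$ intersected with the hyperplane $\{m_{j_1}=0\}\subset\Pic(X^n_s)$ coincides with the $W^n_{s-1}$-orbit of $E_l$ for some (equivalently any) $l\neq j_1$, which can be checked by a careful analysis of \eqref{cremona-rule} restricted to Cremona transformations $\Cr_\Gamma$ with $j_1\notin\Gamma$.
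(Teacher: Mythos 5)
Your strategy breaks at the inductive reduction step, and not merely because the verification is omitted: the claim that a set of pairwise orthogonal Weyl divisors can be carried by a single element of $W^n_s$ to a set of distinct exceptional divisors is false. Take $D_1=E_s$ and $D_2=H-E_1-\cdots-E_n=\Cr_{\{1,\dots,n+1\}}(E_{n+1})$ in $X^n_s$, $s\ge n+1$: these are orthogonal Weyl divisors, but no $\phi\in W^n_s$ sends both to exceptional divisors. Indeed $\phi$ preserves the pairing \eqref{DM} and the set of Weyl divisors, and $\{E_a,E_b\}$ extends to the pairwise orthogonal set $\{E_1,\dots,E_s\}$ of size $s$, whereas the maximal pairwise orthogonal set of Weyl divisors containing $\{D_1,D_2\}$ is $\{E_{n+1},\dots,E_s,\,H-E_1-\cdots-E_n\}$, of size $s-n+1<s$: a Weyl divisor orthogonal to $E_{n+1},\dots,E_s$ is supported on the first $n$ points, and the Weyl-invariant constraints $\langle D,D\rangle=-1$, $\langle D,-K\rangle=n-1$ together with Cauchy--Schwarz force such a class to be one of $E_1,\dots,E_n$ (none orthogonal to $D_2$) or $D_2$ itself. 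This is the classical phenomenon that contracting the disjoint $(-1)$-curves $E_3$ and $H-E_1-E_2$ on $X^2_3$ yields $\PP^1\times\PP^1$ rather than a blow-down to $\PP^2$. Your other steps --- linear independence from the Gram matrix, the exceptional cone being the face $\overline{\Eff}(X^n_s)\cap\{D\cdot h=0\}$, and transport of faces by the Weyl action --- are sound, but they cannot be assembled without this false reduction, and repairing it by allowing configurations like $\{E_s,H-E_1-\cdots-E_n\}$ as targets would be circular.

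The paper avoids the issue entirely with a rigidity argument: by \cite{BDP-TAMS}*{Proposition 2.3} the multiplicity of containment of $D_i$ in the base locus of an effective divisor $D$ is $\max\{0,-\langle D,D_i\rangle\}$, so orthogonality and $\langle D_i,D_i\rangle=-1$ give that $a_1D_1+\cdots+a_kD_k$ contains each $D_i$ with multiplicity exactly $a_i$ in its base locus; hence $h^0(X,\sum a_iD_i)=1$ for all $a_i\ge0$, every class in the cone is rigid, no big class lies in it, and the cone is a face. That is the tool you are missing.
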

\begin{proof}
It is enough to show that the cone spanned by $D_1,D_2$ is a 2-dimensional face of the effective cone, for any pair of orthogonal  Weyl divisors $D_1$ and $D_2$. Using \cite[Proposition 2.3]{BDP-TAMS},
the multiplicity of containment of $D_i$ in the base locus of an effective divisor $D$ is  $\max\{0,-\langle D, D_i\rangle\}$. Hence, using the orthogonality assumptions,
we have that the base locus of the divisor $a_1 D_1+a_2 D_2$ contains $D_1$ with multiplicity $a_1$ and $D_2$ with multiplicity $a_2$. 
So we  obtain that $|a_1 D_1+a_2 D_2| =\{a_1 D_1 + a_2 D_2\}$, for any $a_1,a_2\ge 0$, and so  $h^0(X,a_1 D_1+a_2 D_2)=1$. This proves that there is no big divisor linearly equivalent to a positive linear combination of $D_1$ and $D_2$.
\end{proof}

\subsection{Stable base locus varieties}\label{SBL section}
For any $X$ with $h^1(\mathcal{O}_X)=0$, the pseudoeffective cone $\overline{\Eff}(X)$ admits a wall-and-chamber decomposition called {\it stable base locus decomposition}, see e.g. \cite[Sect 4.1.3]{Huizenga}. 
Recall that the stable base locus of an effective divisor $D$ is the Zariski closed set
$$\mathbb{B}(D) = \bigcap_{m>0} \Bs(mD)$$
where $\Bs(D)$ is the base locus of the linear system $|D|$.
\begin{example}\label{ex-elliptic} 
In $X=X_8^4$  consider the divisor $-K_X$. In \cite[Corollary 7.6]{CCF} the authors show that 
the transform $R$ of
a smooth rational quintic curve in $\PP^4$ through the $8$ blown-up points is in the base locus of $-K_X$. On the other hand $R$ is not in the stable base locus of $-K_X$, see \cite{Xie}. 
\end{example}

All the divisors in an open chamber  of the stable base locus decomposition have the same stable base locus.
The nef cone $\Nef(X)$ is one of the closed chambers  of the stable base locus decomposition, since it is the closure of the cone generated by divisors with empty stable base locus.

We say that an irreducible and reduced subvariety $Y$ of $X$ is a {\it stable base locus subvariety of $X$} if it is a component of the stable base locus of every effective 
divisor $D$ 
containing $Y$ in its base locus.

A divisor $D$ in $X$ is called {\it movable} if its stable base locus has codimension at least two in $X$. 
The movable cone of $X$ is the convex cone $\Mov(X)$ generated by classes of movable divisors.
We recall from \cite{Payne, BDPS} the following definition.

\begin{definition}\label{Dk cone definition}
	Let $0\le k \le n-1$.
	We define $D_k$ to be the cone generated by the classes of {effective} divisors with no component of the stable base locus of dimension $\ge n-k$;  we denote with
	$\mathcal D_k$ the closure of $D_k$ in $\overline{\Eff}(X)$.
\end{definition}

We have the following  stratification
\begin{equation}\label{filtration-cones-divisors}
 \Nef(X)=\mathcal{D}_{n-1} \subseteq \mathcal{D}_{n-2} \subseteq \cdots \subseteq \mathcal{D}_{1}=\overline{\Mov(X)} \subseteq \mathcal{D}_{0}=\overline{\Eff}(X),
\end{equation}
which is compatible with the stable base locus decomposition of $\overline{\Eff}(X)$ because
 each cone $\mathcal D_k$ is naturally a union of  closed stable base locus chambers. 
We refer to \cite{BDPS} for more details on the properties of such cones.

Note that 
the number of global sections is invariant  for the action of the Weyl group $W^n_s$, see \cite[Theorem 4]{dumnicki}, hence the effective cone $\overline{\Eff}(X^n_s)$ is preserved by such action. 
Furthermore, the cone of movable divisors is invariant under the Weyl action, as proved in the following proposition.

\begin{proposition}\label{mov}
  Let $X^n_s$ be a Mori dream space. Then the movable cone of divisors is preserved under the Weyl action.
\end{proposition}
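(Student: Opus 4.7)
The plan is to observe that, although the Weyl action on $\N^1(X)$ is defined purely formally by \eqref{cremona-rule}, each generator $\Cr_\Gamma$ arises from a genuine pseudo-automorphism (an isomorphism in codimension one) of $X=X^n_s$ itself, and that pseudo-automorphisms preserve the property of having stable base locus of codimension at least two. Since $W^n_s$ is generated by the $\Cr_\Gamma$'s, this will yield the claim.

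First I would verify that $\Cr_\Gamma$ is a pseudo-automorphism of $X$. The only divisors contracted by $\Cr_\Gamma\colon\PP^n\dashrightarrow\PP^n$ are the coordinate hyperplanes $H_i$ for $i\in\Gamma$, each collapsed to the coordinate point $p_i$. On $X$ these lift to the strict transforms $\widetilde H_i = H-\sum_{j\in\Gamma\setminus\{i\}}E_j$, and a direct substitution in \eqref{cremona-rule} gives $\Cr_\Gamma(\widetilde H_i)=E_i$. Hence on $X$ the Cremona contracts no divisor; it merely exchanges the sets $\{\widetilde H_i\}_{i\in\Gamma}$ and $\{E_i\}_{i\in\Gamma}$. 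Its indeterminacy locus is supported on the strict transforms of the coordinate subspaces of $\PP^n$ indexed by proper subsets of $\Gamma$ of size between $2$ and $n-1$, all of which have codimension at least two in $X$.

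Then I would invoke the standard principle that any pseudo-automorphism $\phi\colon X\dashrightarrow X$ preserves $\Mov(X)$: if $U\subseteq X$ is the domain of definition of $\phi$ and $V=\phi(U)$, then $X\setminus U$ and $X\setminus V$ have codimension at least two, pullback along $\phi$ induces an isomorphism $H^0(X,\phi_*D)\cong H^0(X,D)$ for every effective $D$, and $\Bs(|m\phi_*D|)\cap V=\phi(\Bs(|mD|)\cap U)$ for every $m\ge 1$, so that $\mathbb{B}(D)$ has codimension $\ge 2$ if and only if $\mathbb{B}(\phi_*D)$ does. Combining this with the previous paragraph and the fact that $W^n_s$ is generated by the $\Cr_\Gamma$'s, we conclude that $W^n_s$ preserves $\Mov(X)$ and, by continuity, $\overline{\Mov(X)}$. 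The main subtlety lies in the pseudo-automorphism claim, which demands confirming that no divisor beyond the $\widetilde H_i$'s is contracted when resolving $\Cr_\Gamma$ on $X$; the explicit exchange $\widetilde H_i\leftrightarrow E_i$ identified above keeps this verification transparent. We remark that the MDS hypothesis is not essential for the argument itself, but it ensures $\overline{\Mov(X)}$ is a rational polyhedral cone on which $W^n_s$ permutes a finite set of faces.
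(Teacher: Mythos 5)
Your proof is correct, but it takes a genuinely different route from the paper's. The paper argues numerically: it invokes the characterization (from \cite[Theorem 4.7]{pragmatic} for $X^n_{n+3}$ and from \cite{CCF} for $X^4_8$) of an effective movable divisor as one satisfying $\langle D, D_i\rangle\ge 0$ for every Weyl divisor $D_i$, and then concludes at once because the Weyl action preserves both the Dolgachev--Mukai pairing and the effective cone. You instead argue geometrically, realizing each generator $\Cr_\Gamma$ as a pseudo-automorphism of $X^n_s$ and using the general fact that an isomorphism in codimension one identifies spaces of sections and matches stable base loci up to codimension-two discrepancies, hence preserves $\Mov$. Your verification that no divisor is contracted is complete: every prime divisor of $X$ is either some $E_j$ or the strict transform of a hypersurface of $\PP^n$, the only hypersurfaces contracted downstairs are the coordinate hyperplanes, and on $X$ these are exchanged with the $E_i$, $i\in\Gamma$ (your class computation $\Cr_\Gamma(\widetilde H_i)=E_i$ checks out), while everything else is untouched away from the codimension-$\ge 2$ indeterminacy locus; the same holds for the inverse, which is again a standard Cremona. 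What your approach buys is self-containedness and generality: it does not rely on the MDS-specific facial description of $\Mov$, so, as you observe, it proves the statement for every $X^n_s$, not just the Mori dream space cases. What the paper's approach buys is brevity given the cited results and consistency with the pairing/orthogonality formalism used throughout the rest of the article.
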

\begin{proof}
Indeed, \cite[Theorem 4.7]{pragmatic} for $X^n_{s+3}$ and \cite{CCF} 
for $X^4_8$ (for more details see also Section \ref{gale section} below)
 implies that an effective divisor $D$ is movable if and only if  $\langle D, D_i\rangle\geq 0$ for every  Weyl divisor $D_i$.
We conclude by recalling that the Weyl action preserves the Dolgachev-Mukai pairing and  the effective cone $\overline{\Eff}(X^n_s)$. 
\end{proof}

{On the other hand,} the stable base locus is not an invariant, nor does $W^n_s$ preserve the cones $\mathcal{D}_k$, for $k\ge2$.
Indeed, applying a Cremona transformation to a divisor might cause loss or acquisition of {stable} base locus, due to the indeterminacy locus of the transformation.
The following simple example illustrates this fact.
\begin{example}\label{eg}
In $X=X_5^4$  consider the divisor $D=2H-E_1-E_2$.  It is easy to check that  $D$ is nef, i.e.\ $D\in \mathcal{D}_3$. Applying a Cremona transformation we obtain $D'=6H-5E_1-5E_2 - 4E_3 -4E_4 -4E_5$ which is a divisor in $\mathcal{D}_1\setminus \mathcal{D}_2$, because its stable base locus contains planes. 
\end{example}

\subsection{Cones of curves in blow ups of $\PP^n$}\label{section-strongduality-P^n}
We now recall from \cites{Payne, BDPS}
 the notion of $k$-moving curve.

 \begin{definition}\label{def Ck}
Given $0\le k\le n-1$, an irreducible curve $C\subset X$ is called {\em $k$-moving} if it belongs to an algebraic family of curves, whose  irreducible elements cover a Zariski open subset of an effective cycle of dimension (at least) $n-k$. Let 
  	$C_k$  be the cone generated by the classes of $k$-moving curves in  $\N_1(X)_\R$.
  	Let $\mathcal{C}_k=\overline C_k$ be its closure.
  \end{definition}
  
We have the following filtration
  \begin{equation}\label{filtration-cones-curves}
  	\overline{\NE(X)} = \mathcal{C}_{n-1} \supseteq \mathcal{C}_{n-2}\supseteq \cdots \supseteq \mathcal{C}_1\supseteq \mathcal{C}_0.
  \end{equation}
  where $\mathcal C_{n-1}=\overline{\NE(X)}$ is the Mori cone of curves, which is dual to $\Nef(X)$ with respect to the standard intersection pairing.
Moreover, the closed cone $\mathcal C_0$ is dual to the pseudoeffective cone, see \cite[Theorem 0.2]{BDPP}.

In general, it is not true that the cones of $k$-moving curves are dual to the cones of divisors  $\mathcal D_k$. 
 Anyway for the varieties $X^n_s$ which are Mori dream spaces, i.e. those listed in \eqref{list MDS}, we know from \cite[Theorem 1.2]{BDPS} that the strong duality 
 $$\mathcal{C}_k=\mathcal D_k^\vee$$ holds for any $k$.

Recall, from \cite{DM2} the following definitions.

\begin{definition}\label{(i) curves}
Let $X$ be a normal projective variety $X$.  A $(i)$-curve in $X$  is a smooth irreducible curve with normal bundle $\mathcal{O}_X(i)^{n-1}$, for $i\in\{-1,0,1\}$.
\end{definition}

\begin{definition}\label{(i) weyl lines}
In $X^n_s$ we call
\begin{enumerate}
\item  {\it $(-1)$-Weyl line} (class)  an element of the effective Weyl  orbit of $h-e_i-e_j$,
\item  {\it $(0)$-Weyl line} (class)  an element of the  Weyl  orbit of $h-e_i$ 
\item  {\it $(1)$-Weyl line} (class)  an element of the  Weyl  orbit of $h$.
\item  {\it exceptional Weyl line} (class)  an element in the  Weyl  orbit of $e_i$
\end{enumerate}
\end{definition}

Notice that if $n=2$, the notions of $(-1)$-Weyl lines and of exceptional Weyl lines coincide.
 Weyl $1$-planes (see Definition \ref{weyl plane}) coincide with  $(-1)$-Weyl lines. 
 In \cite{DM2}), it is proved that the $(i)$-Weyl lines are examples of $(i)$-curves in $X^n_s$  for $i\in\{-1,0,1\}$. 
Moreover,  we have that
the $(i)$-Weyl lines are $0$-moving curves for $i=0$ and $i=1$, the exceptional Weyl lines
are $1$-moving curves, and 
the $(-1)$-Weyl lines are in $\mathcal{C}_{n-1}\setminus\mathcal{C}_{n-2}$.

 We now describe the cone of $0$-moving curves $\mathcal{C}_{0}$.
 Notice that if $c$ is $0$-moving, then any element of its orbit is also $0$-moving, because the maps in the Weyl group $W^n_s$ are birational. The next lemma proves that also the property to be a generator or an extremal ray of $\mathcal C_0$ is preserved by the Weyl action.
\begin{lemma}
	If $c$ spans an extremal ray of the cone of movable curves $\mathcal{C}_{0}$ on $X^n_s$, then so does any element of the Weyl orbit 
 $W_s^n\cdot c$ .
\end{lemma}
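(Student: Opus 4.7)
My plan is to show that every Weyl transformation acts on $\N_1(X^n_s)_\R$ by a linear automorphism preserving the cone $\mathcal{C}_0$, after which the statement about extremal rays becomes a standard convex-geometry observation.

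First I would record the compatibility of the actions on divisors \eqref{cremona-rule} and on curves \eqref{Cremona curves} with the intersection pairing. Each standard Cremona $\Cr_\Gamma$ is an isomorphism in codimension one, and a direct expansion using the definitions of $b_\Gamma$ and $a_\Gamma$ gives
\[
\Cr_\Gamma(D)\cdot\Cr_\Gamma(c)=D\cdot c
\]
for every $D\in\N^1(X^n_s)_\R$ and $c\in\N_1(X^n_s)_\R$. Since the paper already records that the Weyl group preserves the pseudoeffective cone $\overline{\Eff}(X^n_s)$, and since $\mathcal{C}_0=\overline{\Eff}(X^n_s)^\vee$ by the theorem of Boucksom--Demailly--P\u{a}un--Peternell recalled in Section \ref{section-strongduality-P^n}, I would deduce that the induced curve action preserves $\mathcal{C}_0$: for $\phi\in W^n_s$, $c\in\mathcal{C}_0$ and any $D'\in\overline{\Eff}(X^n_s)$, writing $D'=\phi(D)$ with $D\in\overline{\Eff}(X^n_s)$ yields $D'\cdot\phi(c)=\phi(D)\cdot\phi(c)=D\cdot c\ge 0$, hence $\phi(c)\in\mathcal{C}_0$.

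The rest is then a one-step convex-geometry argument. If $c$ spans an extremal ray of $\mathcal{C}_0$ and $\phi(c)=a+b$ with $a,b\in\mathcal{C}_0$, then $c=\phi^{-1}(a)+\phi^{-1}(b)$, where both summands lie in $\mathcal{C}_0$ because $\phi^{-1}\in W^n_s$ preserves $\mathcal{C}_0$ by the same argument. Extremality of $c$ forces $\phi^{-1}(a)$ and $\phi^{-1}(b)$ to be nonnegative multiples of $c$; applying $\phi$ shows $a,b\in\R_{\ge 0}\phi(c)$, so $\phi(c)$ itself spans an extremal ray.

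The only delicate point I anticipate is verifying that the numerical formula \eqref{Cremona curves} is genuinely the adjoint of the divisor action with respect to the intersection pairing, at all classes (and not only on the effective ones where the geometric meaning is evident). Once this compatibility is in place, everything reduces to elementary properties of convex cones under linear automorphisms.
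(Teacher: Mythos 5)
Your proposal is correct and follows essentially the same route as the paper: both arguments rest on the fact that the Weyl action preserves $\overline{\Eff}(X^n_s)$ and hence its dual cone $\mathcal{C}_0$, and then conclude by applying the inverse transformation to a putative decomposition of $\phi(c)$. Your convexity step is in fact slightly cleaner, since you use the definition of an extremal ray directly rather than decomposing $\phi(c)$ into extremal generators as the paper does, and the adjoint compatibility of \eqref{cremona-rule} and \eqref{Cremona curves} with the intersection pairing that you flag is indeed a routine verification the paper also relies on elsewhere.
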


\begin{proof}
 Let $c$  be a curve spanning an extremal ray of $\mathcal{C}_0$. It is enough to prove that $\phi(c)$ is an extremal ray of $\mathcal{C}_{0}$ for a standard Cremona transformation $\phi$.
Assume by contradiction that  $\phi(c)$ does not span an extremal ray. Then there exist curves $\alpha_i$, that are extremal on $\mathcal{C}_0$, and real numbers $a_i$ so that 
$$\phi(c)=\sum_{i=1}^{t} a_i\cdot \alpha_i$$
Since all curves $\alpha_1, \ldots, \alpha_t$ are $0$-moving curves, then $\phi$ can not contract their classes. Applying the involution $\phi$ we obtain
$$c=\sum_{i=1}^{t} a_i\cdot\phi(\alpha_i)$$
Since the Weyl action preserves the effective cone of divisors, then it also preserves its dual, which  is $\mathcal{C}_0$. This implies that $\phi(\alpha_i)$ are $0$-moving curves and, since $a_i>0$, then $c$ is not extremal, which is a contradiction.
\end{proof}

\begin{remark}\label{rmk35}Obviously, the curve classes $h$ and $h-e_i$ span extremal rays of $\mathcal{C}_{0}$ because they are indecomposable as sums of $0$-moving curves.
Hence, by the previous lemma we have that the elements of the Weyl orbits $W^n_s\cdot h$ and $W^n_s\cdot (h-e_i)$ are extremal rays of $\mathcal C_0$ for any $X^n_s$.
 Moreover, 
if $X^n_s$ is a Mori dream space, we known that there are no other extremal rays, because, by \cite[Theorem 6.7]{DM2} for $X^n_{n+3}$,
and by \cite[Section 6.11]{CCF} for $X^4_8$,
the extremal rays of $\mathcal C_0$ 
are the $(0)$-Weyl lines and the $(1)$-Weyl lines. 
\end{remark}

\subsection{Joins} \label{Background}
We conclude this preliminary section recalling definitions and notation  introduced in \cite{BraDumPos3}.

Let $X=X_{s}^n$ be the
blow up of $\PP^n$ at $s\le n+3$ general points.

For any subset $I\subset\{1,\ldots,s\}$ let $L_I$ be the strict transform of the linear span of the points indexed by $I$. For $s=n+3$, let $C$ be the strict transform of the unique rational normal curve of degree $n$ passing through all points while $\sigma_t$ denotes the strict transform of its $t$-secant variety. We also set $\sigma_1:=C$.
Finally, for $t=0$ and $2\le |I|\le n$, or $t>0$ and $|I|\ge0$,
\begin{equation}
\label{join def}
J(L_I,\sigma_t)\end{equation}
denotes the strict transform of the join in $X_{s}^n$ of the linear span of the points indexed by $I$ and the $t$-secant variety of the rational normal curve. 
We have
$$\dim(J(L_I,\sigma_t))=2t+|I|-1.$$
In this notation, we use the following conventions:
$J(L_I,\sigma_t)=L_I$ if $t=0$, $J(L_I,\sigma_t)=\sigma_t$ if $|I|=0$, and  $J(L_I,\sigma_t)=\emptyset$ if $t=|I|=0$.

For an effective divisor of the form \eqref{divisor} 
the exact multiplicity of containment (see \cite[Lemma 4.1]{BraDumPos3}) of  the join $J(L_I,\sigma_t)$ in the base locus of $D$ is
$\max\{0,\kappa_{J(L_I,\sigma_t)}(D)\}$, where
\begin{equation} \label{k_J}
	\kappa_{J(L_I,\sigma_t)}(D) = - (|I| + (n+1)t-1)d + \sum_{i\in I} (t+1)m_i + \sum_{i \notin I} t m_i.
\end{equation}

\section{Weyl $r$-planes}\label{sec WP}

In this section we show that in $X_s^n$ for $n+1\le s \le n+3$, the set of Weyl $r$-planes coincides with the set including the exceptional divisors and the joins $J(L_I,\sigma_t)$, introduced in \eqref{join def}. 
In addition, we obtain a description of the cones of curves $\mathcal C_k$, for every $0\le k \le n-1$, in terms of effective Weyl orbits.

We recall here that in \cite[Theorem 5.5]{BDPS}, the authors proved that the cone $\mathcal D_k$  is dual to the cone of curves $\mathcal C_k$.
The proof is based on the following result, that we include here for the sake of completeness (see proof of \cite[Theorem 5.5]{BDPS}):
\begin{lemma}\label{who-sweeps-what}
For every integer $0\le t \le \frac{n}{2}$ and every index set $I\subset \{1, \ldots, s\}$ of length $0\le |I|\le n-2t$, the join $J(L_I,\sigma_t)$ is swept out by irreducible curves of class
\begin{equation} \label{c: join} c_{I,t}:=  (|I| + (n+1)t-1)h - \sum_{i\in I} (t+1)e_i - \sum_{i \notin I} t e_i. \end{equation}
\end{lemma}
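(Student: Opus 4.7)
The plan is to construct, for each admissible pair $(I,t)$, a positive-dimensional family of irreducible curves of class $c_{I,t}$ contained in $J(L_I,\sigma_t)$ and covering a dense open subset of the join. I proceed by induction on $t$.

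For the base case $t=0$, we have $J(L_I,\sigma_0)=L_I$, isomorphic to a blowup of $\mathbb{P}^{|I|-1}$, and $c_{I,0}=(|I|-1)h-\sum_{i\in I}e_i$ is the class of a rational normal curve of degree $|I|-1$ in $L_I$ through the $|I|$ blown-up points. Such curves form a positive-dimensional family (a rational normal curve of degree $r$ in $\mathbb{P}^r$ is determined by $r+3$ points, so imposing only $|I|$ passage conditions leaves positive-dimensional freedom), and through a general $q\in L_I$ there passes one such curve; hence they sweep $L_I$. The further base case $|I|=0,\,t=1$ is immediate: $c_{\emptyset,1}=nh-\sum_i e_i$ is the class of $C$ itself, which constitutes (and so sweeps) $\sigma_1=C$.

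For the inductive step $t\ge1$, a short computation from \eqref{c: join} yields the identity
\[
c_{I,t}\;=\;c_{I,t-1}+\left(nh-\sum_{i=1}^{n+3}e_i\right)+h.
\]
Given, by induction, an irreducible curve $\Gamma_{t-1}$ of class $c_{I,t-1}$ sweeping $J(L_I,\sigma_{t-1})$, and a general line $\ell\subset J(L_I,\sigma_t)=J(J(L_I,\sigma_{t-1}),C)$ meeting both $\Gamma_{t-1}$ and $C$, the reducible nodal cycle $\Gamma_{t-1}\cup C\cup\ell$ has class $c_{I,t}$ and is supported in $J(L_I,\sigma_t)$. By the standard smoothing result for rational nodal curves in a smooth projective variety (unobstructedness of node smoothing), this cycle deforms to an irreducible rational curve of class $c_{I,t}$ inside the join. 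A dimension count over the varying family of $(\Gamma_{t-1},\ell)$ together with the smoothing parameter then shows that the resulting family of smoothings sweeps $J(L_I,\sigma_t)$.

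The main obstacle is verifying that the smoothings remain inside $J(L_I,\sigma_t)$ rather than escaping into the ambient $\mathbb{P}^n$, and that the resulting family has dimension at least $\dim J(L_I,\sigma_t)-1$ so as to actually sweep the join. Both points reduce to a normal bundle computation for $\Gamma_{t-1}\cup C\cup\ell$ inside $J(L_I,\sigma_t)$, which is tractable thanks to the natural ruling/incidence structure of the join.
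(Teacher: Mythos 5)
Your base cases and the numerical identity $c_{I,t}=c_{I,t-1}+\bigl(nh-\sum_{i=1}^{n+3}e_i\bigr)+h$ are correct, and the configuration $\Gamma_{t-1}\cup C\cup\ell$ does lie in $J(L_I,\sigma_t)=J(J(L_I,\sigma_{t-1}),C)$ with the right class. But the proof has a genuine gap exactly where you flag "the main obstacle": the smoothing step is not a routine normal-bundle verification, and the result you invoke does not apply. The join $J(L_I,\sigma_t)$ is singular, and your nodal configuration sits inside its singular locus: $\Gamma_{t-1}\subset J(L_I,\sigma_{t-1})$, which is (part of) the singular locus of $J(L_I,\sigma_t)$, and for $t\ge2$ also $C=\sigma_1\subset\sigma_{t-1}\subseteq\mathrm{Sing}(\sigma_t)$. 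So "unobstructedness of node smoothing in a smooth projective variety" is simply unavailable; deformation theory of a curve lying in the singular locus of a singular ambient is exactly the hard point, not a tractable afterthought. The alternative of smoothing inside the smooth blow-up $X^n_{n+3}$ fares no better: the component $C$ is the unique rational normal curve through the $n+3$ points, hence rigid and non-free in $X$, so the standard chain/comb smoothing theorems do not apply either, and even if a smoothing in $X$ existed you would still have to force it back into the join. Finally, the dimension count showing the resulting family covers a dense open subset of the join is also only asserted. As it stands the inductive step proves nothing.

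For comparison: the paper does not reprove this lemma at all; it states it "for the sake of completeness" and points to the proof of Theorem 5.5 of \cite{BDPS}, where the sweeping curves are produced directly rather than by degeneration and smoothing. A workable direct route, consistent with how the lemma is used later (Proposition \ref{proposition curve orbit} and Theorem \ref{2 iff 3}), is to exhibit the curve through a general point of the join explicitly — e.g.\ as the image of a rational normal curve sweeping a linear space $L_{\mathcal J}$, $|\mathcal J|=|I|+2t$, under a suitable composition of standard Cremona transformations, which transforms $c_{\mathcal J,0}$ into $c_{I,t}$ and $L_{\mathcal J}$ birationally onto $J(L_I,\sigma_t)$. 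If you want to salvage the degeneration strategy you must actually carry out the deformation-theoretic analysis on (a resolution of) the join; in its current form the argument is incomplete.
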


Notice that when $t=0$, the irreducible curves of class $c_{I,0}=(|I|-1)h-\sum_{i\in I} e_i$ sweep out the $(|I|-1)$-plane spanned by the points indexed by $I$. 

In the following discussion we show that the curve classes in \eqref{c: join} form effective Weyl orbits of curves of type $c_{\mathcal I}$, for a suitable $\mathcal I \subset \{1, \ldots, s\}$. This will allow us to conclude that the joins are the only Weyl $r$-plane.

\begin{proposition}\label{proposition curve orbit} Let $X=X_{n+3}^n$. For a fixed $r\in \{1, \ldots, n-2\}$, the set
\[
\left \{   c_{I,t} : |I|+2t-1=r \right\}
\]
is an effective Weyl orbit in $A_1(X)$. 

For $r=n-1$, the set 
\[
\left \{   c_{I,t} : |I|+2t-1=n-1 \right\} \cup \{e_1, \ldots, e_s\}
\]	
is an effective Weyl orbit in $A_1(X)$. 
\end{proposition}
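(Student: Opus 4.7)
The plan is to compute directly the action of an arbitrary standard Cremona transformation $\Cr_\Gamma$ on the curve class $c_{I,t}$, classify the possible images into finitely many cases, and then use these moves to connect all the $c_{I,t}$ with a fixed value of $|I|+2t$ into a single effective orbit (together with the exceptional classes in the boundary case $r=n-1$). Since $s=n+3$, each such $\Gamma$ of size $n+1$ omits exactly two indices $\{p,q\}=\{1,\dots,n+3\}\setminus\Gamma$, and the combinatorial datum $|\{p,q\}\cap I|$ will drive the case analysis.

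First I would substitute $c_{I,t}$ into \eqref{Cremona curves} and verify
\begin{equation*}
 a_\Gamma(c_{I,t}) = |\Gamma\cap I|-|I|+1 = 1-|\{p,q\}\cap I| \in \{-1,0,1\}.
\end{equation*}
The three cases yield, by a short direct check, the following images:
\begin{itemize}
\item if $|\{p,q\}\cap I|=1$, then $a_\Gamma=0$ and $\Cr_\Gamma$ fixes $c_{I,t}$;
\item if $\{p,q\}\subset I$, then $a_\Gamma=-1$ and $\Cr_\Gamma(c_{I,t})=c_{I\setminus\{p,q\},\,t+1}$;
\item if $\{p,q\}\cap I=\emptyset$, then $a_\Gamma=1$, and $\Cr_\Gamma(c_{I,t})=c_{I\cup\{p,q\},\,t-1}$ whenever $t\ge 1$; when $t=0$ the image is $(|I|-n)h+\sum_{i\in\Gamma\setminus I}e_i$, which is effective if and only if $|I|=n$, in which case it is the exceptional class $e_{r_0}$, where $\{r_0\}=\Gamma\setminus I$.
\end{itemize}
In particular, every effective Cremona image of $c_{I,t}$ is again of the form $c_{I',t'}$ with $|I'|+2t'=|I|+2t$, except in the single boundary situation $(|I|,t)=(n,0)$, which occurs precisely when $r=n-1$ and sends $c_{I,0}$ to an exceptional class.

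The remaining step is combinatorial. For $1\le r\le n-2$ we have $|I|+2t=r+1\le n-1$, so the boundary case $(|I|,t)=(n,0)$ is never reached and the effective Weyl orbit stays inside $\{c_{I',t'}:|I'|+2t'-1=r\}$. For $r=n-1$ the transitions $c_{I,0}\leftrightarrow e_{r_0}$ with $|I|=n$ bring the exceptional classes into the orbit (conversely, $\Cr_\Gamma(e_i)=c_{\Gamma\setminus\{i\},0}$ for $i\in\Gamma$, while $\Cr_\Gamma$ fixes $e_i$ for $i\notin\Gamma$). Connectivity within the chain $|I|+2t=r+1$ follows from the fact that at each endpoint an effective move is available (type $-1$ at the top, where $|I|=r+1\ge 2$, and type $+1$ at the bottom, where $t\ge 1$), and interior pairs admit both; a $-1$-move followed by a $+1$-move with carefully chosen $\{p,q\}$ realises the substitution of a single element of $I$ by one of $I^c$, so for each fixed cardinality of $I$ all index-subsets are reached (equivalently, the symmetric group $S_{n+3}$ lies inside $W^n_{n+3}$). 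The main obstacle is the three-case Cremona bookkeeping and the careful isolation of the single boundary case $(|I|,t)=(n,0)$ where the orbit escapes the family $\{c_{I,t}\}$; once this is done, the orbit structure is dictated by the elementary combinatorics of the chain $|I|+2t=r+1$.
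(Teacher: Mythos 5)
Your proof is correct and follows essentially the same route as the paper's: a three-way case analysis on $|\Gamma^c\cap I|$ showing that each Cremona move sends $c_{I,t}$ to $c_{I',t'}$ with $|I'|+2t'$ preserved (or to a non-effective class, or to $e_{r_0}$ in the boundary case $|I|=n$, $t=0$), followed by a connectivity argument within the chain. Your write-up is in fact slightly more explicit than the paper's at two points — the identification of the exceptional class as the effective image in the boundary case, and the two-move index-substitution establishing transitivity — but the underlying argument is the same.
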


\begin{proof}
	It is easy to compute the orbit of the curves $c_{I,t}$ under the Weyl group action.
It is proved in \cite{DM2} that the Coxeter group for  Mori dream spaces of type $X^n_s$ is finite, and therefore the orbits of the Weyl group on curve classes are finite. We claim that $c_{I,t}$ are all the elements of the effective Weyl orbit $(W^n_s\cdot c_{\mathcal I,0})^+$
 of the curve 
\begin{equation} \label{Cremona zero}
 c_\mathcal I= 
 (|\mathcal I|-1)h-\sum_{i\in \mathcal I} e_i,
 \end{equation} 
where $\mathcal I \subset \{1, \ldots, n+3\}$ is {any} set  
of length $|\mathcal I|= r +1$.  
i.e.
\begin{equation}\label{proof: eff weyl orbit} 
(W^n_s\cdot c_{\mathcal I})^+= \left \{   c_{I,t} : |I|+2t-1=r \right\}.
\end{equation}

Set $S:=\{1,\dots,n+3\}$. To show the claim, we consider the curve class $c_{I,t}$ and a set $\Gamma\subset S$ with   $|\Gamma|=n+1$ and we analyse all possible images of  $c_{I,t}$ via the standard Cremona transformation ${\rm Cr}_{\Gamma}$.
Since the number of points is $s=n+3$, we need to consider three distinct cases:

Case 1. $I\subset \Gamma\subset S$, 
so that $S=\Gamma \cup \{i,j\}$, with $j\ne i$. Set $J:=I\cup \{i,j\}$. Then, applying a standard Cremona transformation based on the points of the set $\Gamma$, we obtain the following curve:
$${\rm Cr}_{\Gamma}(c_{I,t})=c_{J,t-1}.$$
Case 2. Let $I^c$ denote the complement of the set $I$ in $S$, and let  $\Gamma$ satisfy $I^c\subset \Gamma\subset S$, so that $S=\Gamma \cup \{i,j\}$.  Set $J:=I\backslash \{i,j\}$. It is easy to check that applying a standard Cremona transformation based on the points of the set $\Gamma$, one obtains:
$${\rm Cr}_{\Gamma}(c_{I,t})=c_{J,t+1}.$$

Case 3. Finally, if  $\Gamma\subset S$ and $\Gamma$ contains neither the set $I$ nor its complement $I^c$, then:
$${\rm Cr}_{\Gamma}(c_{I,t})=c_{I,t}.$$

Analysing the three previous cases, we see that after applying a Cremona transformation to a given curve class $c_{I,t}$, the parameter $t$ might increase, stay the same or decrease by 1, while the dimension of the join that the irreducible curves with that class span remains invariant. Furthermore, by applying a suitable sequence of standard Cremona transformations, varying $\Gamma$ as appropriate, one proves that the set on the right-hand side of equation \eqref{proof: eff weyl orbit} is contained in the set on the left-hand one.

We prove the other inclusion, and hence conclude the proof, by showing that $t=0$ and $t=\lfloor n/2\rfloor$ are the minimum and maximum possible values of $t$, respectively, in the effective Weyl orbit \eqref{proof: eff weyl orbit}.

Indeed if $t=0$, and we choose $\Gamma$ such that $I\subset \Gamma$ as in Case 1 above,
the image of $c_{I,0}$ under the Cremona transformation $\Cr_\Gamma$ is non-effective. 
Thus, $t=0$ is the minimum value in the effective Weyl orbit \eqref{proof: eff weyl orbit}. On the opposite side, if $t=\lfloor n/2\rfloor$, we have that the index set $I$ must be such that 
\[
|I|= i+1-2\lfloor n/2\rfloor\le (n-1)+1-(n-1)=1.
\]
This implies that $I^c$ in $\{1,\ldots,n+3\}$ is of length $|I|\ge n+2$ and we cannot find $\Gamma$ as in Case 2 above. Thus, any Cremona transformation of $c_{I,t=\lfloor n/2 \rfloor}$ will never increase the value of $t$.

 \end{proof}

\begin{remark} \label{remark curve orbit} 
    In $X_s^n$ for $n+1\le s \le n+2$, the situation is much simpler. In this case, we must have $t=0$ and the joins of dimension $r$ are just the $r$-planes. Moreover, for $r\in\{1,\ldots, n-2\}$  we have that each curve class $c_{I,0}$ is alone 
    in its effective Weyl orbit, with $|I|=r+1$. In the case of $r=n-1$ with $|I|=n$ we have two cases:
    \begin{itemize}
        \item If $s=n+1$, then for each $j\in \{1, \ldots, n+1\}$ we have an effective Weyl orbit $\{e_j, c_{\widehat{I_j},0}\}$, where $\widehat{I_j}:=\{1,\ldots,n+1\}\setminus\{j\}$.
        \item If $s=n+2$, then we have a unique effective Weyl orbit $\{e_1, \ldots, e_{n+2}\}\cup\{c_{I,0} : |I|=n\}$.
    \end{itemize}
\end{remark}

\begin{theorem} \label{2 iff 3}
 Let $n+1\le s \le n+3$ and $X=X_s^n$. For any $1\le r\le n-2$, the Weyl $r$-planes of $X$ are all and only the joins of dimension $r$. The Weyl $(n-1)$-planes of $X$ are all and only the Weyl divisors.
\end{theorem}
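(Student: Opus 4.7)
The plan is to transfer the Weyl orbit description of curve classes from Proposition \ref{proposition curve orbit} to the underlying $r$-cycles, using the sweeping correspondence of Lemma \ref{who-sweeps-what}. The crucial observation is that a join $J(L_I, \sigma_t)$ is uniquely recovered from its sweeping curve class $c_{I,t}$ as the closure of the one-parameter family of irreducible curves of that class; thus the Weyl action on joins of dimension $r$ is faithfully mirrored by the Weyl action on the set of classes $\{c_{I',t}\colon |I'|+2t-1=r\}$ already analysed in the proof of Proposition \ref{proposition curve orbit}.

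For $1\le r\le n-2$, I would begin with the strict transform $L_I$, $|I|=r+1$, of a linear $r$-plane, which by Lemma \ref{who-sweeps-what} is swept by curves of class $c_{I,0}$, and then follow its effective Weyl orbit. Imitating the three cases in the proof of Proposition \ref{proposition curve orbit} and using \eqref{cremona-rule}, I would verify on the level of cycles that $\Cr_\Gamma$ acts as follows: if $I\subset \Gamma$ with $S\setminus \Gamma = \{i,j\}$, then $\Cr_\Gamma$ takes $J(L_I,\sigma_t)$ to $J(L_{I\cup\{i,j\}},\sigma_{t-1})$; if $I^c \subset \Gamma$ with $\Gamma^c = \{i,j\}$, it takes $J(L_I,\sigma_t)$ to $J(L_{I\setminus\{i,j\}},\sigma_{t+1})$; otherwise the join is preserved. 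Inducting on the length of the Cremona sequence and combining with Proposition \ref{proposition curve orbit}, both inclusions Weyl $r$-plane $\subseteq$ joins of dimension $r$ and joins of dimension $r$ $\subseteq$ Weyl $r$-plane then follow.

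For $r=n-1$ the argument is parallel, but now one uses the enlarged orbit of Proposition \ref{proposition curve orbit} which also contains the exceptional classes $e_1,\ldots,e_s$. A direct calculation from \eqref{cremona-rule} shows that when $\Gamma = I\cup\{j\}$ with $|I|=n$, one has $\Cr_\Gamma(L_I)= E_j$, exactly matching the curve-class transformation $c_{I,0}\mapsto e_j$. Consequently the effective Weyl orbit of $L_I$ coincides with the Weyl orbit of an exceptional divisor, giving the identification of Weyl $(n-1)$-planes with Weyl divisors. For the border cases $s=n+1,n+2$, where no rational normal curve through $n+3$ points is available, Remark \ref{remark curve orbit} collapses the Weyl orbits to the expected finite sets and reduces the statement to the tautology that $r$-dimensional joins are just the linear $r$-planes $L_I$.

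The main obstacle is the geometric verification that Cremonas take joins to joins precisely as predicted by the curve-class orbit. Although this is intuitive, a clean proof requires either a case-by-case chart-level computation of $\Cr_\Gamma$ restricted to $J(L_I,\sigma_t)$ in each of the three scenarios above, or, more abstractly, a verification that the sweeping map of Lemma \ref{who-sweeps-what} is a bijection onto its image and commutes with the Cremona action on both cycles and curves --- a fact that follows from the birationality of standard Cremona transformations, since they preserve one-parameter families of irreducible curves away from their indeterminacy loci.
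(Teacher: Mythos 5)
Your proposal is correct and follows essentially the same route as the paper: both arguments combine Lemma \ref{who-sweeps-what} with Proposition \ref{proposition curve orbit} (and Remark \ref{remark curve orbit}), using the fact that a Cremona transformation carries the family of curves sweeping out $L_{\mathcal I}$ to the family sweeping out its image, so that the image is identified as the join (or exceptional divisor) determined by the transformed curve class. The explicit cycle-level case analysis you sketch is not needed in the paper, which relies directly on the abstract "sweeping commutes with Cremona" argument you mention as the alternative.
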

\begin{proof} Let $1\le r \le n-1$. By definition, a Weyl $r$-plane is an element in the effective Weyl orbit of some $r$-plane $L_{\mathcal I}$, with $\mathcal I \subset \{1, \ldots, s\}$ a subset of length $|\mathcal I|=r+1$. We know, from Lemma \ref{who-sweeps-what}, that $L_{\mathcal I}$ is swept out by irreducible curves of class $c_{\mathcal I,0}$.  Let $\phi$ be any Weyl transformation, by Proposition \ref{proposition curve orbit} and Remark \ref{remark curve orbit} we know that $\phi(c_{\mathcal I,0})=c_{I,t}$ for some appropriate $I$ and $t$ with $|I|+2t-1=|\mathcal I|-1=r$, or $\phi(c_{\mathcal I,0})=e_i$ for some $i$, if $r=n-1$. Hence, $\phi(L_{\mathcal I})$ is swept out by curves of class $c_{I,t}$ or, possibly, by curves of class $e_i$ if $r=n-1$. By Lemma \ref{who-sweeps-what}, in the first case we obtain that $\phi(L_{\mathcal I})=J(L_I,\sigma_t)$, a join of dimension $|I|+2t-1=r$; in the second case we obtain that $\phi(L_{\mathcal I})=E_i$, an exceptional divisor. This concludes the proof.
\end{proof}

A further consequence of Proposition \ref{proposition curve orbit} is the fact that the cones of curves $\mathcal{C}_k$ are preserved by the effective Weyl action, {as described in the following theorem}.
For the description of $\mathcal C_0$  see Remark \ref{rmk35}.

\begin{theorem}[Cones of curves $\mathcal C_k$ in $X^n_{n+3}$]\label{strong-duality-thm-n+3}
	The cones of $k$-moving curves $\mathcal C_k$ in $X^n_{n+3}$ are generated by three effective Weyl orbits. More precisely:
	\begin{enumerate}[(a)]
		\item  the extremal rays of $\mathcal C_1$  are spanned by the elements of the Weyl orbits of $h-e_i$ and $e_i$;
		\item
		if $k\ge 2$, {the} generators of
		$\mathcal C_k$  are the elements of the effective Weyl orbits of
		$$h-e_i,\ e_i,\ (n-k)h- \sum_{|I|=n-k+1}e_i.$$ 
			\end{enumerate}
\end{theorem}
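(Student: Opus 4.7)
The strategy is to use the strong duality $\mathcal C_k=\mathcal D_k^\vee$ from \cite[Theorem 1.2]{BDPS} together with the base-locus multiplicity formula \eqref{k_J}, which identifies $\mathcal D_k$ with the set of effective divisors $D$ satisfying $D\cdot c_{I,t}\ge 0$ for every $(I,t)$ with $|I|+2t-1\ge n-k$. I would establish the two containments $\mathcal C'\subseteq\mathcal C_k$ and $\mathcal C_k\subseteq\mathcal C'$, where $\mathcal C'$ denotes the cone generated by the three Weyl orbits in the statement.

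For $\mathcal C'\subseteq\mathcal C_k$, I would appeal to Proposition \ref{proposition curve orbit} and Lemma \ref{who-sweeps-what}. The orbit of $h-e_i$ consists of $(0)$-Weyl lines, each $0$-moving, so it lies in $\mathcal C_0\subseteq\mathcal C_k$. The orbit of $e_i$ equals $\{e_1,\dots,e_{n+3}\}\cup\{c_{I,t}:|I|+2t-1=n-1\}$, each of whose elements is $1$-moving, so it lies in $\mathcal C_1\subseteq\mathcal C_k$. Finally, the orbit of $(n-k)h-\sum_{|I|=n-k+1}e_i$ equals $\{c_{I,t}:|I|+2t-1=n-k\}$, whose elements are $k$-moving by Lemma \ref{who-sweeps-what}.

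For the reverse inclusion, I would again invoke strong duality: it suffices to show that every divisor $D$ with $D\cdot c\ge 0$ for all $c$ in the three orbits lies in $\mathcal D_k$. For the effectiveness of $D$, I would use that $\mathcal C_0$ is generated by the Weyl orbits of $h$ and $h-e_i$ (Remark \ref{rmk35}) and that by linearity $\phi(h)=\phi(h-e_i)+\phi(e_i)$ for any Weyl transformation $\phi$; hence the orbit of $h$ is contained in the cone generated by the orbits of $h-e_i$ and $e_i$, so the hypothesis on $D$ implies non-negative pairing with all generators of $\mathcal C_0$ and consequently $D\in\overline{\Eff}$. The remaining task is a combinatorial decomposition lemma: every $c_{I,t}$ with $r:=|I|+2t-1>n-k$ must be written as a non-negative rational combination of elements of the three orbits. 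The proof would proceed by induction on $r$ using \eqref{c: join}, partitioning the index set $I$ (and splitting $\sigma_t$ when $t>0$) into subjoins of dimension exactly $n-k$, together with correction terms of types $h-e_i$ and $e_i$; a typical instance is the identity $3h-e_1-e_2-e_3-e_4=(2h-e_1-e_2-e_3)+(h-e_4)$ in $X^4_7$ for $k=2$.

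The principal obstacle is this decomposition lemma: verifying it uniformly for every $(I,t)$ with $|I|+2t-1>n-k$ and every $k\ge 2$ requires careful bookkeeping, although the Cremona case analysis in the proof of Proposition \ref{proposition curve orbit} provides much of the required scaffolding. For part (a), the extremality of the rays in $\mathcal C_1$ spanned by the orbits of $h-e_i$ and $e_i$ would follow by observing that each such orbit element dualizes a facet of $\mathcal D_1=\overline{\Mov}(X^n_{n+3})$, and is therefore indecomposable as a positive combination of other classes in $\mathcal C_1$.
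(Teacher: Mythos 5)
Your proposal is correct in outline and rests on the same two pillars as the paper's proof, namely the strong duality $\mathcal C_k=\mathcal D_k^\vee$ and the orbit computation of Proposition \ref{proposition curve orbit}; but it takes a more self-contained route through the dualization. The paper simply reads off the finite list of inequalities cutting out $\mathcal D_k$ from \cite[Proposition 5.2]{BDPS} (the classes $h$, $h-e_i$, $nh-\sum e_i+e_j$, $e_i$ and the $c_{I,t}$ with $|I|+2t-1=n-k$) and sorts them into the three orbits. You instead start from the larger system $D\cdot c_{I,t}\ge 0$ over \emph{all} joins of dimension $\ge n-k$ and prove the redundancy of the higher-dimensional ones by a decomposition lemma. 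That lemma is not an obstacle: it is exactly Lemma \ref{curve dec} of this paper, which shows that for joins $W\subset V$ one has $c_V=c_W+(\text{sum of }(0)\text{-Weyl lines})$ via a flag $V=W_0\supset W_1\supset\cdots\supset W_a=W$ dropping the dimension by one at each step; choosing $W$ of dimension exactly $n-k$ inside any join $V$ of larger dimension gives the required non-negative decomposition uniformly in $(I,t)$ and $k$. Your reduction of the orbit of $h$ to the orbits of $h-e_i$ and $e_i$ via $\phi(h)=\phi(h-e_i)+\phi(e_i)$ is also fine, since for $X^n_{n+3}$ (a Mori dream space) all elements of these orbits are effective. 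The one point you should make explicit is that formula \eqref{k_J} alone does not ``identify'' $\mathcal D_k$ with the set of effective divisors meeting all $c_{I,t}$ of dimension $\ge n-k$ non-negatively: it gives only one inclusion. The converse requires knowing that the only possible components of the stable base locus in the relevant dimensions are joins, which is precisely the content of \cite{BraDumPos3} and \cite[Proposition 5.2]{BDPS}. So your argument ultimately leans on the same external input as the paper's, but in exchange it makes the passage from that classification to the generators of $\mathcal C_k$ explicit rather than quoted.
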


\begin{proof}

	The strong duality theorem for $X^n_{n+3}$, \cite[Theorem 5.5]{BDPS}, implies that the inequalities cutting out the cones of divisors $\mathcal{D}_k$ 
 give the rays of the cones of curves $\mathcal{C}_k$. By 	Proposition 5.2 of \cite{BDPS}, these are of four types:
	\begin{itemize}
		\item[(\RN 1)] $h$ and $h-e_i$, for every $i=1, \ldots, n+3$;
		\item[(\RN 2)] $nh-\sum_{i=1}^{n+3}e_i + e_j$, for every $j=1, \ldots, n+3$;
		\item[(\RN 3$_k$)] $ c_{I,t}=(|I| + (n+1)t-1)h - \sum_{i\in I} (t+1)e_i - \sum_{i \notin I} t e_i$,  for every $0\leq t \le n/2$ and $|I|=n-2t-k+1$;
		\item[(\RN 4)] $e_i$, for every $i=1, \ldots, n+3$.
	\end{itemize}

First notice that the curve classes in  (\RN 2) are in the Weyl orbit of $h-e_i$ for any $i\neq j$.

	Statement (a) follows since the  curve classes of type (\RN 3$_{k=1}$) correspond to those curve classes $c_{I,t}$ such that $|I|+2t-1=n-1$. By Proposition \ref{proposition curve orbit} they are all in the Weyl orbit of $e_i$ for any $i\in \{1,\ldots, n+3\}$.

	Similarly, statement  (b) is also a consequence of Proposition \ref{proposition curve orbit}. In this case, curve classes of type (\RN 3$_k$) form an effective Weyl orbit, having any curve class of the form $(n-k)h-\sum_{|I|=n-k+1} e_i$ in the orbit.
	\end{proof}

Notice that if $s\geq n+2$ then the curve classes $h-e_i$ and $h-e_j$ belong to the same Weyl orbit.
In fact if $\Delta:=\{i_i,\dots,i_{n}\}$ is a set of distinct indices not including $i$ and $j$, using \eqref{Cremona curves}, we see that $h-e_i$ is sent to $nh-\sum_{k\in\Delta\cup \{i,j\}}e_k$ by the Cremona transformation $\textrm{Cr}_{\Delta\cup\{j\}}$, 
which, in turn, is sent to $h-e_j$ by $\textrm{Cr}_{\Delta\cup\{i\}}$.
Therefore all $(0)-$Weyl lines lie in the same Weyl orbit. 
The same observation applies to the curve classes $e_i$ and $e_j$, if $s\geq n+2$.
Indeed, $e_i$ is sent to the  class
$(n-1)-\sum_{k\in\Delta\cup\{i\}}e_k$ by $\textrm{Cr}_{\Delta\cup\{i\}}$, 
which, in turn, is sent to $e_j$ by $\textrm{Cr}_{\Delta\cup\{j\}}$, where $\Delta$ in an index set as above.

If $s=n+2$ a Weyl group element will either stabilize the curve class $(n-k)h- \sum_{|I|=n-k+1}e_i$ or it will contract it. If $s=n+3$ any such curve class will belong to the same Weyl group orbit.  We obtain the following remark.

\begin{remark} 
We can make the statement of Theorem \ref{strong-duality-thm-n+3} more explicit by counting in how many orbits the generators of the cone $\mathcal C_k$ of $X^n_s$ fit, as shown in  Table \ref{table}.
\begin{table}[H]\label{table}
     \centering
    {\renewcommand{\arraystretch}{1.5}
     \begin{tabular}{|c|c|c|} 
        \cline{2-3}
        \multicolumn{1}{c|}{}
          &  $\boldsymbol{k\ge 2}$&  $\boldsymbol{k=1}$\\ \hline  
         $\boldsymbol{s=n+1}$&  $2n+2+\binom{n+1}{n-k+1}$& $2n+2$\\ \hline  
         $\boldsymbol{s=n+2}$&  $2+ \binom{n+2}{n-k+1}$& $2$\\ \hline  
         $\boldsymbol{s=n+3}$&  $3$& $2$\\ \hline 
     \end{tabular}}
     \vspace{5mm}
     \caption{Number of Weyl orbits whose elements generate $\mathcal C_k$ in $X_s^n$.}
     \label{tab:my_label}
 \end{table}
\end{remark}

 \section{Weyl cycles of dimension $r$} \label{sec WC}

In this section we characterize the Weyl cycles of dimension $r$ (see Definition \ref{Weyl-cycles})
 in $X_s^n$ for $n+1\le s \le n+3$.
 Theorem \ref{Weyl is join} shows that they are all and only the joins $J(L_I,\sigma_t)$ \eqref{join def} together with the exceptional divisors.

The Chow group $A_r(X_{s}^n)$ of $k$-dimensional  cycles on $X_{s}^n$ is generated by the class of a general $r$--plane $H_r$ and $r$-planes $E_{r,1}, \ldots, E_{r,s}$ contained in the exceptional divisors $E_1, \ldots, E_{s}$, respectively. Thus, any $r$--cycle class $V$ can be written as the following linear combination
$$\delta H_r - \sum_{i=1}^{s} \mu_i E_{r,i}.$$ 
In this context, we say that $V$ has $H_r$--degree $\delta$ and $E_{r,i}$--degree $\mu_i$, for every $1\le i \le s$.

\begin{remark} \label{rmk: joins}
A join 
$V=J(L_I,\sigma_t)$ is a subvariety of dimension $r=|I|+2t-1$.
It has:
\begin{itemize}
\item $H_{r}$--degree $\binom{t+n-r}{t}$,
\item $E_{r,i}$--degree $\binom{t+n-r}{t}$, for every $i\in I$,
\item $E_{r,i}$--degree $\binom{t+n-r-1}{t-1}$, for every $i\in \{1, \ldots, s\}\setminus I$.
\end{itemize}
Moreover it is swept out by the curve class
\[
c_V = (|I| + (n+1)t-1)h - \sum_{i\in I} (t+1)e_i - \sum_{i \notin I} t e_i,
\]
as proved in Lemma \ref{who-sweeps-what}. We also recall that for $n+1\le s \le n+2$, joins $J(L_I,\sigma_t)$ are just $r$-planes with $|I|=r+1$ and $t=0$ in this notation. Here we are using the convention $\binom{a}{0}=1$ for any $a>0$, and $\binom{a}{b}=0$ if $b<0$. 
\end{remark} 

\begin{lemma} \label{curve dec}
Let  $V$ and $W$ be two joins in $X^{n}_{s}$ for $n+1\le s \le n+3$, and let $c_V$, $c_W$ be the corresponding curve classes. If $W\subset V$, then $c_V=c_W+c_{W,V}$ where $c_{W,V}$ is a sum of $(0)$-Weyl lines, in particular $c_{W,V}$ is $0$-moving.

\end{lemma}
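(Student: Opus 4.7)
The plan is a direct computation from the explicit formulas for $c_V$ and $c_W$ given in Remark \ref{rmk: joins}, followed by identification of each summand as a $(0)$-Weyl line. Write $V = J(L_I,\sigma_t)$ and $W = J(L_{I'},\sigma_{t'})$; the containment $W \subset V$ forces $I' \subseteq I$ and $t' \le t$.

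Subtracting the two curve classes and splitting the exceptional contributions according to whether an index lies in $I'$, in $I\setminus I'$, or outside $I$, the coefficient of $e_i$ becomes $-(t-t')$ in the first and third cases and $-(t-t')-1$ in the middle case. After regrouping one obtains the clean identity
$$c_V - c_W \;=\; (t-t')\,F \;+\; \sum_{i\in I\setminus I'}(h-e_i),$$
where $F = (n+1)h - \sum_{i=1}^{s} e_i$ is the anticanonical curve \eqref{anticanonical curve}. Each summand $h-e_i$ is a $(0)$-Weyl line by Definition \ref{(i) weyl lines}(2).

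It remains to decompose $F$ itself as a sum of $(0)$-Weyl lines. This only needs to be done when $s = n+3$, because for $n+1 \le s \le n+2$ the secant variety $\sigma_t$ is not defined for $t\ge 1$ and one automatically has $t=t'=0$, so $F$ does not appear. For $s = n+3$, applying formula \eqref{Cremona curves} with $\Gamma = \{1,\dots,n+1\}$ to $h-e_{n+2}$ gives
$$\Cr_\Gamma(h-e_{n+2}) \;=\; nh - \sum_{j=1}^{n+1}e_j - e_{n+2},$$
an element of the Weyl orbit of $h-e_{n+2}$, hence a $(0)$-Weyl line; adding $h-e_{n+3}$ recovers exactly $F$. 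Therefore $F$ is a sum of two $(0)$-Weyl lines, and the class
$$c_{W,V} \;:=\; (t-t')\,F + \sum_{i\in I\setminus I'}(h-e_i)$$
is a non-negative integer combination of $(0)$-Weyl lines. Since $(0)$-Weyl lines are $0$-moving (as noted in the preliminaries following Definition \ref{(i) weyl lines}) and $\mathcal C_0$ is closed under non-negative combinations, $c_{W,V} \in \mathcal C_0$.

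I do not anticipate any real obstacle: the only delicate point is the bookkeeping of the subtraction, where the three positions of an index relative to the pair $(I',I)$ must be tracked correctly in order to see the anticanonical class $F$ emerge with coefficient $t-t'$ and the residual $\sum(h-e_i)$ isolate the linear part of the decomposition.
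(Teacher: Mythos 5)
Your reduction to the identity $c_V-c_W=(t-t')F+\sum_{i\in I\setminus I'}(h-e_i)$ rests on the claim that $W\subset V$ forces $I'\subseteq I$ and $t'\le t$, and that claim is false. For $s=n+3$ the join $J(L_{I\cup\{i\}},\sigma_{t-1})$ with $i\notin I$ is contained in $J(L_I,\sigma_t)$ (the point $p_i$ lies on the rational normal curve, so the join of $p_i$ with $\sigma_{t-1}$ sits inside $\sigma_t$); this is exactly the second of the two codimension-one containments that the paper's proof enumerates, and it has $I'=I\cup\{i\}\not\subseteq I$. For a general pair $W=J(L_{I'},\sigma_{t'})\subset V=J(L_I,\sigma_t)$ the same subtraction yields
\[
c_V-c_W=(t-t')F+\sum_{i\in I\setminus I'}(h-e_i)-\sum_{i\in I'\setminus I}(h-e_i),
\]
so your displayed formula is missing the negative terms indexed by $I'\setminus I$. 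The argument can be rescued along your lines: since $F-(h-e_i)=nh-\sum_{j\ne i}e_j$ is itself a $(0)$-Weyl line, the negative terms can be absorbed into $(t-t')F$ provided $|I'\setminus I|\le t-t'$. But that inequality is precisely what must be proved, and proving it requires knowing that any containment of joins refines to a chain of codimension-one steps, each of which either deletes an index from the linear part (keeping $t$) or adjoins a new index while dropping $t$ by one, so that at most $t-t'$ new indices can appear. That is, in substance, the flag-and-induction argument of the paper's proof; without it your computation does not close.

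Two secondary points are fine as written: your decomposition $F=(h-e_{n+3})+\bigl(nh-\sum_{j\ne n+3}e_j\bigr)$ and the Cremona computation showing $nh-\sum_{j\ne i}e_j$ is a $(0)$-Weyl line are both correct, and the case $n+1\le s\le n+2$, where $t=t'=0$ and $F$ never appears, is handled correctly. The sole, but genuine, gap is the mistaken classification of which joins are contained in which.
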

\begin{proof}
We first prove the statement when $\dim W= \dim V-1$. In this case, if $V=J(L_I,\sigma_t)$ then
\begin{enumerate}
\item either $W=J(L_{I\setminus \{i\}},\sigma_t)$, for some $i\in I$, in which case
		$$ c_V = c_W + (h-e_i),$$
so, $c_{W,V}=h-e_i$ is the $(0)$-Weyl line;
\item or, if $s=n+3$,  $W=J(L_{I\cup \{i\}}, \sigma_{t-1} )$, for some $i \in \{1, \ldots, n+3\}\setminus I$, in which case
		$$ c_V= c_W + \left( nh - \sum_{j=1, j\neq i}^{n+3} e_j \right)$$
so, $c_{W,V} = nh - \sum_{j=1, j\neq i}^{n+3} e_j $ is the $(0)$-Weyl line.
\end{enumerate}

Assume now that $\dim W= \dim V - a$ for some $a \geq 2$ and consider the flag 
\[
V= W_0 \supset W_1 \supset \ldots \supset W_a = W,
\]
where each $W_i$ is a join and $\dim W_i -1 = \dim W_{i+1}$. By the previous discussion, for every $i\in \{0, \ldots, a-1\}$ there exists a $0$-moving curve $c_{W_{i+1},W_i}$ such that $c_{W_i} = c_{W_{i+1}} + c_{W_{i+1},W_i}$. Hence, applying this recursively we conclude that
\[
c_V = c_W + \left( \sum_{i=1}^a c_{W_i, W_{i-1}} \right),
\]
with $c_{W,V}= \sum_{i=1}^a c_{W_i, W_{i-1}}$ being a sum of $(0)$-Weyl lines and, in particular, it is $0$-moving since it is a sum of $0$-moving curves.
\end{proof}

\begin{definition} \label{orthogonal join}
Given an effective divisor $D$ on $X_{s}^n$, we say that a join $V=J(L_I,\sigma_t)$ is orthogonal to $D$ if $D\cdot c_V = 0$ or, equivalently, if $\kappa_V(D)=0$, with $\kappa_V(D)$ as in \eqref{k_J}.
\end{definition}

\begin{lemma} \label{k_J(D)}
Let $D=E_{\mathcal I,\sigma_\tau}:=(\tau+1)H-\sum_{i\in \mathcal I} (\tau+1)E_i - \sum_{i\notin \mathcal I} \tau E_i$ be a Weyl divisor in $X_{s}^n$ and $V:=J(L_I,\sigma_t)$ a join, then
\[
\kappa_V(D) = \tau-t+1-|I\setminus \mathcal I|.
\]
In particular, V is orthogonal to D if and only if
\[ 
|I\setminus \mathcal I| = \tau+1-t.
\]
\end{lemma}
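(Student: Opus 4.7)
The plan is to plug the explicit coefficients of the Weyl divisor $D=E_{\mathcal I,\sigma_\tau}$ into formula \eqref{k_J} and then collapse the resulting expression using the quadratic identity $\langle D,D\rangle=-1$ that characterizes Weyl divisors.

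First I would substitute $d=\tau+1$, $m_i=\tau+1$ for $i\in\mathcal I$ and $m_i=\tau$ for $i\notin\mathcal I$ into \eqref{k_J}, splitting each of the two sums according to whether the summation index lies in $\mathcal I$. Setting $m:=|\mathcal I|$ and $b:=|I\setminus\mathcal I|$, and using the identities $|I\cap\mathcal I|=|I|-b$, $|\mathcal I\setminus I|=m-|I|+b$ and $|\{1,\dots,s\}\setminus(I\cup\mathcal I)|=s-m-b$, a routine collection of terms yields the intermediate identity
\[
\kappa_V(D)=(\tau+1)-b+t\bigl[(s-n-1)\tau+m-n-1\bigr].
\]

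The key step is then to show that the bracket reduces to $-1$, i.e.\ that every $D=E_{\mathcal I,\sigma_\tau}$ which is a Weyl divisor satisfies $(s-n-1)\tau+m=n$. Expanding $\langle D,D\rangle=(n-1-m)(\tau+1)^2-(s-m)\tau^2=-1$ yields a relation linear in $m$; for $s=n+3$ it produces the one-parameter family $m=n-2\tau=n-(s-n-1)\tau$ for every integer $\tau$, while for $s\le n+2$ the only integer solutions are $(\tau,m)=(0,n)$, corresponding to strict transforms of hyperplanes through $n$ points, and $(\tau,m)=(-1,s-1)$, corresponding to exceptional divisors; in each of these cases the relation $(s-n-1)\tau+m=n$ is verified directly. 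Substituting this back into the intermediate identity gives
\[
\kappa_V(D)=(\tau+1)-b-t=\tau-t+1-|I\setminus\mathcal I|,
\]
which is the claimed formula; the orthogonality statement follows at once by setting $\kappa_V(D)=0$ (cf.\ Definition \ref{orthogonal join}).

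The step I expect to require the most bookkeeping is the derivation of the intermediate identity, since one has to track the four-way partition of $\{1,\dots,s\}$ induced by $I$ and $\mathcal I$ and arrange the cancellations so that the coefficient of $t$ acquires the clean form $(s-n-1)\tau+m-n-1$ before the Weyl condition is invoked.
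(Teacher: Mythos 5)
Your proposal is correct and follows essentially the same route as the paper: substitute the coefficients of $E_{\mathcal I,\sigma_\tau}$ into \eqref{k_J}, split the sums over the partition of $\{1,\dots,s\}$ induced by $I$ and $\mathcal I$, and use a linear relation among $|\mathcal I|$, $\tau$, $n$, $s$ to collapse the coefficient of $t$ to $-1$. The only difference is how that relation is obtained: the paper gets $|\mathcal I|=n-2\tau$ in one line from the fact that the join $J(L_{\mathcal I},\sigma_\tau)$ is a divisor, so $|\mathcal I|+2\tau-1=n-1$, whereas you extract the equivalent identity $(s-n-1)\tau+|\mathcal I|=n$ from $\langle D,D\rangle=-1$ via a short case analysis on $s$ --- both are valid, the paper's being the more direct.
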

\begin{proof}
Notice that, since $D$ is a divisor, it must be $|\mathcal I| + 2\tau -1 = n-1$. Thus, one can compute
\begin{align*}
\kappa_V(D) 	=& \; - D\cdot c_V  \\
               =& \; -(\tau+1)(|I|+(n+1)t-1)+(\tau+1)(t+1)|\mathcal I\cap I|+ (\tau+1)t|\mathcal I\setminus I| \\
              & + \tau(t+1)|I\setminus \mathcal I| + \tau t (n+3-|\mathcal I \cup I| )\\
        	= & \; \tau-t+1-|I\setminus \mathcal I|.
\end{align*}
The second statement follows from Definition \ref{orthogonal join}
\end{proof}

\begin{proposition} \label{thm: int ort}
Let $D=E_{\mathcal I,\sigma_\tau}$ be a Weyl divisor on $X_s^n$ with $n+1\le s \le n+3$ and $V=J(L_I,\sigma_t)$ a join orthogonal to $D$, if $t=0$ then
\[
D\cap V= \left(\bigcup_{i\in I\setminus \mathcal I} J(L_{I\setminus \{ i \}}, \sigma_t)\right),
\]
while, if $t\ge1$, we have 
\[
D\cap V= \left(\bigcup_{i\in I\setminus \mathcal I} J(L_{I\setminus \{ i \}}, \sigma_t)\right) \cup \left(\bigcup_{i\in \mathcal I\setminus I} J(L_{I\cup \{i \}}, \sigma_{t-1})\right).
\]
\end{proposition}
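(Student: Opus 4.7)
The plan is to prove the set-theoretic equality by double inclusion, with the orthogonality relation from Lemma \ref{k_J(D)} (namely $|I\setminus\mathcal I|=\tau+1-t$) driving the combinatorics. Both sides are pure of dimension $r-1$: each join on the right satisfies $|I\setminus\{i\}|+2t-1=r-1$ or $|I\cup\{i\}|+2(t-1)-1=r-1$; and because $\kappa_V(D)=0$ the variety $V$ is not a component of the base locus of $|D|$, so (as $D$ is irreducible) $V\not\subseteq D$ and $D\cap V$ has the expected pure dimension $r-1$.

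For the inclusion $(\supseteq)$ one verifies, for each candidate $W$, both $W\subseteq V$ and $W\subseteq D$. Containment in $V=J(L_I,\sigma_t)$ is geometric: $J(L_{I\setminus\{i\}},\sigma_t)\subseteq V$ is automatic; and for $J(L_{I\cup\{i\}},\sigma_{t-1})\subseteq V$ (when $t\ge 1$ and $i\in \mathcal I\setminus I$) one uses that $p_i$ lies on the rational normal curve through the $s\le n+3$ points, so $J(L_{\{i\}},\sigma_{t-1})\subseteq \sigma_t$ and joining with $L_I$ preserves the containment. Containment in $D$ follows from the exact multiplicity formula applied to Lemma \ref{k_J(D)}: substituting the orthogonality relation in the formula for $W$ in place of $V$ yields $\kappa_W(D)=1$ in every case---namely $\tau-t+1-(\tau-t)=1$ for the first family (removing $i\in I\setminus\mathcal I$ from $I$ shrinks $I\setminus\mathcal I$ by one) and $\tau-(t-1)+1-(\tau+1-t)=1$ for the second (adding $i\in\mathcal I\setminus I$ leaves $I\setminus\mathcal I$ unchanged while increasing $\tau-t+1$ by one). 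Hence each $W$ is a component of the base locus of $|D|$ with multiplicity one, and since $D$ is an irreducible effective divisor this forces $W\subseteq D$ set-theoretically.

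For the reverse inclusion $(\subseteq)$ one must show that $D\cap V$ has no further $(r-1)$-dimensional components. My preferred approach is to restrict to $V$: the Cartier divisor $D|_V$ has a known class in $V$, its multiplicity along each $W$ is pinned down by the $\kappa$-computation above, and it remains to show that the corresponding linear system on $V$ is zero-dimensional, forcing $D|_V$ to equal the sum of the $W$'s. An alternative is a moving-curve argument: the vanishing $D\cdot c_V=0$ implies that each irreducible sweeping curve of $V$ either misses $D$ or lies entirely inside it, so the components of $D\cap V$ are swept by specific degenerations of $c_V$, and a case analysis paralleling Cases $1$--$3$ in the proof of Proposition \ref{proposition curve orbit} yields precisely the two announced families.

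The main obstacle is making this reverse inclusion rigorous without a lengthy enumeration. A promising simplification is to reduce to $t=0$ via a Weyl transformation taking $V$ to a linear $r$-plane $L_{I'}$ (which is available by Proposition \ref{proposition curve orbit}, at least at the level of the sweeping curve class): then $D|_{L_{I'}}$ is a hypersurface in $\mathbb P^{r}$ whose degree and multiplicities at the blown-up points match exactly those of the union $\bigcup_{i\in I'\setminus\mathcal I'}L_{I'\setminus\{i\}}$, and a standard expected-dimension count---in the style of \cite{BraDumPos3}---shows this union is the unique hypersurface with those numerical invariants, yielding the claim.
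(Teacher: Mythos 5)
Your forward inclusion $(\supseteq)$ is essentially the paper's: the candidate joins are visibly inside $V$ (your geometric justification for $J(L_{I\cup\{i\}},\sigma_{t-1})\subseteq V$ via $p_i\in C$ is correct), and the computation $\kappa_W(D)=1$ from Lemma \ref{k_J(D)} together with the rigidity of the Weyl divisor $D$ gives $W\subseteq D$. That half is fine.

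The reverse inclusion is where the proof is genuinely incomplete, and you say so yourself. None of the three strategies you sketch is carried out, and each has a concrete problem. (i) The ``restrict to $V$'' route: showing $|D|_V|$ is zero-dimensional does not by itself identify $D|_V$ with $\sum_W \kappa_W(D)\,W$ --- you would still need to check that this sum has the same class as $D|_V$, which is precisely the computation you are trying to avoid; moreover $V$ is a (generally singular) join, so ``Cartier divisor class on $V$'' and ``linear system on $V$'' need justification. (ii) The moving-curve route shows that $D\cap V$ is a union of curves of the family of class $c_V$ contained in $D$, but it does not bound the components of $D\cap V$: Cases 1--3 of Proposition \ref{proposition curve orbit} classify Cremona images of curve classes, not degenerations of curves inside a fixed family, so the claimed ``case analysis'' is not available as stated. (iii) The reduction to $t=0$ by a Weyl transformation fails because Cremona maps are only birational: they do not transport the scheme $D\cap V$ biregularly, and you would have to control what happens along the indeterminacy locus. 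The paper closes the gap differently and more directly: since $V\not\subseteq D$, the intersection $D\cap V$ is a purely $(r-1)$-dimensional effective cycle whose class $[D]\cdot[V]$ in $A_{r-1}(X^n_s)$ can be written out explicitly in the basis $H_{r-1},E_{r-1,1},\dots,E_{r-1,s}$; one then computes (using Remark \ref{rmk: joins}, the orthogonality relation $|I\setminus\mathcal I|=\tau+1-t$, and the identity $\binom{t+n-r+1}{t}=\binom{t+n-r}{t}+\binom{t+n-r}{t-1}$) that the union of the announced joins, with the multiplicities found in your $\kappa$-computation, has exactly the same class. Containment of effective cycles plus equality of classes then forces equality. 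If you want to complete your write-up, this degree-matching computation is the missing ingredient; I recommend adding it rather than pursuing any of your three sketches.
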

\begin{proof}
Since $V$ is orthogonal to $D$ by hypothesis, $V$ is not a subvariety of $D$. Thus, if $V$ has dimension $r$, the intersection of $D$ and $V$ is a union of dimension $r-1$ subvarieties. The theorem specifies which are these varieties: the ones on the right hand side of the equation. Indeed they are of dimension $r-1$ and they are clearly contained in $V$. Moreover, using Lemma \ref{k_J(D)} and by the orthogonality of $V$ and $D$, we have that
\begin{itemize}
\item For $i\in I\setminus \mathcal I$,
\begin{align*}
\kappa_{ J(L_{I\setminus \{ i \}}, \sigma_t)}(D) &=\tau-t+1-|(I\setminus \{ i \}) \setminus \mathcal I| \\
								& = \tau-t+1-(|I\setminus \mathcal I|-1)=\kappa_V(D)+1 = 1.
\end{align*}
\item For $i\in \mathcal I\setminus I$, and $t\ge 1$
\begin{align*}
\kappa_{ J(L_{I\cup \{ i \}}, \sigma_{t-1})}(D) &=\tau-(t-1)+1-|(I\cup \{ i \}) \setminus \mathcal I| \\
								& = \tau-t+1+1-|I\setminus \mathcal I|=\kappa_V(D)+1 = 1.
\end{align*}
\end{itemize}
This proves that each join on the right hand side is contained in $D$ once. Hence, we have proven the containment "$\supseteq$". 

Notice that $D\cap V$ is a $r-1$--cycle with 
\begin{enumerate}[(a)]
\item $(\tau+1)\binom{t+n-r}{t}$ as $H_{r-1}$--degree,
\item $(\tau+1)\binom{t+n-r}{t}$ as $E_{r-1, j}$--degree for every $j \in \mathcal I\cap I$,
\item \label{1 t=0} $(\tau+1)\binom{t+n-r-1}{t-1}$ as $E_{r-1, j}$--degree for every $j \in \mathcal  I\setminus I$,
\item $\tau\binom{t+n-r}{t}$ as $E_{r-1, j}$--degree for every $j \in I\setminus \mathcal I$,
\item \label{2 t=0} $\tau\binom{t+n-r-1}{t-1}$ as $E_{r-1, j}$--degree for every $j \notin \mathcal I\cup I$.
\end{enumerate}
Observe that, for $t=0$ the $E_r,j$--degree is 0 in cases \ref{1 t=0} and \ref{2 t=0}.

To conclude the proof we show that the right hand side is an $r-1$--cycle with the same class. We do so for a general $t$, the case $t=0$ follows the same computations but only considering the first union in the right hand side. Thus, it is left to the reader.

We start by checking the $H_{r-1}$--degree. Following Remark \ref{rmk: joins}, for every $i\in I\setminus \mathcal I$ each join $ J(L_{I\setminus \{ i \}}, \sigma_t)$ has $H_{r-1}$--degree $\binom{t+n-r+1}{t}$, while for every $i \in \mathcal I\setminus I$ the joins $ J(L_{I\cup \{ i \}}, \sigma_{t-1})$ have $H_{r-1}$--degree $\binom{t+n-r}{t-1}$. Thus, their union has $H_{r-1}$--degree

{\small
\begin{equation} \label{eq: degree joins}
\sum_{i\in I\setminus \mathcal I} \binom{t+n-r+1}{t} + \sum_{i \in \mathcal I\setminus I} \binom{t+n-r}{t-1} = |I\setminus \mathcal I|\binom{t+n-r+1}{t} + |\mathcal I\setminus I|\binom{t+n-r}{t-1}.
\end{equation}
}

Since $D=J(L_{\mathcal I}, \sigma_\tau)$ and $V=J(L_I,\sigma_t)$ are varieties of dimension $n-1$ and $r$, respectively. We have that $|\mathcal I|=n-2\tau$ and $|I|=r-2t+1$. Moreover, since $V$ is orthogonal to $D$, by Lemma \ref{k_J(D)} we have that $|I\setminus \mathcal I|=\tau+1-t$.  Therefore,
\[
|\mathcal I\cap I| = |I| - |I\setminus \mathcal I| = r-2t+1-(\tau+1-t) = r-t-\tau,
\]
and
\[
|\mathcal I\setminus I|= |\mathcal I|-|\mathcal I\cap I|= n-2\tau - (r-t-\tau) = n+t-r-\tau.
\]
Thus,
\[
|I\setminus \mathcal I| + |\mathcal I\setminus I| = \tau +1-t +n+t-r-\tau=n-r+1.
\]
This, together with the usual binomial identity $\binom{t+n-r+1}{t}=\binom{t+n-r}{t} + \binom{t+n-r}{t-1}$, we have that equation \ref{eq: degree joins} is equal to
\[
|I\setminus \mathcal I|\binom{t+n-r}{t} + \left(|I\setminus \mathcal I| + |\mathcal I\setminus I|\right)\binom{t+n-r}{t-1} 
\]
\[
= (\tau+1-t)\binom{t+n-r}{t} + (n-r+1)\binom{t+n-r}{t-1} 
\]
\[
=  (\tau+1-t) \binom{t+n-r}{t} + t \binom{t+n-r}{t} 
\]
\[
=(\tau+1-t + t)\binom{t+n-r}{t} = (\tau+1)\binom{t+n-r}{t}.
\]

As for the $E_{r-1,j}$--degrees, we have the following cases:
\begin{enumerate}[(a)]
\item If $j \in \mathcal I\cap I$, we have that each join $ J(L_{I\setminus \{ i \}}, \sigma_t)$ has $E_{r-1,j}$--degree $\binom{t+n-r+1}{t}$  for every $i\in I\setminus \mathcal I$, and the joins $ J(L_{I\cup \{ i \}}, \sigma_{t-1})$ have $E_{r-1,j}$--degree $\binom{t+n-r}{t-1}$  for every $i \in \mathcal I\setminus I$.  Following the exact same calculation as before, we conclude that the right hand side has $E_{r-1,j}$--degree $(\tau+1)\binom{t+n-r}{t}$.

\item If $j \in \mathcal I\setminus I$ then, the right hand side has $E_{r-1,j}$--degree
\[
\binom{t+n-r}{t-1} + \sum_{i\in I\setminus \mathcal I} \binom{t+n-r}{t-1} + \sum_{i \in \mathcal I\setminus I, i\neq j} \binom{t+n-r-1}{t-2},
\]
and a similar calculation shows that this adds up to $(\tau+1)\binom{t+n-r-1}{t-1}$.
\item If $j \in I\setminus \mathcal I$ then, the right hand side has $E_{r-1,j}$--degree
\[
\binom{t+n-r}{t-1} + \sum_{i\in I\setminus \mathcal I, i \neq j} \binom{t+n-r+1}{t} + \sum_{i \in \mathcal I\setminus I} \binom{t+n-r}{t-1},
\]
which adds up to $\tau\binom{t+n-r}{t}$.
\item If $j \notin \mathcal I\cup I$ then, the right hand side has $E_{r-1,j}$--degree
\[
 \sum_{i\in I\setminus \mathcal I} \binom{t+n-r}{t-1} + \sum_{i \in \mathcal I\setminus I} \binom{t+n-r-1}{t-2},
\]
which adds up to $\tau\binom{t+n-r-1}{t-1}$.
\end{enumerate}
Since the right hand side is contained in the left hand side and both have the same class, we conclude that they are equal.
\end{proof}

\begin{proposition} \label{joins are weyl cycles}
    Let $V=J(L_I,\sigma_t)$ be a join of dimension $r$ in $X_s^n$, for $n+1\le s \le n+3$. Then $V$ is a Weyl $r$-cycle.
\end{proposition}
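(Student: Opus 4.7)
The plan is to first realize a single $r$-plane $L_J$ as an irreducible component of the intersection of $n-r$ pairwise orthogonal Weyl divisors, and then transport this description to any join via the effective Weyl group action.

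By \Cref{2 iff 3}, every join $V=J(L_I,\sigma_t)$ of dimension $r$ is a Weyl $r$-plane, so there exist $J\subset\{1,\ldots,s\}$ with $|J|=r+1$ and a composition $\phi$ of standard Cremona transformations, non-contracting on intermediate images, with $\phi(L_J)=V$. Since standard Cremona transformations send Weyl divisors to Weyl divisors (by \eqref{cremona-rule}) and preserve the Dolgachev--Mukai pairing \eqref{DM}, it would suffice to exhibit $L_J$ as an irreducible component of the intersection of $n-r$ pairwise orthogonal Weyl divisors: applying $\phi$ then realizes $V=\phi(L_J)$ as an irreducible component of the intersection of their images, which remain pairwise orthogonal Weyl divisors.

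For the realization, relabel so that $J=\{1,\ldots,r+1\}$ and consider Weyl hyperplanes $D_{\mathcal{I}} := H - \sum_{i\in\mathcal{I}}E_i$ with $|\mathcal{I}|=n$; these are Weyl divisors since $\Cr_\Gamma(E_j)=H-\sum_{i\in\Gamma\setminus\{j\}}E_i$ by \eqref{cremona-rule}. One has $L_J\subseteq D_{\mathcal{I}}$ iff $J\subseteq\mathcal{I}$, equivalently iff the complement $A:=\{1,\ldots,s\}\setminus\mathcal{I}$ lies in $\{r+2,\ldots,s\}$; moreover $\langle D_{\mathcal{I}_1},D_{\mathcal{I}_2}\rangle = (n-1)-|\mathcal{I}_1\cap\mathcal{I}_2|$, so orthogonality amounts to $|A_1\triangle A_2|=2$. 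A ``star'' construction handles all three values $s\in\{n+1,n+2,n+3\}$ uniformly: fix a core $A_0\subset\{r+2,\ldots,s\}$ of size $s-n-1$ and set $A_k:=A_0\cup\{k\}$ for each $k\in\{r+2,\ldots,s\}\setminus A_0$, producing exactly $n-r$ pairwise orthogonal Weyl hyperplanes, each containing $L_J$.

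Finally, the intersection $\bigcap_k D_{\mathcal{I}_k}$ must be analysed on $X$. In $\PP^n$ the underlying $n-r$ hyperplanes impose independent linear conditions, and since $\bigcap_k\mathcal{I}_k=J$, their common intersection is the unique $r$-plane through $J$, namely $L_J$. On $X$, any further irreducible component of $\bigcap_k D_{\mathcal{I}_k}$ must lie in some exceptional divisor $E_j$ with $j\in J$; since each $D_{\mathcal{I}_k}$ restricts to the hyperplane class on $E_j\cong\PP^{n-1}$, the $n-r$ restrictions cut out a subvariety of dimension at most $r-1$ there. Therefore $L_J$ is the unique $r$-dimensional component, hence an irreducible component, of $\bigcap_k D_{\mathcal{I}_k}$. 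The hard part will be precisely this excess-intersection analysis on the exceptional divisors: verifying that each restriction $D_{\mathcal{I}_k}|_{E_j}$ is indeed a hyperplane of $E_j$ and that the $n-r$ restrictions remain linearly independent, so that no excess $r$-dimensional component arises above the blown-up points of $J$.
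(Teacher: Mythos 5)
Your argument is essentially correct but takes a genuinely different route from the paper's. You reduce everything to the linear case: you cut out $L_J$ by $n-r$ pairwise orthogonal Weyl \emph{hyperplanes} $H-\sum_{i\in\mathcal I}E_i$ (the ``star'' construction is fine, and so is the excess-intersection check: on $E_j$, $j\in J$, the $n-r$ restricted hyperplanes are the projectivized tangent spaces of the $\Pi_k$ at $p_j$, which meet in the $(r-1)$-dimensional trace of $L_J$, so no extra component appears), and you then transport the configuration by the Weyl element $\phi$ with $\phi(L_J)=V$ supplied by \cref{2 iff 3}. The paper never moves anything: it works directly with $V=J(L_I,\sigma_t)$ and the explicit nonlinear Weyl divisors $E_{\mathcal I_j,\sigma_t}$ of degree $t+1$, checks orthogonality via \cref{k_J(D)}, and runs a downward induction on $r$ using \cref{thm: int ort}, which identifies $D\cap V$ as an explicit union of joins by a computation in $A_{r-1}(X)$. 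The paper's route buys that intersection formula, which is reused for the converse in \cref{Weyl is join}; your route keeps all the geometry among honest hyperplanes of $\PP^n$.

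The one step you assert rather than prove is the transport: ``applying $\phi$ realizes $V=\phi(L_J)$ as an irreducible component of the intersection of the images.'' A composition of standard Cremona transformations is only a pseudo-isomorphism of $X^n_s$, with indeterminacy along strict transforms of linear spans of base points, so the image of an intersection is not automatically the intersection of the images, and components can a priori be created or absorbed across the exceptional loci. The gap is fixable: $V\subseteq\bigcap_k\phi(D_{\mathcal I_k})$ follows by taking closures over the open locus $U$ where $\phi$ is an isomorphism; every component of an intersection of $n-r$ divisors has dimension at least $r$; and a component $Z\supsetneq V$ of dimension $>r$ would meet $\phi(U)$ (because $V$ does) and pull back to an irreducible subvariety of $\bigcap_k D_{\mathcal I_k}$ of dimension $>r$ containing $L_J$, contradicting $\bigcap_k D_{\mathcal I_k}=L_J$. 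You also implicitly use that each class $\phi(D_{\mathcal I_k})$ is represented by an irreducible effective divisor equal to the strict transform of $D_{\mathcal I_k}$; this holds here because the Weyl orbit of $E_i$ on these Mori dream spaces consists of extremal effective classes with $h^0=1$, but it should be stated. With these points supplied, your proof is complete.
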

\begin{proof}
    Without loss of generality we may assume that $I=\{1,\ldots, a\}$ for some $a\in\{0,\ldots,n\}$ such that $\dim V=|I|+2t-1=a+2t-1=r$; where $a=0$ means that $I$ is the empty set.

    To prove the statement we will show that $V$ is an irreducible component of 
    \[
    D_{a+1}\cap \ldots \cap D_{n+1-2t},
    \]
    where $D_j=E_{\mathcal I_j,\sigma_t}$ (as defined in Lemma \ref{k_J(D)}) with $\mathcal I_j=\{1, \ldots, n+1-2t\}\setminus \{j\}$. Indeed, notice that they are pairwise orthogonal divisors since for every $j_1\neq j_2$
\[
\langle D_{j_1}, D_{j_2} \rangle = \kappa_{D_{j_1}}(D_{j_2}) = t-t+1-|\mathcal I_{j_1}\setminus \mathcal I_{j_2}| = 1 - |\{j_2\}| =0 .  
\]
    
    If $r=n-1$, then $V=D_{n+1-2t}$ and there is nothing to prove. 
    
    By induction, assume the statement to be true for joins of dimension $r+1$, and let $V=J(L_I,\sigma_t)$ be of dimension $r\le n-2$. 
    
    Consider $V'=J(L_{I\cup\{a+1\}}, \sigma_t)$, which is a join of dimension $r+1$. By the induction hypothesis we know that $V'$ is an irreducible component of
    \[
    D_{a+2}\cap \ldots \cap D_{n+1-2t}.
    \]
    Following Lemma \ref{k_J(D)}, observe that
    \[
    \kappa_{V'}(D_{a+1})=t-t+1-\left|\{1,\ldots,a+1\}\setminus \mathcal I_{a+1} \right|=1-|\{a+1\}|=0,
    \]
    which means that $V'$ is orthogonal to $D_{a+1}$. By Proposition \ref{thm: int ort} it follows that $V$ is an irreducible component of $D_{a+1}\cap V'$. Hence,  $V$ is an irreducible component of
    \[
    D_{a+1}\cap D_{a+2} \cap \ldots \cap D_{n+1-2t}.
    \]

\end{proof}

In particular the next theorem answers affirmatively a question posed in \cite[Conjecture 2]{BDP-Ciro}.
\begin{theorem}\label{Weyl is join}
Weyl $r$-cycles in $X_s^n$ are all and only the joins of dimension $r$, for $n+1\le s\le n+3$. 
\end{theorem}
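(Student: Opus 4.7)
The forward direction (joins of dimension $r$ are Weyl $r$-cycles) is Proposition \ref{joins are weyl cycles}. For the converse I would argue by induction on the number $m$ of pairwise orthogonal Weyl divisors $D_1, \ldots, D_m$ whose intersection has $W$ as an irreducible component, showing that every such $W$ of dimension $r$ is a join (or an exceptional divisor when $r = n-1$).

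The base case $m = 1$ is immediate: $W$ coincides with the Weyl divisor $D_1$, which is either an exceptional divisor or a hyperplane join of dimension $n - 1$ by Theorem \ref{2 iff 3}. For the inductive step with $m \geq 2$, I first observe that the collection may be assumed to contain no exceptional divisor $E_k$: otherwise every other Weyl divisor $D_l$ orthogonal to $E_k$ would be either another exceptional $E_l$ with $l \neq k$ or a hyperplane $J(L_{\mathcal I_l}, \sigma_0)$ with $k \notin \mathcal I_l$, and both types are set-theoretically disjoint from $E_k$, so the full intersection would be empty. Hence each $D_j = J(L_{\mathcal I_j}, \sigma_{\tau_j})$ is a hyperplane join. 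Let $W'$ be the irreducible component of $\bigcap_{j < m} D_j$ containing $W$; by the inductive hypothesis $W' = J(L_I, \sigma_t)$ is a join. Either $W' = W$, in which case we are done, or $W' \supsetneq W$; in the latter case, maximality of $W$ in $\bigcap_{j \le m} D_j$ forces $W' \not\subseteq D_m$, and $W$ is an irreducible component of $W' \cap D_m$. The plan is to establish $W' \perp D_m$ in the sense of Definition \ref{orthogonal join}, so that Proposition \ref{thm: int ort} applies and identifies $W$ as a join of dimension $\dim W' - 1 = r$.

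The key orthogonality $\kappa_{W'}(D_m) = 0$ can be derived combinatorially from Lemma \ref{k_J(D)}. The containment $W' \subseteq D_j$ for $j < m$ is equivalent to $\kappa_{W'}(D_j) \ge 1$, which reads $|I \setminus \mathcal I_j| \le \tau_j - t$. The pairwise orthogonality $D_j \perp D_m$ translates to $|\mathcal I_j \setminus \mathcal I_m| = \tau_m - \tau_j + 1$. The set-theoretic inclusion $(I \setminus \mathcal I_m) \subseteq (I \setminus \mathcal I_j) \cup (\mathcal I_j \setminus \mathcal I_m)$ then yields $|I \setminus \mathcal I_m| \le \tau_m - t + 1$, that is, $\kappa_{W'}(D_m) \ge 0$. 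On the other hand, $W' \not\subseteq D_m$ combined with the rigidity of Weyl divisors (which span extremal rays of the pseudoeffective cone by Proposition \ref{eff face}, so that their base loci coincide with the divisors themselves) gives $\kappa_{W'}(D_m) \le 0$, and equality follows. I expect the main technical delicacy to be this last combinatorial bookkeeping—tracking the indices $\mathcal I_j$ and secant parameters $\tau_j$ across divisors of potentially different types—rather than any conceptual leap; the rest of the argument essentially runs the proof of Proposition \ref{joins are weyl cycles} in reverse.
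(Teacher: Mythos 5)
Your argument is correct and follows the same inductive skeleton as the paper's proof (induction on the number of pairwise orthogonal Weyl divisors, with Proposition \ref{thm: int ort} doing the final identification), but you establish the crucial inequality $\kappa_{W'}(D_m)\ge 0$ by a genuinely different route. The paper invokes Lemma \ref{curve dec} to write $c_{D_1}=c_{V_i}+c_{V_i,D_1}$ with $c_{V_i,D_1}$ a sum of $(0)$-Weyl lines, and then combines $c_{D_1}\cdot D_l=0$ with the non-negativity of $0$-moving curves against effective divisors to get $c_{V_i}\cdot D_l\le 0$, i.e.\ $\kappa_{V_i}(D_l)\ge 0$; the dichotomy between strict inequality and equality then yields either containment (a harmless case, since $V_i\cap D_l=V_i$ is already a join) or orthogonality. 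You obtain the same inequality by pure index bookkeeping from Lemma \ref{k_J(D)}, via the inclusion $(I\setminus\mathcal I_m)\subseteq(I\setminus\mathcal I_j)\cup(\mathcal I_j\setminus\mathcal I_m)$ together with $|I\setminus\mathcal I_j|\le\tau_j-t$ and $|\mathcal I_j\setminus\mathcal I_m|=\tau_m-\tau_j+1$, and you exclude the containment case up front using the rigidity of Weyl divisors (so that $W'\not\subseteq D_m$ forces $\kappa_{W'}(D_m)\le 0$). Both arguments are sound; the paper's is more conceptual and proves the slightly stronger statement that the entire intersection $D_1\cap\cdots\cap D_l$ is a union of joins, while yours is more elementary and has the merit of explicitly disposing of the case where one of the divisors is an exceptional divisor --- a case in which the paper's appeal to Lemma \ref{curve dec} (which requires $D_1$ to be a join) does not literally apply, though it is vacuous for the same disjointness reason you give.
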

\begin{proof}
By  Proposition \ref{joins are weyl cycles} we know that joins of dimension $r$ are Weyl $r$-cycles. We prove that if $D_1, \ldots, D_l$ is a list of pairwise orthogonal Weyl divisors, their intersection is a union of joins.

We use induction on $l$. 
If $l=1$ there is nothing to prove. The case $l=2$ is a particular case of Proposition \ref{thm: int ort} when $V$ is itself a divisor. Assume the statement to be true for $l-1$, then
\[
D_1 \cap \ldots \cap D_l = \left(D_1 \cap \ldots \cap D_{l-1}\right) \cap D_l = \left(\bigcup_{i=1}^m V_i\right) \cap D_l = \bigcup_{i=1}^m(V_i \cap D_l),
\]
where $V_1, \ldots, V_m$ are the joins whose union is the intersection $D_1 \cap \ldots \cap D_{l-1}$. We conclude by showing that $V_i \cap D_l$ is a union of joins for every $i\in \{1, \ldots, m\}$.

On one hand, since $V_i$ is a join contained in $D_1$, by Lemma \ref{curve dec} there exists a $0$-moving curve $c_{V_i, D_1}$ such that $c_{D_1}=c_{V_i} + c_{V_i, D_1}$. On the other hand, by the orthogonality of $D_1$ and $D_l$, we have that
\[
0= c_{D_1}\cdot D_l = c_{V_i}\cdot D_l + c_{V_i, D_1}\cdot D_l.
\]
We know that $c_{V_i, D_1}\cdot D_l\ge 0$, since $c_{V_i, D_1}$ is $0$-moving and $D_l$ is an effective divisor. Hence, it must be that $c_{V_i}\cdot D_l\le 0$. If  $c_{V_i}\cdot D_l< 0$, then $V_i$ is contained in $D_l$ and, therefore $V_i \cap D_l = V_i$, which is a join. Otherwise,  $c_{V_i}\cdot D_l = 0$, which means that $V_i$ is a join orthogonal to $D_l$ and by Proposition \ref{thm: int ort} we know that their intersection is a union of joins.
\end{proof}

\section{On Gale duality and Weyl actions}
For the sake of completeness, we review in this section the birational geometry of the spaces $X^4_8$ and $X^3_7$. In particular we describe the cones of $k$-moving curves in Theorems \ref{strong-duality-thm-X^4_8} and \ref{strong-duality-thm-X^3_7}.
\label{gale section}

\subsection{Blow up of $\PP^4$ in $8$ points}
The birational geometry of $X^4_8$ was described by Mukai \cite{Mukai05}  and by Casagrande, Codogni and Fanelli \cite{CCF}, by means of the Gale duality between $X:=X^4_8$ and the Del Pezzo surface $S:=X^2_8$, a projective plane blown up in eight general points. 
 Further properties are given in  \cite{Xie}, where in particular the role of the anticanonical curve is analyzed.

Gale duality, see \cite{Dolgachev-Ortland, Eisenbud-Popescu}, arises from a bijection between sets of $8$ general
points in $\PP^2$, $\{p_1,\ldots, p_8\}$, and in $\PP^4$, $\{q_1,\ldots q_8\}$  up to projective equivalence. 
We will dedicate this section to unify the approaches of \cite{CCF,Mukai05} and the Weyl group actions on curve classes of \cite{DM2}. In particular we show in Proposition \ref{prop} that the Weyl actions on $S$ and $X$ are compatible with the Gale duality.

Let $\{h, e_1,\ldots, e_8\}$ denote a basis for $\N_1(X)_\mathbb R$,
and let 
$\{\alpha, \beta_1, \dots \beta_8\}\in H^2(S, \mathbb{R})$ be a basis for $\Pic(S)$, where $\alpha$ is a general line class and $\beta_i$ is the class of the exceptional divisor 
$E_i$ and set
$\beta:=\sum_{i=1}^8 \beta_i$. 

Mukai proves that $X$  is isomorphic to the moduli space $Y$ of rank two torsion free sheaves with first Chern class $-K_S$ and second Chern class $2$, 
semistable with respect to $-K_S + 2{\alpha}$.
Between the spaces $X$ and $S$  
there exists the following isomorphism of real vector spaces,
\cite[Proposition 5.8]{CCF},
$$\rho:  \N_1(X)_\mathbb R \to H^2(S, \mathbb{R})$$
such that $\rho(h)=\beta-\alpha$ and  $\rho(e_i)=-2\beta_i+\beta-\alpha.$ In particular, its inverse
\begin{equation}\label{eq}
	\rho^{-1}:H^2(S, \mathbb{R}) \to \N_1(X)_\mathbb R
\end{equation} 
is determined by $\rho^{-1}(\alpha)=3h-\frac12\sum_{i=1}^8 e_i$ and $\rho^{-1}(\beta_i)=\frac12(h-e_i)$. Furthermore,
$$\rho^{-1}(K_S)=F,$$ 
where $F:=5h-\sum_{i=1}^{8}e_i$ is the anticanonical curve class, see
Definition \ref{DEF anticanonical curve}.

Let $W_S:=W^2_8$ denote the Weyl group on $S$ and $W_X:=W^4_8$ 
denote the Weyl group of $X$, two finite groups. 
As a straightforward application of the Weyl group action on curve classes, see \eqref{Cremona curves}, we make explicit the following result, which was already known.

\begin{proposition}\label{prop}\cite{Mukai05}\cite[Lemma 5.10]{CCF}
Denote by $\Gamma^S_{i,j,k}:=\{p_i,p_j,p_k\}$ and $\Gamma^X_{i,j,k}:=\{q_l: l\neq i,j,k\}$
and
let $w\in \N_1(S)_\mathbb R$ be a divisor on $S$. 
The map from
$W_X$ to $W_S$
sending  ${\rm Cr}_{\Gamma^S_{i,j,k}}$ to ${\rm Cr}_{\Gamma^X_{i,j,k}}$ is an isomorphism.
Moreover we have
	$${\rm Cr}_{\Gamma^S_{i,j,k}}(\rho^{-1}(w))=\rho^{-1}({\rm Cr}_{\Gamma^X_{i,j,k}}(w)).$$
\end{proposition}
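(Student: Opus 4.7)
The proposition consists of two claims: the prescribed correspondence defines a group isomorphism $W_X \to W_S$, and the intertwining identity $\Cr_{\Gamma^S_{i,j,k}}(\rho^{-1}(w)) = \rho^{-1}(\Cr_{\Gamma^X_{i,j,k}}(w))$ holds. My plan is to prove the intertwining identity first and then deduce the isomorphism statement from it. For the deduction, the linear isomorphism $\rho\colon \N_1(X)_\R \to H^2(S,\R)$ induces a conjugation isomorphism $\Psi\colon \mathrm{GL}(\N_1(X)_\R) \to \mathrm{GL}(H^2(S,\R))$ given by $\phi \mapsto \rho\circ \phi\circ \rho^{-1}$, and the intertwining identity rephrases precisely as $\Psi(\Cr_{\Gamma^X_{i,j,k}}) = \Cr_{\Gamma^S_{i,j,k}}$ for every triple $\{i,j,k\}$. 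Because $W_X$ is generated by the 5-point Cremona transformations $\Cr_{\Gamma^X_{i,j,k}}$ and $W_S$ by the 3-point ones $\Cr_{\Gamma^S_{i,j,k}}$, the restriction of $\Psi$ to $W_X$ will land in $W_S$ and, being an injective group homomorphism hitting every generator of $W_S$, will yield the desired isomorphism.

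The intertwining identity is $\R$-linear in $w$, so it suffices to verify it on the basis $\{\alpha,\beta_1,\dots,\beta_8\}$ of $H^2(S,\R)$. For each basis vector I will run two parallel computations: on the $S$-side, apply the divisor Cremona formula \eqref{cremona-rule} with $n=2$ to $w$ and then translate the result via $\rho^{-1}$ using the explicit formulas $\rho^{-1}(\alpha)=3h-\tfrac12\sum_l e_l$ and $\rho^{-1}(\beta_l)=\tfrac12(h-e_l)$; on the $X$-side, first translate $w$ by $\rho^{-1}$ and then apply the curve Cremona formula \eqref{Cremona curves} with $n=4$ and $|\Gamma^X_{i,j,k}|=5$. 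As an illustration, for $w=\alpha$ the divisor formula produces $\Cr_{\Gamma^S_{i,j,k}}(\alpha)=2\alpha-\beta_i-\beta_j-\beta_k$, whose image under $\rho^{-1}$ equals $\tfrac{9}{2}h-\tfrac{1}{2}(e_i+e_j+e_k)-\sum_{l\notin\{i,j,k\}}e_l$; in parallel, the curve Cremona formula applied to $\rho^{-1}(\alpha)=3h-\tfrac12\sum_l e_l$ with $a_{\Gamma^X}=5\cdot\tfrac12-3=-\tfrac12$ yields the same class. For $w=\beta_m$ I will treat separately the cases $m\in\{i,j,k\}$ and $m\notin\{i,j,k\}$ by the same method.

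The main obstacle is purely bookkeeping: the half-integer coefficients introduced by $\rho^{-1}$ must be tracked carefully, and one must apply the two different Cremona formulas with the correct value of $n$ on each side (with indices measured against $\Gamma^S$ or $\Gamma^X$, which are complements of each other in $\{1,\dots,8\}$). No further ingredient will be required beyond the explicit expressions for $\rho^{-1}$ already recorded in \eqref{eq} and the fact that $W_X$ and $W_S$ are generated by their respective families of standard Cremona transformations.
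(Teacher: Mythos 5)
Your proposal is correct: I checked the sample computation for $w=\alpha$ and the two cases of $w=\beta_m$, and the basis-by-basis verification via the divisor Cremona formula \eqref{cremona-rule} on $S$ (with $n=2$) against the curve Cremona formula \eqref{Cremona curves} on $X$ (with $n=4$, applied formally to the half-integer classes), combined with the conjugation argument for the group isomorphism, is sound. The paper itself gives no proof — it cites \cite{Mukai05} and \cite[Lemma 5.10]{CCF} and describes the result as "a straightforward application of the Weyl group action on curve classes" — and your computation is precisely that application made explicit, so the approach is the same as the one the paper indicates.
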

\begin{remark}\label{rmk-lines}
Recall that by \cite{DM2}, for any Del Pezzo surface of type $X^2_s$ and any $i\in\{-1,0,1\}$ the notions of $(i)-$curve and $(i)-$Weyl line coincide,  see Definitions
 \ref{(i) curves} and
\ref{(i) weyl lines}.
{Explicitly we have that 
the $(-1)$-curves ($(0)$-curves, $(1)$-curves respectively) on $S$
are the elements of the effective Weyl  orbit of $\beta_i$ ($\alpha-\beta_i$, $\alpha$, respectively). }
\end{remark}

\begin{proposition} 
	Define the map
	$$\eta:H^2(S, \mathbb{R}) \to \N_1(X)_\mathbb R$$	
	by $\eta(w):=\rho^{-1}(2w{-}K_S)$ for $w\in H^2(S, \mathbb{R})$. Then
	\begin{enumerate}
		\item $2\rho^{-1}$ sends $(-1)-$curves on $S$ to $(0)-$Weyl lines on $X$.
		\item $\eta$ sends $(1)-$curves on $S$ to $(1)-$Weyl lines on $X$.
		\item $\eta$ sends $(0)-$curves on $S$ to the Weyl orbit of $e_i$ on $X$.
		\item $\eta$ sends $(-1)-$curves on $S$ to the Weyl orbit of $2h-\sum_{|I|=3} e_i$ on $X$.
	\end{enumerate}
\end{proposition}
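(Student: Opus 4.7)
The plan is to exploit Weyl-equivariance and thereby reduce each of the four statements to a single computation on a chosen representative of the source orbit. By Proposition \ref{prop}, $\rho^{-1}$ intertwines the isomorphic Weyl groups $W_S$ and $W_X$, and by Definition \ref{DEF anticanonical curve} the class $F$ is Weyl-invariant on $X$. Hence both $2\rho^{-1}$ and the shifted linear map $\eta(w) = 2\rho^{-1}(w) - F$ are equivariant, and each sends an entire $W_S$-orbit into a single $W_X$-orbit in $\N_1(X)_\mathbb{R}$. It therefore suffices, for each item, to pick a convenient representative of the source orbit (using the description recalled in Remark \ref{rmk-lines}) and verify that its image lies in the claimed orbit on $X$.

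For parts (1), (2), and (3), I would use the representatives $\beta_i$, $\alpha$ and $\alpha-\beta_i$, respectively. From the explicit formulas $\rho^{-1}(\alpha) = 3h-\frac{1}{2}\sum_j e_j$, $\rho^{-1}(\beta_i) = \frac{1}{2}(h-e_i)$ and $F = 5h - \sum_j e_j$, short linear calculations yield $2\rho^{-1}(\beta_i)=h-e_i$, $\eta(\alpha)= h$ and $\eta(\alpha-\beta_i) = e_i$. Each of these coincides with a standard representative of the target orbit by Definition \ref{(i) weyl lines}, settling (1)--(3).

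The one part that deserves real care is (4). The naive choice of representative $\beta_i$ leads to $\eta(\beta_i) = (h-e_i)-F = -4h + \sum_{j\neq i} e_j$, a class which is not effective and which only coincides with the target orbit after an explicit sequence of standard Cremona transformations. The plan to circumvent this is to use a different representative of the $(-1)$-Weyl orbit on $S$, namely $w = 2\alpha - \beta_1 - \cdots - \beta_5$; this lies in the same $W_S$-orbit as $\beta_i$ by the classical fact that the $240$ $(-1)$-classes on a Del Pezzo surface of degree one form a single orbit under the Weyl group of type $E_8$. A one-line calculation then yields $\eta(w) = 2h - e_6 - e_7 - e_8$, which is manifestly a representative of the Weyl orbit of $2h-\sum_{|I|=3}e_i$ on $X$.

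The main obstacle, therefore, is selecting this opportune representative in (4); once it is made, the verification is routine linear algebra. A secondary point requiring attention is the sign convention $\rho^{-1}(K_S) = F$ stated just before Proposition \ref{prop}, which is needed in order to rewrite $\eta(w) = \rho^{-1}(2w - K_S)$ as $2\rho^{-1}(w) - F$; with this convention in place, all four computations come out exactly as asserted.
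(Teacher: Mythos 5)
Your proof is correct and follows essentially the same route as the paper: both arguments reduce to Weyl-equivariance of $\rho^{-1}$ and $\eta$ (via Proposition \ref{prop} and the invariance of $F$) together with the same four representative computations $2\rho^{-1}(\beta_i)=h-e_i$, $\eta(\alpha)=h$, $\eta(\alpha-\beta_i)=e_i$, and $\eta(2\alpha-\sum_{i=1}^5\beta_i)=2h-e_6-e_7-e_8$. The only (immaterial) difference is in part (4), where you justify that $2\alpha-\beta_1-\cdots-\beta_5$ lies in the $(-1)$-curve orbit by the classical single-$W(E_8)$-orbit fact, while the paper instead exhibits the explicit Cremona transformation $\Cr_{\Gamma^S_{6,7,8}}(\alpha-\beta_1-\beta_2)=2\alpha-\sum_{i=1}^5\beta_i$.
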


\begin{proof} 	  The proof is an application of Proposition \ref{prop}, {Remark \ref{rmk-lines} and the  following computations:}
	\begin{enumerate}
		\item $2\rho^{-1}(\beta_i)=h-e_i$ 
		\item $\eta(\alpha)=h $  
		\item $\eta(\alpha-\beta_i)=e_i$ 
\item $\eta(2\alpha-\sum_{i=1}^5\beta_i)=2h-\sum_{i\in \{6,7,8\}}e_i$ .
	\end{enumerate}

{From Proposition \ref{prop} it follows 
that 	${\rm Cr}_{\Gamma^S_{i,j,k}}(\eta^{-1}(w))=\eta^{-1}({\rm Cr}_{\Gamma^X_{i,j,k}}(w)).$
Hence, to conclude  the proof of statement $(4)$, it is enough to notice that $\Cr_{\Gamma_{6,7,8}}^S(\alpha-\beta_1-\beta_2)=2\alpha- \sum_{i=1}^5\beta_i$. }
\end{proof}

 In the following remark we translate the description of 
 the birational geometry of $X$ of \cite{CCF}
 into the language developed in our work.

\begin{remark}\label{rem}
	\begin{enumerate}
	\item The effective cone of $X$ is described by hyperplanes in
	$$\{\rho^{-1}(w)^{\perp} \text{ where } w \text{ is a $(-1)$-curve on S }\}\cup \{\eta(w)^{\perp} \text{ where } w \text{ is a $(1)$-curve on S}\}.$$ 
	
	\item The movable cone, $Mov(X)$  is described by hyperplanes in
	$$\{\rho^{-1}(w)^{\perp} \text{ where } w \text{ is a $(-1)$-curve on S }\}\cup \{\eta(w)^{\perp} \text{ where } w \text{ is a $(0)$-curve on S}\}.$$
	
	\item The Mori chamber decomposition of $X$ is given by hyperplanes perpendicular to the Image of $\eta$, that is 
	$$\{\eta(w)^{\perp},\text{ where } w \text{ is a $(-1)$-curve on S}\}\cup \{\eta(w)^{\perp},\text{ where } w \text{ is a $(0)$-curve on S}\}.$$
	\end{enumerate}
We remark that in case (4) of Proposition \ref{prop} some  elements of the Weyl orbit are not effective, in particular the negative of $(-1)$-Weyl lines and the negative of rational normal curves through seven points on $X$ arises as images via $\eta$ of $(-1)$-curves on $S$. 
Indeed $\eta(\alpha-\beta_i-\beta_j)=e_i+e_j-h$ and
$\eta(\beta_j)=\sum_{i=1}^8 e_i-e_j-4h$. 
\end{remark}

{In the following result, which is analogous to  \Cref{strong-duality-thm-n+3}, we 
describe the cones of $k$-moving curves.}

\begin{theorem}[Cones of curves in $X^4_8$]\label{strong-duality-thm-X^4_8}
In $X=X^4_8$, the extremal rays for the cones of curves  $\mathcal C_k$ are effective Weyl curve orbits. In particular,
	\begin{enumerate}[(a)]
		\item the extremal rays of $\mathcal C_0$  are the elements of the Weyl orbits of $h$ and $h-e_i$,
		\item the extremal rays of $\mathcal C_1$  are the elements of the Weyl orbits of $h-e_i$ and $e_i$,
		\item
		the extremal rays of $\mathcal C_2$ are the effective elements of the Weyl orbits of
		$$h-e_i,\ e_i,\ 2h- \sum_{|I|=3}e_i.$$ 
  \item
		the extremal rays of $\mathcal C_3$ are the effective elements of the Weyl orbits of
		$$h-e_i-e_j,\ e_i.$$ 
	\end{enumerate}
\end{theorem}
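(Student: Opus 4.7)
The plan is to combine the strong duality $\mathcal C_k = \mathcal D_k^\vee$ of \cite[Theorem 1.2]{BDPS}, which applies to the Mori dream space $X=X^4_8$, with the Gale--duality description of the birational geometry of $X$ via the Del Pezzo surface $S=X^2_8$ developed earlier in Section \ref{gale section}. Strong duality reduces the statement to two sub-problems: (i) identifying the facets of each cone $\mathcal D_k$, and (ii) translating each facet normal, via the maps $\rho^{-1}$ and $\eta$, into a Weyl orbit of curve classes on $X$.

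For $k=0,1,3$, the cones $\mathcal D_0 = \overline{\Eff}(X)$, $\mathcal D_1 = \overline{\Mov}(X)$, $\mathcal D_3 = \Nef(X)$ have their facets listed in Remark \ref{rem}: they are the hyperplanes $\rho^{-1}(w)^\perp$ for $(-1)$-curves $w$ on $S$ together with $\eta(w)^\perp$ for $(1)$-, $(0)$-, or $(-1)$-curves on $S$, with the specific pairing of curve type to cone indicated there. Applying the proposition preceding Remark \ref{rem}, these normals translate into the desired Weyl orbits: $2\rho^{-1}$ of a $(-1)$-curve on $S$ is $h-e_i$; $\eta$ of a $(1)$-, $(0)$-, or $(-1)$-curve on $S$ gives $h$, $e_i$, or $2h-\sum_{|I|=3}e_i$ respectively. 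This yields parts (a) and (b) directly. For part (d), several members of the Weyl orbit of $2h-\sum_{|I|=3}e_i$ are non-effective, as the last paragraph of Remark \ref{rem} records (for instance $\eta(\alpha-\beta_i-\beta_j)=-(h-e_i-e_j)$); taking effective representatives produces exactly the $(-1)$-Weyl lines $h-e_i-e_j$, which together with $e_i$ are well known to span $\overline{\NE}(X)$.

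For part (c), the cone $\mathcal D_2$ has no direct avatar in the Gale-duality dictionary of Remark \ref{rem}. The plan is to realize $\mathcal D_2$ as the union of those Mori chambers of $X$ whose divisors have stable base locus of codimension at least three, and to determine its facets by combining the MCD of $X^4_8$ of \cite{CCF} with the explicit SBL analysis of \cite{BraDumPos3, BDP-Ciro}; in particular, one must identify which walls of the MCD separate chambers whose SBL acquires a surface component from those whose SBL only has curves or points. Translating the surviving facets via the same $\rho^{-1}$/$\eta$ dictionary yields the Weyl orbits of $h-e_i$, $e_i$, and $2h-\sum_{|I|=3}e_i$. To confirm that these classes belong to $\mathcal C_2$ one exhibits the $2$-moving family directly: $h-e_i$ sweeps the hyperplane through $p_i$, $e_i$ sweeps the exceptional divisor $E_i$, and the pencil of conics through three points in their spanning $2$-plane $L_I$ sweeps $L_I$.

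The main obstacle is part (c): for $k\in\{0,1,3\}$ the argument is essentially formal once strong duality and the Weyl translation formulas are in place, but for $k=2$ the cone $\mathcal D_2$ must be pieced together chamber by chamber from the MCD, and the base-locus computations required to decide which walls survive as facets of $\mathcal D_2$ are nontrivial. The rest of the work consists in checking extremality: since the Weyl action preserves each cone $\mathcal C_k$, once one generator per orbit is verified to be extremal, the Weyl orbit description of the rays follows automatically.
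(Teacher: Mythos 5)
Your proposal follows essentially the same route as the paper: the proof there simply invokes the strong duality $\mathcal{C}_k=\mathcal{D}_k^\vee$ of \cite{BDPS}, reads off $\mathcal{D}_0$ and $\mathcal{D}_1$ from \cite{CCF}, takes $\mathcal{D}_3$ from \cite{DP-positivityI}, and declares $\mathcal{D}_2^\vee$ easy to see. Your more detailed translation of the facet normals through the $\rho^{-1}$/$\eta$ dictionary and your chamber-by-chamber plan for $\mathcal{D}_2$ only expand on steps the paper leaves implicit.
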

\begin{proof}
{By the strong duality theorem for $X^4_{8}$, \cite[Theorem 5.7]{BDPS}, we know that  $\mathcal{C}_k=\mathcal{D}_k^\vee$.}
	The cones $\mathcal{D}_0=\Eff(X)$
	and $\mathcal D_1=\Mov(X)$ have been computed by \cite{CCF}. In particular $\mathcal D^\vee_0$ and $\mathcal D^\vee_1$ are generated by the curves in (a) and (b), respectively.
	It is easy to see that the cone $\mathcal D_2^\vee$ is generated by the curves in (c).
 Finally, the cone $\mathcal{D}_3$ is known by \cite{DP-positivityI}.
\end{proof}

\subsection{Blow up of $\PP^3$ in $7$ points}
If $X=X^3_7$ analogous results hold and are easier to be proved. 
The description of the Weyl cycles in this case can be found in \cite[Section 4]{BDP-Ciro}.
In particular we have the following description of the cones $\mathcal{C}_k$.

\begin{theorem} [Cones of curves in $X^3_7$] \label{strong-duality-thm-X^3_7}
In $X=X^3_7$, the extremal rays for the cones of curves  $\mathcal C_k$ are effective Weyl curve orbits. In particular,
	\begin{enumerate}[(a)]
		\item The extremal rays of $\mathcal C_0$  are the elements of the Weyl orbits of $h$ and $h-e_i$,
		\item The extremal rays of $\mathcal C_1$  are the elements of the Weyl orbits of $h-e_i$ and $e_i$.
	\end{enumerate}
\end{theorem}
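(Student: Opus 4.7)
The plan is to follow the same three-step template used in the proof of Theorem \ref{strong-duality-thm-X^4_8}: apply strong duality, recall the polyhedral description of the relevant divisorial cones, and then read off their extremal rays.

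First, since $X^3_7$ is a Mori dream space by \eqref{list MDS}, the strong duality theorem of \cite[Theorem 1.2]{BDPS} yields $\mathcal{C}_k = \mathcal{D}_k^\vee$ for every $0 \le k \le 2$. For the cases of interest this specializes to $\mathcal{C}_0 = \overline{\Eff}(X^3_7)^\vee$ and $\mathcal{C}_1 = \overline{\Mov}(X^3_7)^\vee$, so that the theorem is reduced to determining the facets of these two divisorial cones.

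Second, I would invoke the explicit descriptions of these cones. The effective cone $\overline{\Eff}(X^3_7)$ is spanned by the (finite) Weyl orbit of an exceptional divisor: this is the case $s = n+4$ discussed in \cite{Mukai01}, \cite{CT}, and \cite{DPR}. The movable cone, in turn, is Weyl invariant by Proposition \ref{mov} and, as in \cite[Theorem 4.7]{pragmatic}, is cut out by the halfspaces $\langle D, D_i\rangle \ge 0$ as $D_i$ ranges over all Weyl divisors. Both cones are therefore rational polyhedral and preserved by the Weyl action $W^3_7$.

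Third, I would identify the extremal rays of the two duals. For (a), Remark \ref{rmk35} already supplies one inclusion: every element of $W^3_7 \cdot h$ and $W^3_7 \cdot (h-e_i)$ spans an extremal ray of $\mathcal{C}_0$. For the reverse inclusion I would show that every facet of $\overline{\Eff}(X^3_7)$ is perpendicular to a class in one of these two orbits, by checking it on the generating Weyl divisors ($E_i$ and hyperplanes $H-E_{i_1}-E_{i_2}-E_{i_3}$, plus their Weyl translates) and then propagating via Weyl equivariance. For (b), the same argument applied to $\overline{\Mov}(X^3_7)$ shows that its facets are exactly the hyperplanes dual to the $(0)$-Weyl lines $W^3_7 \cdot (h-e_i)$ and the exceptional Weyl lines $W^3_7 \cdot e_i$, which in turn give the extremal rays of $\mathcal{C}_1$; this is the direct analogue of part (b) of Theorem \ref{strong-duality-thm-X^4_8}.

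The main obstacle I anticipate is the completeness claim in the third step: showing that no additional extremal rays appear beyond the two stated orbits. A priori, the polyhedral cone $\overline{\Eff}(X^3_7)$ could harbour a facet not perpendicular to any Weyl curve class, and ruling this out requires a global argument. The cleanest way forward is to exploit the Gale duality between $X^3_7$ and the del Pezzo surface $X^2_7$ (both are governed by the $E_7$ Coxeter diagram), in the spirit of Proposition \ref{prop} for the pair $(X^4_8, X^2_8)$: one transfers the classical polyhedral description of $\overline{\Eff}(X^2_7)$ and its dual in terms of $(-1)$-, $(0)$- and $(1)$-curves on the del Pezzo surface to $X^3_7$, at which point the completeness of the two Weyl orbits follows from the standard classification of $(-i)$-curves on a degree-$2$ del Pezzo surface.
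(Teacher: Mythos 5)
Your proposal is correct and follows essentially the route the paper intends: the paper in fact gives no explicit proof for $X^3_7$, merely asserting that the results are "analogous and easier" than the $X^4_8$ case and deferring to \cite[Section 4]{BDP-Ciro}, and your outline (strong duality $\mathcal{C}_k=\mathcal{D}_k^\vee$, the known polyhedral descriptions of $\overline{\Eff}$ and $\overline{\Mov}$, and Gale duality with the degree-$2$ del Pezzo surface $X^2_7$ to certify completeness of the two orbits) is precisely that intended argument. Your explicit attention to the completeness of the list of extremal rays is, if anything, more careful than the paper's treatment.
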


\section{Weyl $r$-planes in non-Mori dream space cases}\label{section:infinite Weyl}

\subsection{Infinity of Weyl orbits}\label{section infinity}

Assume now that $X^n_s$ is not a Mori dream space.
It is well known that there are infinitely many Weyl divisors (i.e.\ Weyl $(n-1)$-planes) on $X$.
In \cite{DM2} the authors proved there are infinitely many $(-1)$-Weyl lines (i.e.\ Weyl $1$-planes, {see Definition \ref{(i) weyl lines}}) if $s\geq n+5$ (and $n\ge3$), and finitely many $(-1)$-Weyl lines if $s\leq n+4$. They also proved that the space $X^5_9$ has finitely many $(-1)$-curves, see Definition \ref{(i) curves}, even if it is not a Mori dream space.

In this section we will prove \Cref{infinity}, which claims that the orbits  
of Weyl $r$-planes, 
for any $2\leq r\leq n-1$, contain infinite elements.

The next proposition was originally used  to give an alternative proof, using the Weyl action on curve classes, that $X^n_s$ is not a Mori Dream Space if $s\geq n+4$ and $n\geq 5$, or $s\geq n+5$ and $n\in \{3,4\}$ or $s\geq 9$ for $n=2$.
We will further use it in this section to prove the infinity of Weyl $r$-planes.

\begin{proposition}[\cite{DM2}] \label{recursionCremona} Let $s\in \{n+4, n+5\}$ and  $L = (d;m_1 \leq m_2 \leq \ldots \leq m_{s})$ denote the curve class $c=dh-\sum_{i=1}^{s}m_ie_i$ in $A_1(X^n_{s})$
	with positive degree and non-negative multiplicities in nondecreasing order.
	Let $I \subset \{1,\ldots,s\}$ have length $n+1$.
\begin{enumerate} 
\item Let $s=n+4$ and $n\geq 5$.
	Assume that the three indices missing from the set $I$
	 are $1, k, \ell$ with $4 \leq k$ and $k+3 \leq \ell \leq n+2$.
	This implies in particular that $1 \notin I$; $2, 3 \in I$;
	$m_{n+3}, m_{n+4} \in I$.
	Assume that
	\begin{equation}\label{recursioninequality}
		d > \sum_{i \in I} m_i, 
		\text{ or } d = \sum_{i \in I} m_i \text{ and } m_{n+4} > m_1.
	\end{equation}
	Then, for $\Gamma=\{1, \ldots, n+1\}$, the Cremona image $\Cr_\Gamma(L)$
	has the form  $\Cr_\Gamma(L) = (d’;m_1’\leq\ldots\leq m_{n+4}')$
and the parameters $d',m_i'$ also satisfy (\ref{recursioninequality}).
	Moreover in this case $d' > d$.

 \item Let $s=n+5$ and $n=3$ or $n=4$. Furthermore, assume 

 \begin{equation}\label{recursioninequality2}
d>m_3 + m_4 +\sum_{i=7}^{n+5} m_i \text{  or } d=m_3+m_4+\sum_{i=7}^{n+5} m_i \text{ and } m_{n+5}>m_1. 
\end{equation}
Then the Cremona images $\Cr_\Gamma(L)$ also has parameters satisfying \eqref{recursioninequality2} and the degree increases.
 
 \end{enumerate}
\end{proposition}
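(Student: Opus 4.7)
My plan is to verify both (\ref{recursioninequality}) and (\ref{recursioninequality2}) by direct computation using formula \eqref{Cremona curves} with $\Gamma = \{1, \ldots, n+1\}$. The key structural input is that, since the $m_i$ are nondecreasing, $\sum_{i=1}^{n+1} m_i$ is the minimum sum of any $n+1$ of the multiplicities, and in particular $\sum_{i=1}^{n+1} m_i \le \sum_{i \in I} m_i$ for the set $I$ appearing in the hypothesis of either part.

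For Part (1), setting $c := -a_\Gamma = d - \sum_{i=1}^{n+1} m_i$, the Cremona image is
\[
d' = d + (n-1)c, \qquad m_i' = m_i + c \text{ for } i \le n+1, \qquad m_i' = m_i \text{ for } i \ge n+2.
\]
The identity
\[
\sum_{i \in I} m_i - \sum_{i=1}^{n+1} m_i \;=\; (m_{n+2} - m_1) + (m_{n+3} - m_\ell) + (m_{n+4} - m_k)
\]
holds term-by-term, with each summand nonnegative thanks to $k, \ell \le n+2$. Under either alternative of \eqref{recursioninequality} I then get $c > 0$ strictly: in the first branch directly from the chain $d > \sum_{i \in I} m_i \ge \sum_{i=1}^{n+1} m_i$; in the equality branch, the chain $m_1 \le m_k \le m_{n+2} \le m_{n+4}$ together with $m_{n+4} > m_1$ forces at least one of the three displayed differences to be strictly positive. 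Thus $d' - d = (n-1)c > 0$, which is the asserted strict degree increase.

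The remaining and most delicate step is to exhibit, for the sorted image $(d'; m_1' \le \ldots \le m_{n+4}')$, an index set $I' = \{1, \ldots, n+4\} \setminus \{1, k', \ell'\}$ with $4 \le k'$ and $k'+3 \le \ell' \le n+2$ for which \eqref{recursioninequality} holds again. The unsorted image is the concatenation of two monotone blocks, the shifted $\{m_i + c\}_{i \le n+1}$ and the unchanged $\{m_i\}_{i \ge n+2}$, so sorting amounts to interleaving the three original top values among the shifted positions. I expect this case analysis on the interleaving to be the main obstacle: for each placement of $m_{n+2}, m_{n+3}, m_{n+4}$ I will choose $I'$ so that the difference telescopes to
\[
d' - \sum_{i \in I'} m_i' = \bigl(d - \sum_{i \in I} m_i\bigr) + (\text{explicit nonnegative correction}),
\]
where the correction vanishes only in the boundary equality case; in that boundary case, the largest multiplicity $m_{n+4}'$ is still the unchanged $m_{n+4}$ and remains strictly above $m_1' = m_1 + c$ because $c > 0$ is itself a strict positive increment, so the $m_{n+4}' > m_1'$ clause persists.

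Part (2) follows the same three-step strategy for $s = n+5$ with $n \in \{3,4\}$ and $I = \{3,4\} \cup \{7, \ldots, n+5\}$, whose complement is $\{1,2,5,6\}$. The analog of the key identity reads
\[
\sum_{i \in I} m_i - \sum_{i=1}^{n+1} m_i = (m_7 - m_1) + (m_8 - m_2) + \begin{cases} 0 & \text{if } n=3, \\ m_9 - m_5 & \text{if } n=4, \end{cases}
\]
which is again a sum of nonnegative terms, strictly positive whenever $m_{n+5} > m_1$ by the same monotonicity chain used above. The strict degree increase and the verification of \eqref{recursioninequality2} for the sorted image then follow by the identical interleaving bookkeeping; only the combinatorial pattern of missing indices changes.
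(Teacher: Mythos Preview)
The paper does not prove this proposition at all: its ``proof'' is a two-line citation to \cite{DM2}, Lemma~5.4 (and an equation reference therein). So there is no argument in the paper to compare yours against; you are attempting to reconstruct the cited lemma.

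Your computation of $c = d - \sum_{i=1}^{n+1} m_i$, the key identity
\[
\sum_{i\in I} m_i - \sum_{i=1}^{n+1} m_i = (m_{n+2}-m_1)+(m_{n+3}-m_\ell)+(m_{n+4}-m_k),
\]
and the deduction that $c>0$ (hence $d'>d$) under either branch of \eqref{recursioninequality} are all correct. The analogous identities in Part~(2) are also correct.

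The gap is in the second half: you explicitly label the interleaving case analysis as ``the main obstacle'' and then do not carry it out. Your boundary-case claims are not merely unproved but appear to be wrong as stated. You assert that after sorting, $m'_{n+4}=m_{n+4}$ (the unchanged top value stays on top) and $m'_1=m_1+c$, and that $m_{n+4}>m_1+c$. But in the equality branch $d=\sum_{i\in I}m_i$ one computes
\[
m_{n+4}-(m_1+c)=m_k+m_\ell-m_{n+2}-m_{n+3}\le 0,
\]
since $k\le n-1$ and $\ell\le n+2$; so the strict inequality you invoke fails, and indeed the shifted block can overtake $m_{n+4}$. The verification that \eqref{recursioninequality} holds for the sorted image with a new admissible $I'$ genuinely requires tracking where $m_{n+2},m_{n+3},m_{n+4}$ land among the shifted values and choosing $k',\ell'$ accordingly; this is the actual work of \cite{DM2}, Lemma~5.4, and your sketch does not yet supply it.
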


\begin{proof} Part (1) is \cite{DM2}[Lemma 5.4]. Part (2) follows from \cite{DM2}[Lemma 5.4], see page 32, relation (5.6).
\end{proof}
Recall from \eqref{Cremona zero}  the following notation, for any $\mathcal I$ of lenght $r+1$
$$ c_{\mathcal I}=rh-\sum_{i\in \mathcal I} e_i.$$
The curves of classes $c_{\mathcal I}$ for $|\mathcal I|=2$ are the $(-1)$-Weyl lines, see Definition \ref{(i) weyl lines}.

In the following lemma we study the orbit of a curve of class $c_{\mathcal I,0}$ for any $|\mathcal I|\ge3$, i.e.\ for $r\ge2$.

\begin{proposition}\label{cor} Let $X^n_{s}$ be not a Mori dream space and $2\leq r \leq n-1$.
 Then the  effective Weyl orbit of the curve class $ c_{\mathcal I}$, for any $\mathcal I$ of length $r+1$,
is infinite. 
	\end{proposition}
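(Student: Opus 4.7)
The strategy is to produce, inside the effective orbit $(W_s^n \cdot c_{\mathcal I})^+$, an infinite sequence of classes of strictly increasing degree by iterating Proposition~\ref{recursionCremona}.

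As a first reduction, observe that for $s_0 \le s$, any generator $\Cr_\Gamma$ of $W_{s_0}^n$ with $\Gamma \subset \{1, \ldots, s_0\}$ also sits in $W_s^n$, and its action via \eqref{Cremona curves} leaves the multiplicities at indices $> s_0$ unchanged. Hence, assuming $\mathcal I \subset \{1, \ldots, s_0\}$, the effective orbit of $c_{\mathcal I}$ in $X^n_s$ contains (after embedding) the effective orbit of $c_{\mathcal I}$ in $X^n_{s_0}$. It therefore suffices to treat the minimal non-MDS cases: $s_0 = n+4$ for $n \ge 5$, and $s_0 = n+5$ for $n \in \{3, 4\}$.

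The key step is to verify the recursion hypothesis for $c_{\mathcal I}$ written in nondecreasing form $(r; 0^{s_0-r-1}, 1^{r+1})$. For $n \ge 5$, I take $I$ with complement $\{1, 4, n+2\}$: one checks $4 \le 4$ and $4+3 \le n+2 \le n+2$, and since $2 \le r \le n-1$, one has $4 \notin \mathcal I$ and $n+2 \in \mathcal I$, so that $\sum_{i \in I} m_i = r = d$ and $m_{n+4} = 1 > 0 = m_1$, i.e.\ the equality case of \eqref{recursioninequality}. For $(n, r) \in \{(3, 2), (4, 3)\}$ one finds $m_3 + m_4 + \sum_{i=7}^{n+5} m_i = r = d$ together with $m_{n+5} = 1 > 0 = m_1$, so the equality case of \eqref{recursioninequality2} holds. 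The only case where the hypothesis fails directly is $(n, r) = (4, 2)$, for which $m_3 + m_4 + \sum_{i=7}^9 m_i = 3 > 2 = d$; here I would apply a preliminary Cremona $\Cr_\Gamma$ with $\Gamma = \{1, \ldots, 5\}$. One computes $a_\Gamma = -2$ and a sorted image $(8; 0, 1, 1, 1, 2, 2, 2, 2, 2)$, which does satisfy \eqref{recursioninequality2} in the equality form with $m_9 = 2 > 0 = m_1$. Effectivity of this image is transparent because $c_{\mathcal I}$ is the class of an irreducible conic through three of $p_6, \ldots, p_9$, which is disjoint from the indeterminacy locus of $\Cr_\Gamma$.

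With the hypothesis secured in all cases, iterating Proposition~\ref{recursionCremona} produces an infinite sequence in $(W_s^n \cdot c_{\mathcal I})^+$ of strictly increasing degrees, and the orbit is therefore infinite. The subtle point I expect is effectivity along the iteration, and especially in the preparatory step for $(n,r)=(4,2)$; this reduces to the observation that at every application of the recursion one has $a_\Gamma < 0$ (precisely what forces the degree to grow), so the curves remain irreducible strict transforms throughout.
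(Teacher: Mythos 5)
Your proof is correct and follows essentially the same route as the paper's: reduce to the minimal non-Mori-dream cases $X^n_{n+4}$ ($n\ge 5$) and $X^n_{n+5}$ ($n\in\{3,4\}$), verify the hypothesis of Proposition~\ref{recursionCremona} for $c_{\mathcal I}$ in sorted form (with the same choice of missing indices $\{1,4,n+2\}$ and the same equality case $d=\sum_{i\in I}m_i$, $m_s>m_1$), and iterate to get unbounded degrees. The only addition is that for $(n,r)=(4,2)$ you exhibit the explicit preliminary transformation $\Cr_{\{1,\dots,5\}}$ producing the degree-$8$ class $(8;0,1,1,1,2,2,2,2,2)$, which the paper merely asserts to be in the orbit and leaves to the reader.
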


\begin{proof} 
It is enough to prove the statement for $s=n+4$ with $n\geq 5$ and for $s=n+5$ with $n\in \{3,4\}$.

Up to permutation of the points, it is enough to prove the claim for $\mathcal I=\{n+4-r,\ldots,n+4\}$. Let us call 
$c_r:=c_{\mathcal I}=rh-\sum_{i=n+4-r}^{n+4} e_i$
\begin{enumerate}
\item Assume first $s=n+4$ and $n\geq 5$.
 
 Apply Proposition \ref{recursionCremona} part (1) to the curve class $c_r$, with $m_j=0$ for $1\leq j\leq n+3-r$ while $m_i=1$ for $n+4\geq i \geq n+4-r$. For arbitrary $r$ with $2\leq r\leq n-1$ we can chose $k=4$ and $l=n+2$ so $m_4=0$ and $m_{n+2}=1$.
Moreover since $r\geq 2$ then $l=n+2\geq n+4-r$, that is, $l\in \mathcal{I}$ and also $l\notin I$, where $I$ is the index set defined in Proposition \ref{recursionCremona}. 
 
 If $2\leq r\leq n-1$, the curve class $c_r$ satisfies the requirement	$d = \sum_{i \in I} m_i$ and $m_{n+4}>m_1$.

\item We assume $s=n+5$ and $n\in\{3,4\}$. We further apply Proposition \ref{recursionCremona} part (2) to the curve class $c_2=2h-e_6-e_7-e_8$ in $X^3_8$ with $m_j=0$ for $j\leq 5$ and $m_i=1$ for $i\geq 6$. We see that $c_2$ satisfies the  requirement \ref{recursioninequality2}
$$d=m_3+m_4+\sum_{i=7}^{n+5} m_i \text{ and } m_{n+5}>m_1.$$ 
Similarly, $c_3=3h-e_6-e_7-e_8-e_9$ in  $X^4_9$ satisfies 
$$d=m_3+m_4+\sum_{i=7}^{n+5} m_i \text{ and } m_{n+5}>m_1.$$

However, the curve $c_2=2h-e_7-e_8-e_9$ in  $X^4_9$ does not satisfy the hypothesis of Proposition \ref{recursionCremona}, namely it fails inequality \ref{recursioninequality2}. Furthermore, we leave it to the reader to check that the curve $d=8$, $m_1=0$, $m_2= m_3=m_4=1$ and $m_j=2$ for $5\leq j \leq 9$ is in the Weyl orbit of $c_2$ in  $X^4_9$ and satisfies the inequality  \ref{recursioninequality2}. We conclude that the curve class $c_2$ also has an infinite Weyl orbit in  $X^4_9$.
 \end{enumerate}
\end{proof}

\begin{corollary}\label{exc weyl}
Let $X^n_{s}$ be not a Mori dream space.
Then there are infinitely many exceptional Weyl lines. 
\end{corollary}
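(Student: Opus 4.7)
My plan is to deduce the statement from Proposition \ref{cor} via a single Cremona identity. The observation I will exploit is that one standard Cremona transformation converts a curve class of the form $c_\mathcal{I} = (|\mathcal{I}|-1)h - \sum_{i\in\mathcal{I}} e_i$ with $|\mathcal{I}|=n$ into an exceptional line class, so the infinitude of one orbit forces the infinitude of the other.

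First I will treat the case $n\ge 3$. Fix $\mathcal{I}=\{1,\ldots,n\}$ and let $\Gamma = \mathcal{I}\cup\{n+1\}$, which has cardinality $n+1$ (using $s\ge n+1$). A direct application of formula \eqref{Cremona curves} to $c_\mathcal{I}=(n-1)h-\sum_{i\in\mathcal{I}} e_i$, with $\delta=n-1$, $\mu_i=1$ for $i\in\mathcal{I}$ and $\mu_i=0$ otherwise, gives $a_\Gamma=n-(n-1)=1$ and
\[
\Cr_\Gamma(c_\mathcal{I}) \;=\; 0\cdot h \;-\; \sum_{i\in\mathcal{I}}(1-1)e_i \;-\;(0-1)e_{n+1}\;=\; e_{n+1}.
\]
Since both $c_\mathcal{I}$ and $e_{n+1}$ are effective, $e_{n+1}$ lies in the effective Weyl orbit of $c_\mathcal{I}$. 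Because each standard Cremona transformation is an involution, the relation of lying in the same effective Weyl orbit is symmetric: reversing the one-step sequence shows $c_\mathcal{I}\in(W^n_s\cdot e_{n+1})^+$. Hence the orbits coincide, and by Proposition \ref{cor} applied with $r=n-1\ge 2$ this common orbit is infinite.

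The case $n=2$ does not fit the range $r\ge 2$ of Proposition \ref{cor} and requires a separate short argument. Here non-Mori-dream forces $s\ge 9\ge n+5$, so by the results of \cite{DM2} recalled in the introduction there are already infinitely many $(-1)$-Weyl lines. In dimension two the identity $\Cr_{\{i,j,k\}}(e_i)=h-e_j-e_k$ shows that the effective orbit of the exceptional class $e_i$ contains all $(-1)$-Weyl lines, which gives the conclusion in this case as well.

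The content lies entirely in Proposition \ref{cor}; the Cremona identity above is routine and the symmetry of the effective orbit is immediate from the involutive nature of $\Cr_\Gamma$. The only point demanding a little care is the splitting into the cases $n\ge 3$ and $n=2$, the latter being the sole step that cannot be handled by Proposition \ref{cor} directly.
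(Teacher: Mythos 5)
Your proof is correct and is essentially the paper's own argument: a single standard Cremona transformation identifies the effective Weyl orbit of an exceptional line class with that of $c_{\mathcal J}$ for $|\mathcal J|=n$ (the paper applies $\Cr_{\mathcal I}$ to $e_i$, you apply the inverse direction, which is the same identity), and Proposition \ref{cor} with $r=n-1$ does the rest. Your separate treatment of $n=2$ is a welcome extra precision, since Proposition \ref{cor} requires $r\ge 2$ and the paper's proof implicitly assumes $n\ge 3$ at that step.
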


\begin{proof}
Exceptional Weyl lines are elements of the Weyl orbit of $e_i$. 
If $\mathcal I \subset\{1, \ldots,s\}$ has length $n+1$ and $i\in \mathcal I$, then
$\Cr_{\mathcal I}(e_i)=c_{\mathcal I\setminus\{i\}}$.
We apply Proposition \ref{cor} to the curve class $c_{\mathcal I\setminus\{i\}}$.
    \end{proof}

\begin{lemma}\label{rmk weyl orbit curves}
Assume that $s\ge n+3$, $1\le r\le n-1$.  
Given $\mathcal{I},\mathcal{J}\subset\{1,\dots,s\}$ with $|\mathcal{I}|=|\mathcal{J}|$, then there is $w\in W^n_s$ such that $w(c_{\mathcal{I
}})=c_{\mathcal{J
}}$.
\end{lemma}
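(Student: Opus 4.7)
The plan is to show that $W^n_s$ contains the full symmetric group $S_s$ on the $s$ indices, and then invoke the transitivity of $S_s$ on subsets of a fixed cardinality. Since the action on curve classes is induced by duality from the action on divisors, this immediately moves $c_{\mathcal I}$ to $c_{\mathcal J}$ as soon as one can permute indices at will.

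For each pair $i\ne j$ in $\{1,\dots,s\}$, I would exhibit an element $w_{ij}\in W^n_s$ that swaps $e_i$ with $e_j$, fixes $h$, and fixes $e_k$ for $k\notin\{i,j\}$. Since $s\ge n+3$, one can choose an $n$-element subset $\Gamma_0\subset\{1,\dots,s\}\setminus\{i,j\}$ and form the two $(n+1)$-subsets $\Gamma_1:=\Gamma_0\cup\{i\}$ and $\Gamma_2:=\Gamma_0\cup\{j\}$. I propose the triple composition
$$w_{ij}:=\Cr_{\Gamma_1}\circ\Cr_{\Gamma_2}\circ\Cr_{\Gamma_1}\in W^n_s.$$
A direct computation using the divisor formula \eqref{cremona-rule}, tracking the auxiliary scalar $b_\Gamma$ at each stage, will show $w_{ij}(H)=H$, $w_{ij}(E_i)=E_j$, $w_{ij}(E_j)=E_i$, and $w_{ij}(E_k)=E_k$ for every $k\notin\{i,j\}$. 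The key observations are: $\Cr_{\Gamma_1}(E_i)=H-\sum_{l\in\Gamma_0}E_l$ since $i\in\Gamma_1$; this divisor is invariant under $\Cr_{\Gamma_2}$ (because $b_{\Gamma_2}=0$, as $\Gamma_0\subset\Gamma_2$ and the coefficients on $\Gamma_0$ are all $1$ while $d=1$ and $n-1$ of them contribute to $b_{\Gamma_2}$)—wait, one instead checks that $\Cr_{\Gamma_2}$ turns it into $E_j$ (via $b_{\Gamma_2}=1$), and the final $\Cr_{\Gamma_1}$ fixes $E_j$ since $j\notin\Gamma_1$. The analogous computations handle $E_j$ and any $E_k$ with $k\notin\{i,j\}$. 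By the duality between $\N^1(X^n_s)_\R$ and $\N_1(X^n_s)_\R$, the induced action of $w_{ij}$ on curve classes has the asserted form: $h\mapsto h$ and $e_l\mapsto e_{(i\,j)(l)}$.

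Once the transpositions are realised, the conclusion is immediate. Given $\mathcal I,\mathcal J\subset\{1,\dots,s\}$ with $|\mathcal I|=|\mathcal J|$, pick any $\sigma\in S_s$ with $\sigma(\mathcal I)=\mathcal J$ and factor it as a product of transpositions $\sigma=(i_1\,j_1)\cdots(i_N\,j_N)$. Setting $w:=w_{i_1 j_1}\circ\cdots\circ w_{i_N j_N}\in W^n_s$ yields
$$w(c_{\mathcal I})=w\Bigl((|\mathcal I|-1)h-\sum_{i\in\mathcal I}e_i\Bigr)=(|\mathcal I|-1)h-\sum_{i\in\mathcal I}e_{\sigma(i)}=c_{\sigma(\mathcal I)}=c_{\mathcal J}.$$

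The only technical content is the verification that the triple Cremona composition realises the transposition on $E$-classes; this is a mechanical calculation with \eqref{cremona-rule} and should present no conceptual obstacle. The hypothesis $s\ge n+3$ enters only through the need to pick $\Gamma_0\subset\{1,\dots,s\}\setminus\{i,j\}$ of size $n$; note that this works already for $s\ge n+2$, consistent with \Cref{remark curve orbit} which shows the statement genuinely fails for $s=n+1$ (where only one Cremona is available). Note also that the lemma concerns the Weyl orbit, not the effective Weyl orbit: some intermediate images in the chain $\Cr_{\Gamma_1}\Cr_{\Gamma_2}\Cr_{\Gamma_1}$ may well be non-effective, and this is what makes the conclusion compatible with \Cref{remark curve orbit}.
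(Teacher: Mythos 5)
Your proof is correct, and it takes a genuinely different route from the paper. The paper deduces the lemma from Proposition~\ref{proposition curve orbit}: it replaces the indices of $\mathcal I$ by those of $\mathcal J$ one at a time, each single swap being realised inside the effective Weyl orbit $\{c_{I,t}\}$ classified there. You instead prove the stronger structural fact that the group generated by the standard Cremona transformations contains every transposition of the $s$ indices, via the triple composition $\Cr_{\Gamma_1}\circ\Cr_{\Gamma_2}\circ\Cr_{\Gamma_1}$ with $\Gamma_1=\Gamma_0\cup\{i\}$, $\Gamma_2=\Gamma_0\cup\{j\}$; I have checked the computation with \eqref{cremona-rule} and \eqref{Cremona curves} and it does give $H\mapsto H$, $E_i\leftrightarrow E_j$, $E_k\mapsto E_k$, hence $e_l\mapsto e_{(i\,j)(l)}$ on curve classes. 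Your argument is more elementary and self-contained (it does not need the orbit classification), it isolates a fact of independent interest ($S_s\subset W^n_s$ for $s\ge n+2$), and your observation that $s=n+2$ already suffices is correct. What the paper's route buys, and yours does not, is effectivity: the chain through the classes $c_{I,t}$ stays inside the \emph{effective} Weyl orbit, whereas your transposition chain passes through non-effective intermediate classes (e.g.\ $\Cr_{\Gamma_1}(c_{\mathcal I})$ has degree $r-(n-1)a_{\Gamma_1}$, which can be negative). This is harmless for the lemma as literally stated, as you note, but the downstream use in Proposition~\ref{1-1} defines the map $\Phi_r$ on the \emph{effective} orbit of $c_{\mathcal I}$, so the effective version is the one the paper actually leans on later; if you wanted your argument to feed that application you would need to add a word on why $c_{\mathcal J}$ nonetheless lies in the effective orbit (or fall back on Proposition~\ref{proposition curve orbit} for that). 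Finally, please clean up the self-correcting parenthetical about $b_{\Gamma_2}$: the correct value is $b_{\Gamma_2}=1$ and $\Cr_{\Gamma_2}$ sends $H-\sum_{l\in\Gamma_0}E_l$ to $E_j$; the first guess should simply be deleted.
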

\begin{proof}
This follows from Proposition \ref{proposition curve orbit}. Indeed let $\mathcal I, \mathcal J \subset\{1,\dots,s\}$ such that $|\mathcal{I}|=|\mathcal{J}|$, and let $\mathcal I \setminus \mathcal J = \{i_1, \ldots, i_l\}$ and $\mathcal J \setminus \mathcal I = \{j_1, \ldots, j_l\}$. By Proposition \ref{proposition curve orbit} we know that $c_\mathcal I$ is in the same effective Weyl orbit as $c_{\mathcal I_1}$, for $\mathcal I_1 = (\mathcal I\setminus \{i_1\}) \cup \{j_1\}$. In turn, $c_{\mathcal I_1}$ is in the same Weyl orbit as $c_{\mathcal I_2}$, for $\mathcal I_2 = (\mathcal I_1 \setminus \{i_2\}) \cup \{j_2\}$. We can repeat this process until we reach $\mathcal I_l=\mathcal J$.
\end{proof}

\begin{proposition}\label{1-1}
Assume that $s\ge n+3$ and fix $1\le r\le n-1$.
Then there is a  $1$-$1$ correspondence between the set of all Weyl $r$-planes of $X^n_s$ and  the effective orbit of $c_\mathcal{I}$, for any set $\mathcal{I}\subset \{1,\dots,s\}$, with $|\mathcal{I}|=r+1$.
 \end{proposition}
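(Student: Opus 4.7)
The plan is to construct a Weyl-equivariant bijection via $\phi(L_\mathcal{I})\mapsto\phi(c_\mathcal{I})$. First I would note that by \Cref{rmk weyl orbit curves} the effective orbit $(W^n_s\cdot c_\mathcal{I})^+$ does not depend on the choice of $\mathcal{I}$ of cardinality $r+1$, so the statement is well-posed. Then I would define $\Phi$ on a Weyl $r$-plane $W=\phi(L_\mathcal{J})$ by $\Phi(W):=\phi(c_\mathcal{J})\in A_1(X^n_s)$. The key input is \Cref{who-sweeps-what}: $L_\mathcal{J}$ is swept out by the irreducible family of rational normal curves of class $c_\mathcal{J}$. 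Since every Weyl element $\phi$ acts as a birational automorphism of $X^n_s$ (an isomorphism on an open subset), $\phi$ carries this family to one of class $\phi(c_\mathcal{J})$ sweeping $W$; hence $\Phi(W)$ is intrinsically the class of the sweeping family that realizes $W$ as the Weyl transform of an $r$-plane through $r+1$ points.

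For well-definedness, I would argue as follows. If $W=\phi_1(L_{\mathcal{J}_1})=\phi_2(L_{\mathcal{J}_2})$ as classes in $A_r(X^n_s)$, both $\phi_1(c_{\mathcal{J}_1})$ and $\phi_2(c_{\mathcal{J}_2})$ are classes of sweeping families of $W$ of the same kind (rational curves of the appropriate degree obtained as birational images of rational normal curves through $r+1$ points), so the uniqueness of this distinguished sweeping family forces $\phi_1(c_{\mathcal{J}_1})=\phi_2(c_{\mathcal{J}_2})$. Equivalently, setting $\psi=\phi_2^{-1}\phi_1$, one has $\psi([L_{\mathcal{J}_1}])=[L_{\mathcal{J}_2}]$, and a short analysis using the explicit form of the class $[L_\mathcal{J}]$ (its $E_{r,i}$-degrees distinguishing the indices in $\mathcal{J}$ from those outside, in the spirit of \Cref{rmk: joins}) shows that $\psi(c_{\mathcal{J}_1})=c_{\mathcal{J}_2}$. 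Surjectivity is immediate from the construction, and injectivity follows by the reverse argument: two Weyl $r$-planes with the same sweeping curve class coincide as the Zariski closure of the union of that family.

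The hard part will be the bookkeeping of effectiveness along the Cremona chain: the definition of the effective Weyl orbit requires each partial composition $\phi_l\circ\cdots\circ\phi_1$ to produce an effective class. I would need to verify that at every such intermediate step the equivalence ``$\phi_l\circ\cdots\circ\phi_1(c_\mathcal{I})$ is effective in $A_1(X^n_s)$'' $\Leftrightarrow$ ``$\phi_l\circ\cdots\circ\phi_1(L_\mathcal{I})$ is effective in $A_r(X^n_s)$'' holds. This equivalence rests on the sweeping relation of \Cref{who-sweeps-what}, which is preserved under standard Cremona transformations outside their indeterminacy loci: whenever one side is realized by a genuine irreducible subvariety of the appropriate dimension, so is the other, and their classes match. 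Once this step-by-step matching is in place, the two effective orbits correspond under $\Phi$ and the bijection follows.
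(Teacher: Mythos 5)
Your overall strategy coincides with the paper's: the correspondence is $w(L_{\mathcal I})\leftrightarrow w(c_{\mathcal I})$, \cref{rmk weyl orbit curves} reduces everything to a single index set and yields surjectivity, and \cref{who-sweeps-what} supplies the sweeping relation. Writing the map from planes to curves rather than from curves to planes is cosmetic; either way one must prove both implications ``same Weyl $r$-plane $\Rightarrow$ same curve class'' and its converse.

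The genuine gap is in the first of these, your well-definedness step, which you settle by invoking ``the uniqueness of this distinguished sweeping family.'' That uniqueness is essentially the statement to be proved, and as a general principle it fails: a subvariety is swept out by many families of curves of different classes (the plane $L_{\mathcal I}$ itself is swept by lines through pairs of its points as well as by the degree-$r$ rational curves of class $c_{\mathcal I}$), so you would have to show that at most one class \emph{from the orbit} sweeps a given Weyl $r$-plane. Your fallback --- deducing $\psi(c_{\mathcal J_1})=c_{\mathcal J_2}$ from $\psi([L_{\mathcal J_1}])=[L_{\mathcal J_2}]$ by ``a short analysis of the $E_{r,i}$-degrees'' --- runs into exactly the difficulty the paper is structured to avoid: the Weyl action on intermediate-dimensional cycle classes has no manageable closed form, and equality of cycle classes does not immediately give equality of subvarieties. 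The paper closes this step with a concrete trichotomy: after normalizing by $w_1^{-1}$ one compares $(w_1^{-1}\circ w_2)(c_{\mathcal I})$ with $c_{\mathcal I}$; if the image class is non-effective, so is the image of $L_{\mathcal I}$; if it is effective of degree $r$, it equals $c_{\mathcal J}$ for some $\mathcal J\neq\mathcal I$ and sweeps a different linear plane; and if it has degree $>r$, the $r$-fold it sweeps interpolates more than $r+1$ points and hence cannot equal $L_{\mathcal I}$. Some argument of this kind is needed to make your ``uniqueness'' claim true. Your worry about effectiveness bookkeeping along the Cremona chain is legitimate but secondary: it is handled (for the same price the paper pays) by the sweeping relation of \cref{who-sweeps-what} together with \cref{proposition curve orbit}, since at each intermediate step the curve class is effective exactly when it sweeps out the corresponding effective $r$-cycle.
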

 \begin{proof}
Fix $\mathcal{I}\subseteq\{1,\dots,s\}$, a set of length $r+1$. 
If $r=1$,  the statement amounts to saying that the set of all Weyl $1$-planes of $X^n_s$ coincides with the effective Weyl orbit of a fixed line $L_\mathcal{I}$. This follows easily from  \cref{rmk weyl orbit curves}. 

 Assume now that $r\ge2$.
We define a map $\Phi_r$ from the effective Weyl orbit of $c_\mathcal{I}$ to the set of Weyl $r$-planes as follows:
$$
\Phi_r: w(c_\mathcal{I})\mapsto w(L_\mathcal{I}).
$$

First, we show that it is surjective. Take $W$ any Weyl $r$-plane. Then, by definition, there is an index set $\mathcal{J}$ with $|\mathcal{J}|=r+1$ and a Weyl transformation $w_\mathcal{J}\in W^n_s$ such that $w_\mathcal{J}(L_\mathcal{J})=W$.  
By  \cref{rmk weyl orbit curves}, there is $\omega'\in W^n_s$ such that $\omega'(c_\mathcal{I})=c_\mathcal{J}$. Then we have
$\Phi_r((w_\mathcal{J}\circ\omega')(c_\mathcal{I}))=W$.

 To prove injectivity, consider two distinct curves $w_1(c_\mathcal{I})$ and $w_2(c_\mathcal{I})$, for $w_1,w_2 \in W^n_s$ and assume, without loss of generality, that  $\deg w_2(c_\mathcal{I})\ge \deg w_1(c_\mathcal{I})$.
Applying the transformation $w_1^{-1}$ to both curves, we obtain the distinct curves $c_\mathcal{I}$ and $(w_1^{-1}\circ w_2)(c_\mathcal{I})$. 
It is enough to show that 
$(w_1^{-1}\circ w_2)(L_\mathcal{I})\ne L_\mathcal{I}$. 
{If $(w_1^{-1}\circ w_2)(c_\mathcal{I})$ is not effective, then it is easy to see that also $(w_1^{-1}\circ w_2)(L_\mathcal{I})$ is not effective and we are done.}
Now, if $\deg (w_1^{-1}\circ w_2)(c_\mathcal{I})=\deg c_\mathcal{I}=r$, then $(w_1^{-1}\circ w_2)(c_\mathcal{I})=c_\mathcal{J}$ for some $\mathcal{J}$ with $|\mathcal{J}|=r+1$ and $\mathcal{I}\ne \mathcal{J}$. Hence the two curves sweep out two distinct  Weyl $r$-plane: $L_\mathcal{I}$  and $L_\mathcal{J}$ respectively. 
On the other hand, if $(\deg w_1^{-1}\circ w_2)(c_\mathcal{I})>r$, then the $r$-dimensional subvariety it sweeps out will interpolate more than $r+1$ points, 
hence it cannot be a Weyl $r$-plane, proving in particular that $(w_1^{-1}\circ w_2)(L_\mathcal{I})\ne L_\mathcal{I}$.
\end{proof}

We are ready the give the proof of the main result of this section, which is ane easy consequence of the previous proposition.

\begin{proof}[Proof of  \cref{infinity}] 
Let $\mathcal{I}\subset\{1,\ldots,s\}$ be a subset of length $r+1$.
We know by   \cref{cor}  that the effective Weyl orbit of $c_\mathcal{I}$ is infinite.
 We conclude by  \cref{1-1}.
\end{proof}

\begin{remark}  Let $X^n_s$ be not a Mori dream space. We summarize the results obtained:
\begin{enumerate}
\item In $X^5_9$ there are finitely many $(-1)$-curves \cite{DM2} and infinitely many Weyl $r$-planes for $2\leq r\leq 4$. 
In particular there are finitely many $(-1)$-Weyl lines.

\item In $X^n_{n+4}$ there are finitely many $(-1)$-Weyl lines and infinitely many Weyl $r$-planes for $2\leq r\leq n-1$.
\item In $X^n_s$ with $s\geq n+5$ there are infinitely many $(-1)$-Weyl lines and infinitely many Weyl $r$-planes for $1\leq r\leq n-1$. 
\end{enumerate}
\end{remark}

\begin{remark}
It is clear that the elements of the orbits of $c_I$ for $I$ of length $r+1$ are among the generators of the cones $\mathcal C_{n-r}$.
\end{remark}

\begin{remark}
If $s\le 2^n$, by \cite{CLO} and \cite{DP-positivityI}, it is known that  the Mori cone of $X^n_s$ is finitely generated by the classes $e_i$ and $c_I$, for $I$ of length $2$, while all other $1$-Weyl lines are in $\mathcal{C}_{n-1}\setminus \mathcal{C}_{n-2}$ but 
are not extremal on $\mathcal{C}_{n-1}$. Moreover, we have seen that if $X^n_s$ is a MDS, then the latter set of curves is finite, while if $X$ is not a Mori dream space, the set is infinite. If $s>2^n$, we do not know if  $\mathcal{C}_{n-1}$ has finite or infinite extremal rays; in any case, any non-linear extremal ray is not spanned by $1$-Weyl lines.
\end{remark}

\subsection{Weyl stable base locus} \label{WSBL section}

\begin{lemma}\label{weyl base locus} 
Let $w$ be an element of the Weyl group of $X^n_s$. The multiplicity of containment of a Weyl $r$-plane $W=w(L_{I})$ in the base locus of an effective divisor $D$ is
\begin{equation}\label{Weyl-mult}
k_W(D)=\max( -w(c_I) \cdot D,0)
\end{equation}
where $c_I$ is defined as in \eqref{Cremona zero}.
\end{lemma}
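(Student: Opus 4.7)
The plan is to reduce the statement to the case of a standard linear cycle $L_I$ by transporting $D$ along the Weyl element $w$, and then invoke the known multiplicity formula for joins.

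First, recall from \cite[Lemma 4.1]{BraDumPos3} (also \cite[Proposition 2.3]{BDP-TAMS}), as quoted in \eqref{k_J}, that for the linear span $L_I$ the multiplicity of containment of $L_I$ in the base locus of any effective divisor $D'$ is $\max\{0,-c_I\cdot D'\}$, since $L_I$ is swept out by irreducible curves of class $c_I$ by \cref{who-sweeps-what}. This establishes \eqref{Weyl-mult} in the base case $w=\mathrm{id}$.

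Next, write $w=\phi_k\circ\cdots\circ\phi_1$ as a composition of standard Cremona transformations such that each partial composition sends $L_I$ to an effective cycle (this is guaranteed by the definition of the effective Weyl orbit, see \cref{weyl plane}). Since each $\phi_j$ is a birational pseudo-automorphism of $X^n_s$ and the number of global sections is preserved by the Weyl action \cite[Theorem 4]{dumnicki}, there is a natural identification between $|D|$ and $|w^{-1}(D)|$. Under this identification, the strict transform takes the base locus of $w^{-1}(D)$ to the base locus of $D$ away from the indeterminacy locus of $w$. In particular, the multiplicity of containment of the irreducible subvariety $W=w(L_I)$ in the base locus of $D$ equals the multiplicity of containment of $L_I$ in the base locus of $w^{-1}(D)$.

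Applying the base case to $w^{-1}(D)$, this multiplicity equals $\max\{0,-c_I\cdot w^{-1}(D)\}$. Since the intersection pairing between $\N_1(X^n_s)_{\mathbb R}$ and $\N^1(X^n_s)_{\mathbb R}$ is preserved by the Weyl action (recall \eqref{DM} and the discussion following it), we have
\[
c_I\cdot w^{-1}(D)=w(c_I)\cdot D,
\]
which yields \eqref{Weyl-mult}.

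The main obstacle is the bookkeeping needed to ensure that, at each intermediate step of the factorization of $w$, the image of $L_I$ remains effective and is not swallowed by the indeterminacy locus of the next Cremona transformation, so that the equivariance of strict transforms with base loci is valid. This is exactly what is guaranteed by working within the effective Weyl orbit, which is why the formula uses $w(c_I)$ as the curve class sweeping out $W$.
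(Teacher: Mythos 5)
Your proposal is correct and follows essentially the same route as the paper: reduce to the linear cycle $L_I$ via the known base locus lemma (the paper cites \cite[Proposition 4.2]{dumpos}, which covers arbitrary $s$, whereas your reference to \eqref{k_J} is stated in the $s\le n+3$ context, but the underlying formula for linear cycles is the same), transport the divisor by $w^{-1}$, and conclude by Weyl-invariance of the intersection pairing. Your added discussion of why the transport step is legitimate (effectivity at each intermediate stage, behaviour of strict transforms off the indeterminacy locus) is a welcome elaboration of a step the paper merely asserts.
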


\begin{proof}
  We first recall that if the  Weyl $r$-plane is linear $L_{I}$, the statement holds by the Base Locus Lemma \cite[Proposition 4.2]{dumpos}.

  Assume that $W$ is not a linear Weyl $r$-plane. The multiplicity of containment of the Weyl cycle $W=w(L_{I})$ in the base locus of an effective divisor $D$ is equal to the multiplicity of containment of the linear cycle $L_{I}$ in the base locus of the effective divisor $w^{-1}(D)$,
\begin{align*}
    k_{W}(D)&=k_{L_{I}}(w^{-1}(D))\\
  &=\max(-c_I\cdot w^{-1}(D),0)\\
  &=\max(-w(c_I)\cdot D,0)
  \end{align*}
  the last equality follows from the fact that intersection numbers are preserved by Cremona transformations.
\end{proof}

\begin{theorem} \label{Weyl-SBL}
The Weyl $r$-planes are stable base locus subvarieties of $X^n_s$.
\end{theorem}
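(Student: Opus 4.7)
The plan is to reduce the statement to the known fact that the strict transforms of linear subspaces $L_I$ are stable base locus subvarieties, and then transfer this along the Weyl element $w$ realizing $W=w(L_I)$.

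First I would unpack the definition: assuming that $D$ is an effective divisor with $W\subset \mathbb{B}(D)$, I must show that $W$ is actually an irreducible component of $\mathbb{B}(D)$, not merely contained in some larger component. By Lemma \ref{weyl base locus}, the hypothesis is equivalent to $-w(c_I)\cdot D>0$, where $c_I=rh-\sum_{i\in I}e_i$ is the sweeping curve class of $L_I$ and $I$ has length $r+1$. Equivalently, setting $D':=w^{-1}(D)$, which is still effective because the Weyl action preserves $\overline{\Eff}(X^n_s)$, we have $-c_I\cdot D'>0$, so $L_I\subset\mathbb{B}(D')$ with the same multiplicity $k_W(D)=k_{L_I}(D')$.

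Next I would invoke the linear version of the statement. The strict transforms $L_I$ are stable base locus subvarieties: this is precisely the content of the Base Locus Lemma \cite[Proposition~4.2]{dumpos} cited in the proof of Lemma \ref{weyl base locus}, coupled with the detailed analysis of base loci in \cite{BraDumPos3,BDP-Ciro}. Thus $L_I$ appears as an irreducible component of $\mathbb{B}(D')$.

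The final step is transferring the conclusion from $(L_I,D')$ back to $(W,D)$ via the birational automorphism $w$. The key point is that a standard Cremona transformation is an isomorphism away from its indeterminacy locus, and $W=w(L_I)$ is not contained in the exceptional locus of $w$ (otherwise $W$ would not be effective as a Weyl $r$-plane). Hence, at the generic point of $W$, the map $w$ restricts to a local isomorphism and identifies an analytic neighborhood of the generic point of $L_I$ in $X$ with one of $W$. In particular, if $W$ were contained in a strictly larger irreducible component $V\subset\mathbb{B}(D)$, then $w^{-1}(V)$ would give an irreducible subvariety of dimension $>r$ containing $L_I$ and contained in $\mathbb{B}(D')$, contradicting that $L_I$ is already a component of $\mathbb{B}(D')$.

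The main obstacle is this last step, i.e.\ tracking components of the stable base locus across the birational map $w$: Example \ref{eg} shows that the cones $\mathcal D_k$ and, more generally, the stable base locus are not preserved by the Weyl action, so the identification of components has to be done carefully at the level of generic points, away from the indeterminacy of $w$. Once this local compatibility is established, the contradiction is immediate and Theorem \ref{Weyl-SBL} follows.
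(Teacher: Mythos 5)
Your plan reaches the right conclusion but takes a longer and more delicate route than the paper. The paper's proof never transfers base-locus components across the Cremona map $w$: it uses Lemma \ref{weyl base locus} directly, which already gives the multiplicity of containment of $W$ \emph{itself} in the base locus of $D$ as $k_W(D)=\max(-w(c_I)\cdot D,0)$, and then simply observes that this expression is linear in the divisor, so $k_W(mD)=m\,k_W(D)>0$ for every $m>0$. Hence $W\subset \Bs(mD)$ for all $m$ and therefore $W\subset\mathbb{B}(D)$, with no need to compare $\mathbb{B}(D)$ and $\mathbb{B}(w^{-1}(D))$ at all. What the linearity argument buys is precisely the avoidance of the step you identify as the main obstacle.

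Two points in your version deserve caution. First, attributing ``$L_I$ is a stable base locus subvariety'' to the Base Locus Lemma of \cite{dumpos} alone is an overstatement: that lemma computes the multiplicity of containment of $L_I$ in $\Bs(D')$, and one still needs the linearity-in-$m$ observation (exactly the paper's argument) to pass from the base locus to the stable base locus; the stronger assertion that $L_I$ is an irreducible \emph{component} of $\mathbb{B}(D')$ requires the finer analysis of base loci in \cite{BraDumPos3,BDP-Ciro} and is not something the paper's own proof of this theorem relies on. Second, your transfer step --- deducing from $W\subsetneq V\subset\mathbb{B}(D)$ that $w^{-1}(V)\subset\mathbb{B}(D')$ --- needs the identification $\mathbb{B}(mD)\cap U=w(\mathbb{B}(mD')\cap U')$ on the locus $U$ where $w$ is an isomorphism, which in turn rests on the Weyl-invariance of $h^0$ (\cite[Theorem 4]{dumnicki}); this is salvageable at generic points since $W$ is not contained in the indeterminacy locus, but it is exactly the kind of bookkeeping that Example \ref{eg} shows can go wrong if done carelessly, and the paper's formulation via the sweeping curve class $w(c_I)$ packages it once and for all inside Lemma \ref{weyl base locus}.
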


\begin{proof}
Let $D$ be an effective divisor in $X^n_s$ and $W$ a Weyl $r$-plane, i.e. $W$ is an element of the effective Weyl orbit of a linear plane $L_{I}$.
Let $c=c_{I}$ be the curve class sweeping out the linear cycle $L_{I}$; then $w(c)$ sweeps out the Weyl cycle 
$W=w(L_{I})$.

Assume that the divisor $D$ contains $W$ in its base locus, so that $k_W(D)=-w(c)\cdot D>0.$ Consider an integer $m$ and notice that the Weyl cycle $W$ is in the  base locus of the divisor $mD$ because
\begin{align*} 
k_W(mD)&=-w(c)\cdot mD\\
&=-m\cdot(w(c)\cdot D)\\
&=m\cdot k_{W}(D)>0.
\end{align*}
\end{proof}

\begin{definition} \label{WSBL}
We define the \emph{Weyl (stable) base locus} of an effective divisor $D$ on $X^n_s$ to be the non-reduced subscheme of the stable base locus of $D$, such that any irreducible component is a Weyl $r$-plane. More explicitly, thanks to the previous results, the Weyl base locus of a divisor $D$ is:
$$
\bigcup_W k_W(D)W,
$$
where   $W$ is a Weyl $r$-plane and
$k_W(D)$ is the integer defined in \eqref{Weyl-mult}.
\end{definition}

We expect that the stable base locus and the Weyl base locus coincide for  $X^n_{n+4}$ and $X^n_{n+5}$, see Question \ref{quest}. For $X_{n+6}^n$ we know that the existence in the base locus of a quadric through nine points or a cone on it, for $n\ge 3$, is related to the speciality of linear systems (see e.g. \cref{ex-quadric}), hence we expect that the stable base locus may be different than the Weyl base locus.

\section{Cones of divisors, cones of curves and Weyl chamber decomposition for non-Mori dream spaces}
\label{section seven}
In this section we discuss our point of view on the birational geometry of non-Mori dream spaces of type $X^n_s$, that is based on the action of the Weyl group studied in Section \ref{section:infinite Weyl}. In particular,  after observing the remarkable role played by the anticanonical curve class in the description of the pseudoeffective cone of divisors, we propose a chamber decomposition of the latter, that is  governed by the action of the Weyl group of $X^n_s$. 

While observing that $F$ is a moving curve, or a limit of moving curves, for all Mori dream spaces $X^n_s$, we recall that the same holds for the non-Mori dream space $X^3_8$, following work of \cite{SX}, and we prove it for $X^5_9$.

We conclude the section by posing a few conjectures about chamber decompositions.

\subsection{The anticanonical curve class and the negative part of the pseudoeffective cone}
\label{section F}

The anticanonical curve class $F$, see Definition \ref{DEF anticanonical curve}, plays a fundamental role in  Coxeter theory, see \cite{DM3} for more details.

Recalling the Dolgachev-Mukai pairing \eqref{DM},  we say that a divisor class $D$ lies in the \emph{negative part} of the 
N\'eron-Severi space $N^1(X^n_s)_\R$ if $$\langle D,K_{X^n_s}\rangle\le0,$$ where $K_{X^n_s}$ denotes the canonical divisor class. 
Equivalently, we can say that a divisor lies in the negative part of $N^1(X^n_s)_\R$ 
if $D\cdot F\ge0,$
by using the intersection product \eqref{n+3: int pair}. For this reason we will denote by {$$F^\vee:=\{D: D\cdot F\ge 0\}\subset N^1(X^n_s)_\R$$}
the negative part of $N^1(X^n_s)_\R$.

The pseudoeffective cone is split in a negative part and a positive part. Like in the case of surfaces $X^2_s$, the negative part is easier to understand, while the positive part is wilder, see e.g.\ \cite{deFernex}.
Since $\mathcal{C}_0$, the closure of the cone of moving curves, is  dual to the pseudoeffective cone, we observe that the latter lies  in the negative part $F^\vee$ if and only if $F\in\mathcal{C}_0$, i.e. if $F$ is a limit of moving curves. 
Understanding for which values of $(n,s)$ the anticanonical curve class $F$ is moving, or a limit of moving curves, is therefore crucial. 

We start with an effectivity result.

\begin{proposition}\label{F-effective}
Let $X=X^n_s$ be the blown-up projective space at $s$ general points. Assume either $s\leq 2n+2$ and $n\geq 3$, or $s\leq 9$ and $n=2$. Then the anticanonical class $F$ is effective.  
\end{proposition}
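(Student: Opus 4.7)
The plan is to exhibit an explicit effective $1$-cycle of class $F=(n+1)h-\sum_{i=1}^s e_i$, constructed from strict transforms of lines in $\PP^n$.

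Assume first that $n\ge 3$. I would set $m:=\lfloor s/2\rfloor$ and $\epsilon:=s-2m\in\{0,1\}$, and partition the index set $\{1,\dots,s\}$ into $m$ disjoint pairs $\{i_a,j_a\}$, together with a singleton $\{k\}$ when $\epsilon=1$. The line of $\PP^n$ joining $p_{i_a}$ and $p_{j_a}$ has strict transform of class $h-e_{i_a}-e_{j_a}$; the strict transform of a general line through $p_k$ (if $\epsilon=1$) has class $h-e_k$; and the strict transform of any line of $\PP^n$ disjoint from the $s$ blown-up points has class $h$. Summing the $m$ pair-lines, the possible singleton-line, and $n+1-m-\epsilon$ free general lines yields an effective $1$-cycle of class
\[
(m+\epsilon)h+(n+1-m-\epsilon)h-\sum_{i=1}^s e_i=(n+1)h-\sum_{i=1}^s e_i=F,
\]
and the number $n+1-m-\epsilon=n+1-\lceil s/2\rceil$ of free summands is non-negative precisely under the hypothesis $s\le 2n+2$.

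For the remaining case $n=2$, I would use that on a surface the group of $1$-cycles and the Picard group coincide, so that under this identification $F=3h-\sum_{i=1}^s e_i$ is exactly the anticanonical class $-K_{X^2_s}$. Its effectivity for $s\le 9$ general points of $\PP^2$ is classical: plane cubics form a linear system of projective dimension $9$, hence remain non-empty after imposing passage through at most $9$ general points.

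I do not anticipate any serious technical obstacle: the argument is a direct construction plus one appeal to a classical dimension count. The only mildly delicate point is noticing that the line-decomposition argument only reaches $s\le 2n+2$, which for $n=2$ is the bound $s\le 6$; the remaining range $s\in\{7,8,9\}$ for $n=2$ must be handled separately via the anticanonical cubic.
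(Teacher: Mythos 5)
Your proof is correct and follows essentially the same route as the paper: for $n\ge 3$ the paper writes $F=\sum_{i=1}^{n+1}(h-e_{2i-1}-e_{2i})$ on $X^n_{2n+2}$ (your decomposition into pair-lines, a possible singleton line, and free lines is the same construction, just stated for all $s\le 2n+2$ at once), and for $n=2$ it likewise invokes the plane cubic through at most nine general points. No gaps.
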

\begin{proof} {It is enough to prove the statement for $X^2_9$ and for $X^n_{2n+2}$, for $n\ge3$.}

{In $X^2_9$,  $F$ is  the unique elliptic curve passing through nine points in general position.}

In $X=X^n_{2n+2}$, the  curve class $F$ can be written as a  nonnegative linear combination   {of lines through two points as follows:}
 $$F=\sum_{i=1}^{n+1}(h-e_{2i-1}-e_{2i}).$$
\end{proof}

Notice that, for $n=2$ and $s\ge10$, the anticanonical divisor $F$ is not effective.

In the next  result we prove that $F$ lies in $\mathcal{C}_0$ for every Mori dream space $X^n_s$, as well as in a few sporadic non-Mori dream cases, that occur for small values of $s$.

\begin{proposition}\label{mds}
Assume that either $X=X^n_s$ is a Mori dream space (i.e. it falls in the list \eqref{list MDS}), or  $X\in\{X^2_9,X^3_8,X^5_9\}$. If $D\in\Pic(X)$ is an effective divisor, then 
$$D\cdot F\geq 0.$$
\end{proposition}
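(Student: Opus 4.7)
The plan is to reduce the inequality $D \cdot F \geq 0$ to checking it on generators of the extremal rays of $\overline{\Eff}(X)$ and then to argue case by case. A key uniform observation is that since $F$ is $W^n_s$-invariant and the Weyl action preserves the intersection pairing, every Weyl divisor $D = w(E_i)$ satisfies
\[
D \cdot F \;=\; w(E_i) \cdot F \;=\; E_i \cdot w^{-1}(F) \;=\; E_i \cdot F \;=\; 1,
\]
and analogously $w(H) \cdot F = H \cdot F = n+1$.

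For Mori dream spaces in \eqref{list MDS}, the introduction recalls that the extremal rays of $\overline{\Eff}(X^n_s)$ are Weyl orbits of exceptional divisors when $s > n+1$, and are spanned by $E_1, \ldots, E_{n+1}$ together with $H$ when $s = n+1$. The uniform observation thus handles all Mori dream cases at once, giving $D \cdot F \in \{1, n+1\}$ on every generator.

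For $X^2_9$, the class $F = -K_{X^2_9}$ is represented by an irreducible elliptic curve under the general position assumption (cf.\ Proposition \ref{F-effective}). Hence for every irreducible effective divisor $C \neq F$ one has $C \cdot F \geq 0$ as the intersection of two distinct irreducible curves on a smooth surface, while $F \cdot F = 9 - 9 = 0$; extending by linearity over effective decompositions yields the claim. The case $X^3_8$ follows from the explicit description of the movable and effective cones in \cite{SX}, as announced in the introduction.

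The remaining case $X^5_9$ is the main obstacle. Here $\overline{\Eff}(X^5_9)$ has infinitely many extremal rays, so no finite enumeration is available. The strategy is to combine the Weyl-invariance observation above with the boundary identity
\[
-K_{X^5_9} \cdot F \;=\; (n+1)^2 - s(n-1) \;=\; 36-36 \;=\; 0,
\]
which reflects that $X^5_9$ sits on the borderline $\langle -K,-K\rangle = (n-1)(-K\cdot F)=0$ between MDS and non-MDS behaviour. The plan is to show that every extremal ray of $\overline{\Eff}(X^5_9)$ either lies in a Weyl orbit of some $E_i$ (whence $D\cdot F = 1$) or lies along the anticanonical direction $-K_{X^5_9}$ (whence $D\cdot F = 0$). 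The hard part is precisely this structural statement about extremal rays, since the infinite Weyl group prevents a combinatorial classification; I expect to prove it via a Gale-type correspondence between $X^5_9$ and $X^2_9$ that transfers the nefness of $-K_{X^2_9}$ obtained above across the two varieties, in the spirit of the $X^2_8\leftrightarrow X^4_8$ duality used in Section \ref{gale section}.
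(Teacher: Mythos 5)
Your treatment of the Mori dream space cases, $X^2_9$ and $X^3_8$ is essentially the paper's: the observation that every Weyl divisor pairs to $1$ with the Weyl-invariant class $F$ is exactly how the paper disposes of all Mori dream cases, and for $X^2_9$ and $X^3_8$ the paper likewise reduces to the known generators of the pseudoeffective cone (Nagata/de Fernex, resp.\ \cite{SX}); your direct nefness argument for $-K_{X^2_9}$ via irreducibility of the elliptic curve is a fine alternative there. One slip: for $s=n+1$ the extremal rays of $\overline{\Eff}(X^n_{n+1})$ are not spanned by $E_1,\dots,E_{n+1}$ and $H$, but by the $E_i$ together with the strict transforms $H-\sum_{i\neq j}E_i$ of the coordinate hyperplanes. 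These are still Weyl divisors (each equals $\Cr_{\{1,\dots,n+1\}}(E_j)$), so your uniform computation still covers them, but as written the set you propose to check does not generate the effective cone.

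The genuine gap is $X^5_9$, the only case where the proposition says something new, and there your proposal stops at a plan rather than a proof. Moreover, the structural statement you intend to establish --- that every extremal ray of $\overline{\Eff}(X^5_9)$ is either a Weyl divisor or the anticanonical ray --- is not available (the paper only asserts that the Weyl divisors are \emph{among} the infinitely many extremal rays), and no Gale-type transfer between $X^5_9$ and $X^2_9$ is set up anywhere in the paper. The paper's actual argument avoids any classification of extremal rays: it proves the single claim that an effective $D$ with $D\cdot F=0$ must be proportional to $-K_X$. Since $F$ is Weyl-invariant and the pairing is Weyl-equivariant, one may assume $D=dH-\sum_i m_iE_i$ is Cremona reduced, i.e. $4d\ge\sum_{i\in I}m_i$ for all $|I|=6$; combined with $D\cdot F=0$, which reads $6d=\sum_i m_i$, this gives $2d\le m_{i_1}+m_{i_2}+m_{i_3}$ for every triple of indices, and summing over three disjoint triples forces equality throughout, hence all $m_i$ are equal and $D$ is proportional to $3H-2\sum_i E_i$. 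Convexity of the effective cone then pins $\overline{\Eff}(X^5_9)$ into the half-space $F^\vee$: an effective divisor with $D\cdot F<0$ would combine with an exceptional divisor to produce an effective class on $F^\perp$ off the anticanonical ray, a contradiction. You should replace your $X^5_9$ plan with an argument of this kind; as it stands that case is unproved.
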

\begin{proof}
Notice that if $D$ is a Weyl divisor on $X^n_s$, then $D\cdot F=1$. In fact, since the intersection pairing is preserved under the Weyl action and $F$ is Cremona invariant, 
then for every Weyl divisor $D$ there exists $w\in W^n_s$ with $w(D)=E_i$, for some $i$. Hence, we have $D\cdot F=E_i\cdot F=1$.
Since for all Mori dream spaces $X^n_s$,  Weyl divisors generate the effective cone, we conclude that $D\cdot F\ge 1$ for every effective divisor $D$.

Consider now $X^2_9$, $X^3_8$. 
The pseudoeffective cone in these cases  is generated by the Weyl divisors and by the anticanonical divisor class  \eqref{anticanonical divisor}. 
This is proved in \cite{nagata} (see also \cite{deFernex})  for $X^2_9$ and
in \cite[Theorem 6.5]{SX} for
$X^3_8$.
Since in both cases $-K_X\cdot F=0$, we conclude again that $D\cdot F\ge 0$ for every effective divisor.

Finally, we prove the statement for $X=X^5_9$.
In this case, we show that $D$ is effective and $D\cdot F=0$ if and only if $D$ lies in the ray generated by the anticanonical divisor $-K$. Accepting this claim, the effective cone of $X^5_9$ is contained in the halfspace $F^\vee=\{D\in \N^1(X)_\mathbb{R}| D\cdot F\ge 0\}$ or in the halfspace $\{D\in \N^1(X)_\mathbb{R}| D\cdot F\le 0\}$, and so does its closure, the pseudoeffective cone. Now, since there are effective divisors lying in $F^\vee$, e.g. the exceptional divisors, then we conclude that $\overline{\Eff}(X^5_9)\subset F^\vee$ and that the only intersection with $F^\perp$ consists of the ray spanned by the antincanonical divisor.

We now prove the claim. That $D=m(-K)$ is effective and has null intersection with $F$ is obvious, so we prove the converse statement.
Set $D=dH-\sum_{i=1}^9m_i E_i$; the condition $D\cdot F=0$ reads
\begin{equation}\label{DF=0}
6d=m_1+\cdots+m_9.
\end{equation}
Now, since the intersection number is preserved by the Weyl group action, and since the curve $F$ is Weyl invariant, we have $w(D)\cdot F=D\cdot F$, where $w\in W^5_9$, therefore we may assume that $D$ is Cremona reduced, i.e. that
\begin{equation}\label{cremonareduced}
    4d\ge \sum_{i\in I}m_i, \quad \forall I\subseteq\{1,\dots,9\}, |I|=6.
\end{equation}
Now, using \eqref{DF=0} and \eqref{cremonareduced}, we obtain
\begin{equation}\label{cremonacomplement}
2d\le \sum_{i\notin I}m_i,  \quad \forall I\subseteq\{1,\dots,9\}, |I|=6.
\end{equation}
Now, using \eqref{DF=0} and \eqref{cremonacomplement}, we obtain
$$
0=\sum_{i=1}^9m_i-6d=(m_1+m_2+m_3-2d)+(m_4+m_5+m_6-2d)+(m_7+m_8+m_9-2d)\ge0,
$$
from which, permuting indices, we obtain that $m_{i_1}+m_{i_2}+m_{1_3}=2d$, for every three distinct indices $i_1,i_2,i_3$, so $m_1=\cdots=m_9$. From this, we obtain that $6d=9m_1$ hence $D$ is a multiple of $3H-2\sum_{i=1}^9E_i$. Since $-K_X$ is Cremona invariant we conclude the proof.
\end{proof}

\begin{corollary}
Let $X$ as in Proposition \ref{mds}. Then the anticanonical curve $F$ is a limit of moving curves in $X$.
\end{corollary}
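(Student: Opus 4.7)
The plan is to derive the corollary as an essentially immediate consequence of Proposition \ref{mds}, via the duality between the cone $\mathcal{C}_0$ of (limits of) moving curves and the pseudoeffective cone of divisors. Recall that this duality, established in \cite[Theorem 0.2]{BDPP} and already invoked in the excerpt, asserts that
\[
\mathcal{C}_0 \;=\; \overline{\Eff}(X)^\vee.
\]
Thus to conclude that $F \in \mathcal{C}_0$, and hence that $F$ is a limit of moving curves by Definition \ref{def Ck}, it suffices to verify that $D\cdot F \ge 0$ for every pseudoeffective divisor class $D$.

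The key step is to promote the inequality from effective divisors, which Proposition \ref{mds} provides, to arbitrary pseudoeffective classes. This is a standard continuity argument: since $\overline{\Eff}(X)$ is the closure of the cone spanned by effective divisor classes, any $D \in \overline{\Eff}(X)$ is a limit of non-negative combinations of effective divisor classes, and the intersection pairing $(-)\cdot F$ is a continuous linear functional on $\N^1(X)_\R$. Hence $D\cdot F = \lim D_k \cdot F \ge 0$ for any approximating sequence $\{D_k\}$ provided by Proposition \ref{mds}.

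Combining these two observations gives $F \in \overline{\Eff}(X)^\vee = \mathcal{C}_0$, which is the definition of $F$ being a limit of moving curve classes. I do not expect any serious obstacle: the corollary is really a reformulation of Proposition \ref{mds} via BDPP duality, and the only genuine content has already been verified case by case (Mori dream space cases, $X^2_9$, $X^3_8$, $X^5_9$) in the proof of the proposition. The only subtlety worth flagging explicitly is that being a \emph{limit} of moving curves (i.e.\ lying in $\mathcal{C}_0$) is weaker than being moving itself, so the corollary does not claim that $F$ is a moving class — and indeed, for the non-Mori dream cases such as $X^5_9$, the anticanonical class $F$ is expected to lie on the boundary of $\mathcal{C}_0$, as suggested by the fact that $-K_X \cdot F = 0$ there.
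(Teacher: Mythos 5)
Your argument is correct and is essentially identical to the paper's proof: both pass from Proposition \ref{mds} to $\overline{\Eff}(X)\subset F^\vee$ by taking closures, and then invoke the BDPP duality $\mathcal{C}_0=\overline{\Eff}(X)^\vee$ to conclude $F\in\mathcal{C}_0$. Your closing remark that $F$ lies on the boundary of $\mathcal{C}_0$ in the non--Mori dream cases also matches the paper, which notes that $F$ spans an extremal ray of $\mathcal{C}_0$ there.
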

\begin{proof} 
From Proposition \ref{mds} we obtain that the effective cone lies completely in the negative part of $N^1(X)_\R$. Moreover, if $X\in\{X_9^2, X_8^3, X_9^5\}$, then $\Eff(X)$ shares just one ray (the anticanonical ray) with the hyperplane $F^\perp$. Taking the closure, we conclude that $\overline{\Eff}(X) \subset F^\vee$. This implies that $F$ is in $\mathcal C_0$, which is the cone dual to the pseudoeffective cone.  In particular, if $X$ is a MDS, then $F$ lies in the interior of $\mathcal C_0$, whereas if $X\in \{X_9^2, X_8^3, X_9^5\}$, then $F$ spans an extremal ray of $\mathcal C_0$.
\end{proof}

For higher values of $s$, the curve $F$ is not moving in general, as proved in the following result.
\begin{proposition}\label{prop n+6}
Assume $X=X^n_{n+6}$ and $n\ge3$ or $X=X^n_{n+5}$ and $n\ge4$. Then there are effective divisors lying  strictly in the positive part of the pseudoeffective cone of $X$. In particular the anticanonical curve $F$ is not  a limit of moving curve classes in $X$, i.e. $F\notin\mathcal{C}_0$.
\end{proposition}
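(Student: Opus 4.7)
The plan is to exhibit in each case an explicit effective divisor $D$ on $X$ with $D \cdot F < 0$. Once such a $D$ is found, the conclusion follows at once: by the duality between $\overline{\Eff}(X)$ and $\mathcal C_0$ (theorem of Boucksom--Demailly--P\u aun--Peternell, \cite[Theorem 0.2]{BDPP} cited in Section~\ref{section-strongduality-P^n}), a strictly negative pairing $D \cdot F < 0$ places $D$ in the positive part of the pseudoeffective cone and forces $F \notin \mathcal C_0$.

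For $X = X^n_{n+6}$ with $n \geq 3$, I would take $D$ to be the strict transform of the quadric cone in $\PP^n$ whose vertex is the $(n-4)$-plane spanned by $p_1,\ldots,p_{n-3}$ (the empty flat when $n=3$) and whose base is the unique quadric in a complementary $\PP^3$ through the remaining nine points. Such a base quadric exists and is unique because quadrics in $\PP^3$ form a linear system of dimension exactly $9$. The class of $D$ is
\[
D \;=\; 2H \,-\, 2(E_1 + \cdots + E_{n-3}) \,-\, (E_{n-2} + \cdots + E_{n+6}),
\]
and a direct application of \eqref{n+3: int pair} gives $D \cdot F = 2(n+1) - 2(n-3) - 9 = -1$.

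For $X = X^n_{n+5}$ with $n \geq 4$ the analogous cone fails: forcing $D \cdot F < 0$ would require the base to be a conic in $\PP^2$ through $7$ general points, which does not exist. Instead, I would exhibit $D$ in the form $m(-K_X) + \sum_{i \in S} E_i$ for a suitable $m \geq 1$ and $S \subseteq \{1,\dots,n+5\}$. Since $E_i \cdot F = 1$ and $-K_X \cdot F = 6 - 2n$, one has $D \cdot F = m(6-2n) + |S|$, which is made negative by choosing $|S| < m(2n-6)$. For the prototypical case $n = 4$, the choice $m = 2$, $|S| = 3$ gives the explicit class
\[
D \;=\; 10H \,-\, 6(E_1 + \cdots + E_6) \,-\, 5(E_7 + E_8 + E_9),
\]
for which $D \cdot F = -1$ and the virtual dimension of $|D|$ is $\binom{14}{4} - 1 - 6\binom{9}{4} - 3\binom{8}{4} = 34$, so $|D|$ is non-empty under non-speciality. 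Analogous explicit choices (adjusting $m$ and $|S|$) supply the required effective divisor for each remaining $n$.

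The main obstacle is the effectivity step in the $X^n_{n+5}$ regime: unlike the $X^n_{n+6}$ construction, which realises $D$ as an a priori effective classical geometric object, the divisors $m(-K_X) + \sum_{i\in S} E_i$ are witnessed only through virtual-dimension computations, so their effectivity relies on ruling out speciality of the corresponding linear system, i.e.\ on a higher-dimensional Segre--Gimigliano--Harbourne--Hirschowitz-type statement. This non-speciality must be checked case by case for the specific multiplicity data used, and is the delicate ingredient that separates the $X^n_{n+5}$ case from the cleaner $X^n_{n+6}$ case.
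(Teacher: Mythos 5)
Your reduction to exhibiting an effective $D$ with $D\cdot F<0$ is exactly the paper's logic, and your $X^n_{n+6}$ construction (the cone with vertex the $(n-4)$-plane through $n-3$ of the points over the quadric surface through the other nine) is, up to relabelling, precisely the divisor $2H-\sum_{i=1}^9E_i-2\sum_{i=10}^{n+6}E_i$ used in the paper. The base case $X^4_9$ is also fine: your class $10H-6(E_1+\cdots+E_6)-5(E_7+E_8+E_9)$ has non-negative virtual dimension, and since an assigned point of multiplicity $m_i$ imposes at most $\binom{m_i+n-1}{n}$ linear conditions on forms of degree $d$, non-negative virtual dimension already forces $h^0(D)>0$ unconditionally. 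This means your declared ``main obstacle'' is not one: speciality can only \emph{increase} the dimension of a linear system, so no SHGH-type non-speciality input is needed for mere non-emptiness. (The paper's own choice on $X^4_9$ is $5H-3\sum_{i=1}^8E_i-2E_9=-K+E_9$, whose affine virtual dimension is $1$.)

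The genuine gap is $X^n_{n+5}$ for $n\ge5$, which you dispose of with ``analogous explicit choices (adjusting $m$ and $|S|$)''. This is not automatic. For the family $m(-K_X)+\sum a_iE_i$ one must verify, for every $n$, that the parameters can keep the virtual dimension non-negative while $D\cdot F<0$, and the two requirements pull against each other: the comparison of $\binom{2n+1}{n}$ with $n+5$ points of multiplicity roughly $n-2$ is tight, and for $m=1$ the count already goes negative around $n=14$ no matter how the $a_i$ are distributed, so one would have to let $m$ grow with $n$ and prove an asymptotic inequality. The paper avoids all of this by the same cone trick you used for $n+6$: having the single effective divisor $D'=5H-3\sum_{i=1}^8E_i-2E_9$ on $X^4_9$ with $D'\cdot F=-1$, it takes the cone over $D'$ with vertex the $n-4$ extra points, i.e.\ $5H-3\sum_{i=1}^8E_i-2E_9-5\sum_{i=10}^{n+5}E_i$; effectivity is then automatic (a cone over an effective divisor is effective) and the pairing with $F$ is still $-1$. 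Replacing your ``analogous choices'' by this cone construction closes the gap with no further computation.
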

\begin{proof}
We consider the case $X=X^n_{n+6}$ and $n\ge3$. Assume first $n=3$ and
consider the  quadric  of  $X^3_9$, given by $D=-\frac{1}{2}K_{X^3_9}=2H-\sum_{i=1}^9E_i$.  
For $n\ge 4$, by taking cones  over the quadric, we have effective divisors of the form $D=2H-\sum_{i=1}^9E_i-2\sum_{i=10}^{n+6}E_i$. In both cases 
 $D\cdot F=-1$, hence $D$ lies in the positive part of the pseudoeffective cone.

Now we consider the case 
$X=X^n_{n+5}$ and $n\ge4$.
The divisor $D=5H-3\sum_{i=1}^8E_i-2E_9$ is effective on $X^4_9$, because $h^0(D)= 1 $ and we have $D\cdot F=-1$. 
By taking cones over $D$, we obtain effective divisors that intersect negatively $F$ in $X=X^n_{n+5}$, for any $n\ge 5$.
\end{proof}

The only remaining {open} cases are the following.
\begin{question}\label{que74}
Assume that $X=X^n_{n+4}$, for $n\ge 6$. 
Is the anticanonical curve class $F$ on $X$ not  a limit of moving curves? 
\end{question}

\subsection{Weyl chamber decomposition}\label{WCD section}

In this section we propose  an application of the study of Weyl $r$-planes as stable base locus subvarieties of $X^n_s$.

Recalling  \eqref{Cremona zero} and 
using the notation of Lemma \ref{weyl base locus}, we consider the set of curve classes on $X^n_s$: 
$$\{w(c_I): w(c_I)\in (W^n_s\cdot c_I)^+, \; I\subset \{1,\dots,s\}, \; 2\le |I|\le n\}.$$
Recall also that each such curve sweeps out the Weyl $r$-plane $w(L_I)$.
We consider the (infinite) hyperplane arrangement in the N\'eron-Severi space given by 
\begin{equation}\label{Weyl-hyperplane-arrangement}
\{D\cdot w(c_I)=0: w(c_I)\in (W^n_s\cdot c_I)^+, I\subset \{1,\dots,s\}, 2\le |I|\le n\}.\end{equation}

By Lemma \ref{weyl base locus}, each such hyperplane splits the pseudoeffective cone of divisors in two regions, separating the divisors $D$ that contain $w(L_I)$ in their stable base locus, from those that do not.
This hyperplane arrangement induces a wall-and-chamber decomposition of the pseudoeffective cone of divisors (and of any cone $\mathcal{D}_k$), where each chamber has the property that divisors lying in the interior of the chamber have the same reduced Weyl stable base locus, cf. \cref{WSBL}. 
\begin{definition}
The \emph{Weyl chamber decomposition} is the wall-and-chamber decomposition of the pseudoeffective cone of divisors of $X^n_s$ induced by the hyperplane arrangement \eqref{Weyl-hyperplane-arrangement}. We say that two effective divisors are {\it Weyl equivalent} if they lie in the interior of the same Weyl chamber.
\end{definition}

Since the Weyl stable base locus scheme is contained in the stable base locus, the Weyl chamber decomposition ($\WCD$) can be refined to the {stable base locus decomposition} ($\SBLD$). 

We recall that for Mori dream spaces of type $X^n_s$, the WCD, the SBLD and the Mori chamber decomposition (MCD)  of the effective cone coincide, see e.g. \cite[Chapter 2]{BCP} and \cite[Theorem 5.7]{BDPS}, showing how the action of the Weyl group $W^n_s$  on curves governs completely the birational geometry of these spaces. In fact the following holds.
\begin{enumerate}
    \item  All extremal rays of the effective cone are Weyl $(n-1)$-planes, as proved by Mukai.
    \item All small $\mathbb{Q}$-factorial modifications are sequences of flips of (varieties birational to) Weyl $r$-planes, with $r<n-1$;  each such subvariety is swept out by a curve class in the orbit of $c_\mathcal I$ for some $\mathcal I\subset\{1,\ldots,s\}$ of length $r+1$, as proved in \cite{BDPS}.
    \item On the dual side, the extremal rays of all the cones of $r$-moving curves are either $0$-moving, $1$-moving, or $r$-moving in a Weyl $(n-r)$-plane, see Theorems \ref{strong-duality-thm-n+3},  \ref{strong-duality-thm-X^4_8} and \ref{strong-duality-thm-X^3_7}.
    \item The cone of $0$-moving curves in $X^n_s$, that is the dual of the effective cone of divisors, is given by $0$-Weyl lines and $1$-Weyl lines \cite{DM2}.
\end{enumerate}

We now turn our attention to the case of non-Mori dream spaces of type $X^n_s$. In this case even the description of the pseudoeffective cone of divisors seems out of reach for most cases (when $F$ is not in $\mathcal{C}_0$).
Even more, we do not have a definition of Mori equivalence as in \cite{Hu-Keel} because the section rings of $\Q$-Cartier divisors might not be finitely generated. In spite of this, we may ask whether an extension of the notion of Mori chamber decomposition of the pseudoeffective cone of $X^n_s$ (negative part) can be achieved, i.e. a \emph{nef chamber decomposition} where chambers would be pull-backs of  nef cones of infinitely many rational contractions
of $X^n_s$. More precisely,
a chamber is realised as $f^\ast \textrm{Nef}(Y)\ast \textrm{ex}(f)$, where $f:X\dasharrow Y$ is a finite sequence of flips of subvarieties of $X$ followed by a divisorial contraction.

The first open cases occur for {$X^3_8$ and $X^n_{n+4}$, for $n\ge 5$}. 

\subsection{Open questions on cones of divisors}
\label{Xn+4 section}

Recall that all divisors in the interior of a Weyl chamber share the same (reduced) Weyl stable base locus scheme, whose components form a poset, where the order is given by inclusion of varieties and the minimal elements are Weyl $1$-planes.
We assume that we may flip all the components, following the inclusion order and starting from the curves. In particular we are assuming that \cite[Conjecture 2.2]{DM2} holds, i.e. that all Weyl $1$-planes are disjoint in $X^n_s$ and, moreover, that the birational images of the Weyl $r$-planes in the poset are disjoint after having flipped the Weyl $(r-1)$-planes, for every $r\ge2$.

We first focus on the cases $X^5_9$ and $X^3_8$, where  the pseudoeffective cone has  only a negative part, see Proposition \ref{mds}.
We propose the following conjecture. 

\begin{conjecture}\label{WCD-X59}
Let $X=X^5_9$ or $X=X^3_8$.
Then
\begin{enumerate}
\item $
\WCD(\overline{\Eff}(X))= 
\SBLD(\overline{\Eff}(X)). 
$
\item Each Weyl chamber is of the form $f^\ast \textrm{Nef}(Y)\ast \textrm{ex}(f)$, where $f:X\dasharrow Y$ is a rational contraction.
\end{enumerate}
\end{conjecture}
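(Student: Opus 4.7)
The approach is to prove both parts simultaneously by showing that the birational geometry of $X\in\{X^3_8, X^5_9\}$ is governed by the poset of Weyl $r$-planes contained in the stable base loci of divisors. I would first establish that $\WCD$ and $\SBLD$ agree, and then exploit this combinatorial control to build explicit rational contractions realizing each chamber.

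For part (1), the inclusion that $\SBLD$ refines $\WCD$ is automatic from the definition, since each wall of $\WCD$ records the transition where containment of one specific Weyl $r$-plane is lost. The content is the reverse direction: every irreducible component of $\mathbb{B}(D)$ is a Weyl $r$-plane, for every effective $D$ on $X$. I would proceed by induction on codimension. The divisorial base locus case follows from the description of $\overline{\Eff}(X)$: for $X^3_8$ by \cite[Theorem 6.5]{SX} and for $X^5_9$ by the argument underlying Proposition \ref{mds}, the extremal rays are spanned by Weyl divisors together with (a multiple of) $-K_X$, so writing $D$ as a non-negative combination and using \cite[Proposition 2.3]{BDP-TAMS} forces divisorial components of $\mathbb{B}(D)$ to be Weyl divisors. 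For the inductive step I would extend the Weyl cycle theory of Section \ref{sec WC} beyond the Mori dream setting: the intersection analysis of Propositions \ref{thm: int ort} and \ref{joins are weyl cycles} should still force pairwise orthogonal Weyl divisors to meet along unions of Weyl $r$-planes, now in possibly infinite orbits.

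A companion finiteness statement should be verified to make each chamber locally cut out by finitely many walls: for each effective $D$, only finitely many Weyl $r$-planes $W$ satisfy $k_W(D)>0$. By Lemma \ref{weyl base locus} this is equivalent to finiteness of $w(c_I)\in(W^n_s\cdot c_I)^+$ with $w(c_I)\cdot D<0$. Using Proposition \ref{mds}, which gives $\overline{\Eff}(X)\subset F^\vee$, together with the fact that $F$ is a (limit of) moving curve, one bounds the $h$-degree of such $w(c_I)$ in terms of $D\cdot F$, and the integrality of the Picard lattice produces the desired finiteness.

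For part (2), given a Weyl chamber $\mathcal C$ I would build the rational contraction $f\colon X\dasharrow Y$ by an inductive flipping procedure. Pick a representative $D$ in the interior of $\mathcal C$ and let $\mathcal P_r(D)$ be the finite set of Weyl $r$-planes in $\mathbb{B}(D)$, for $1\le r\le n-1$. Assuming the disjointness condition of \cite[Conjecture 2.2]{DM2} and its extension to higher $r$, flip the elements of $\mathcal P_1(D)$ to obtain $X\dasharrow X_1$, then flip the strict transforms of $\mathcal P_2(D)$, and continue up to $r=n-2$; finally contract the divisorial Weyl components of $\mathbb{B}(D)$ to obtain $Y$. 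The transitions across the walls of $\mathcal C$ correspond precisely to the failure of one of these birational operations to go through, so $\overline{\mathcal C}=f^*\Nef(Y)\ast\mathrm{ex}(f)$.

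The main obstacle I expect is the rigorous description of $\overline{\Eff}(X^5_9)$ and the classification of its extremal rays: unlike the $X^3_8$ case, where \cite{SX} provides an explicit list, for $X^5_9$ only the containment $\overline{\Eff}(X^5_9)\subset F^\vee$ is currently available, and a full analog of \cite[Theorem 6.5]{SX} seems to require new input, likely via a sharper analysis of Cremona-reduced divisors and the Dolgachev-Mukai pairing. A secondary obstacle is the verification that the inductive flipping procedure in part (2) is well-defined, since this rests on the conjectural disjointness of strict transforms of Weyl $r$-planes after each flipping stage.
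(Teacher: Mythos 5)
The statement you are asked to prove is stated in the paper as Conjecture \ref{WCD-X59}; the paper offers no proof and explicitly records that it is ``completely open for $X^5_9$'' and that even for $X^3_8$ a proof would require extending the results of \cite{SX} from the movable cone to the whole pseudoeffective cone. Your proposal is therefore necessarily a research program rather than a proof, and indeed each of its main steps rests on an input that is itself open. Concretely: (i) the reverse inclusion in part (1) requires that every component of $\mathbb{B}(D)$ be a Weyl $r$-plane for every effective $D$, which is exactly the open implication (3)$\Rightarrow$(1) of Theorem \ref{main} for non-Mori dream spaces, posed as Question \ref{quest}(1); your plan to ``extend the Weyl cycle theory of Section \ref{sec WC}'' does not go through as stated, because Propositions \ref{thm: int ort} and \ref{joins are weyl cycles} depend on the explicit classification of Weyl $r$-planes as joins $J(L_I,\sigma_t)$, which is only valid for $s\le n+3$ -- for $X^3_8$ and $X^5_9$ the orbits are infinite (Theorem \ref{infinity}) and no analogous classification of the cycles, or of intersections of orthogonal Weyl divisors, is available. (ii) Your treatment of the divisorial base locus for $X^5_9$ presumes that the extremal rays of $\overline{\Eff}(X^5_9)$ are Weyl divisors together with $-K_X$; Proposition \ref{mds} only yields $\overline{\Eff}(X^5_9)\subset F^\vee$ and that the anticanonical ray is the unique ray on $F^\perp$, and the classification of the remaining extremal rays is precisely Question \ref{question neg-eff nonmds}(1). (iii) Part (2) is conditional on \cite[Conjecture 2.2]{DM2} and its higher-dimensional analogue, which you acknowledge but which the paper also treats only as a working assumption.

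On the positive side, your identification of which inclusions are automatic (that $\SBLD$ refines $\WCD$), your isolation of the divisorial case via the extremal-ray description and \cite[Proposition 2.3]{BDP-TAMS}, and your proposed chamber-by-chamber flipping construction are all consistent with the heuristic picture the paper sketches in Sections \ref{WCD section}--\ref{Xn+4 section}. One small divergence worth noting: the finiteness of the set of Weyl $r$-planes in the base locus of a fixed $D$ is obtained in the paper directly from the Lasker--Noether theorem (components of a base locus are finite in number), which is cleaner than your proposed degree bound via $D\cdot F$; the latter is in any case delicate because $F$ lies on the boundary of $\mathcal{C}_0$ for these two spaces, so $D\cdot F$ can vanish without forcing any bound.
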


\begin{remark}
Recall that the nef cones of $X^3_8$ and of $X^5_9$ are rational polyhedral, see e.g. \cite{CLO}and \cite{DP-positivityI}.
Since, by Theorem \ref{infinity}, there are infinitely many Weyl $2$-planes and divisors, then the Weyl chamber decompositions, in both cases, consists of infinitely many chambers. 
It would be interesting to know whether all such
chambers are polyhedral or if some of them are not.

Stenger and Xie in \cite[Proposition 1.1]{SX} proved an existence result for a subdivision of the effective movable cone of $X^3_8$ into nef chambers. It would be interesting to see if the latter can be described explicitly in terms of Weyl chambers and if their result can be extended to the whole pseudoeffective cone of  $X^3_8$; this would  provide a proof of  Conjecture \ref{WCD-X59} for $X^3_8$. To the best of our knowledge, the conjecture is completely open for $X^5_9$.
\end{remark}

Less is known for $X^n_{n+4}$, with $n\ge 5$. For instance, we do not know whether the pseudoeffective cone has a positive part for $n\ge6$, see \cref{que74}.
For $s\ge n+5$ and $n\ge3$, excluding $X^3_8$, the pseudo-effective cone certainly has a positive part, see Proposition \ref{prop n+6}.
For this reason, we pose the following question.

\begin{question} \label{question neg-eff nonmds}
Assume that $X=X^n_{s}$ is not a Mori dream space that $n\ge 3$, $s\ge n+4$ and that $X\notin\{X^3_8, X^5_9\}$.
\begin{enumerate}
\item Are all the extremal rays of the pseudoeffective cone of $X$ that lie in $F^\vee \setminus F^\perp$ Weyl divisors?
\item Do the statements of Conjecture \ref{WCD-X59} hold for the negative part of the pseudo-effective cone of $X$?
\end{enumerate}
\end{question}

\subsection{Open questions for cones of curves}
\label{FINAL}

Recall that in $X^n_{n+4}$, there are finitely many $(-1)$-Weyl curves, which are the lines through 2 points and the rational normal curves of degree $n$ through $n+3$ points, while in $X^n_{n+5}$ they are infinitely many.
In \cite{DM2}, the authors posed the question as to whether $X^n_{n+4}$, for $n\ge6$, and $X^n_{n+5}$, for $n\ge4$,  possess other $(-1)$-curves besides these $(-1)$-Weyl lines. 

Moreover, for non-Mori dream spaces, the Weyl orbits of $(0)$- and $(1)$-Weyl lines are infinite, see \cite[Corollary 5.8]{DM2}.
Recall that any effective divisor on $X^n_s$ intersects all $(0)$- and $(1)$-Weyl lines non-negatively, as proved in \cite[Theorem 6.7]{DM2}, see also Remark \ref{rmk35}. 

Recall that if $X^n_s$ is a Mori dream space,
the extremal rays of $\mathcal C_0$ 
are the $(0)$-Weyl lines and the $(1)$-Weyl lines, see \cref{rmk35}. 
Moreover Proposition \ref{mds}
implies that the curve $F$ spans an extremal ray of the cone of moving curves of $X^5_9$ and $X^3_8$.
Since the pseudoeffective cones of $X^5_9$ and $X^3_8$ have infinitely many extremal rays, including the Weyl divisors, then so do their duals. We propose the following conjecture.

\begin{conjecture} \label{conj-5-9}
Let $X$ be either $X^5_9$, or $X^3_8$. 
The cone $\mathcal{C}_0$ of moving curves of $X$ is generated by the following three Weyl orbits: the anticanonical curve class $F$, the $(0)$-Weyl lines and the $(1)$-Weyl lines.
\end{conjecture}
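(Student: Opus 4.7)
The plan is to establish both inclusions for the cone $\mathcal{C}_0$ with the cone $\mathcal{C}_0'$ generated by the three stated Weyl orbits.

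\textbf{Forward inclusion $\mathcal{C}_0' \subseteq \mathcal{C}_0$.} I would first verify that each generator lies in $\mathcal{C}_0$. The classes $h$ and $h-e_i$ are $0$-moving (a general line, and a line through one blown-up point, both sweep out $X$), and their full Weyl orbits remain in $\mathcal{C}_0$ because $\overline{\Eff}(X)$ is Weyl-invariant and hence so is its dual cone $\mathcal{C}_0 = \overline{\Eff}(X)^\vee$ by \cite{BDPP}. For the anticanonical curve class $F$, Proposition \ref{mds} gives $D \cdot F \ge 0$ for every effective divisor $D$ on $X$, so $F \in \overline{\Eff}(X)^\vee = \mathcal{C}_0$.

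\textbf{Reverse inclusion $\mathcal{C}_0 \subseteq \mathcal{C}_0'$ and the case $X^3_8$.} For the other direction, I would use that the extremal rays of $\mathcal{C}_0 = \overline{\Eff}(X)^\vee$ are dual to the facets of $\overline{\Eff}(X)$, so it suffices to identify the extremal rays of $\overline{\Eff}(X)$. For $X = X^3_8$, this is provided by \cite[Theorem 6.5]{SX} (already used in Proposition \ref{mds}): $\overline{\Eff}(X^3_8)$ is generated by the Weyl divisors together with $\mathbb{R}_{\ge 0}[-K_{X^3_8}]$. The next step is to verify that each facet of this description produces an extremal ray in one of the three claimed Weyl orbits: the facet supported on $(-K_X)^\perp$ produces the Weyl-invariant ray spanned by $F$ (precisely by the uniqueness argument used in the proof of Proposition \ref{mds}), while facets cut out by suitable collections of pairwise orthogonal Weyl divisors produce generators in the orbits of $h$ and $h-e_i$, paralleling the explicit hyperplane arrangements treated in Theorems \ref{strong-duality-thm-n+3} and \ref{strong-duality-thm-X^4_8}. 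The orbit computations of Section \ref{sec WP} extend here since the Cremona formulae on curves are purely combinatorial and do not depend on the Mori dream space property.

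\textbf{Main obstacle: the case $X = X^5_9$.} Here no analog of \cite[Theorem 6.5]{SX} is presently known, so the principal task is to prove: every extremal ray of $\overline{\Eff}(X^5_9)$ is either a Weyl divisor or the anticanonical ray $\mathbb{R}_{\ge 0}[-K_{X^5_9}]$. Proposition \ref{mds} already shows $\overline{\Eff}(X^5_9) \subseteq F^\vee$ and pins down $F^\perp \cap \overline{\Eff}(X^5_9)$ as the anticanonical ray; moreover $X^5_9$ has only finitely many $(-1)$-curves by \cite{DM2}, which is encouraging. The approach would be: given an extremal effective divisor $D$ not proportional to $-K_{X^5_9}$, iterate Cremona reduction via \eqref{cremona-rule} and peel off Weyl divisors from the base locus using Lemma \ref{weyl base locus}; either the process terminates on a Weyl divisor (good) or it must be obstructed. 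Ruling out hypothetical exotic extremal rays is the hardest step, and would likely combine the Weyl base-locus decomposition of Section \ref{WSBL section}, the infinite family of Weyl $r$-planes furnished by Theorem \ref{infinity}, and Riemann--Roch together with vanishing estimates on $X^5_9$. Once this description of $\overline{\Eff}(X^5_9)$ is in place, the facet-by-facet argument of the $X^3_8$ step applies verbatim to complete the proof of the conjecture.
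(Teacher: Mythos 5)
First, a point of order: the statement you are proving is stated in the paper as Conjecture \ref{conj-5-9}; the paper offers no proof of it, only motivation (Proposition \ref{mds} and the observation that $\overline{\Eff}$ has infinitely many extremal rays). So the relevant question is whether your proposal actually closes the conjecture, and it does not. Your forward inclusion is fine: $h$ and $h-e_i$ are $0$-moving, their Weyl orbits stay in $\mathcal{C}_0=\overline{\Eff}(X)^\vee$ by Weyl-invariance of the effective cone, and $F\in\mathcal{C}_0$ by Proposition \ref{mds}. The reverse inclusion is where all the substance lies, and both halves of your argument have genuine gaps.

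For $X^3_8$, knowing the extremal rays of $\overline{\Eff}$ (the Weyl divisors together with $-K$, by \cite[Theorem 6.5]{SX}) does not hand you the extremal rays of the dual cone. The cone $\overline{\Eff}(X^3_8)$ is not rational polyhedral: it has infinitely many extremal rays accumulating at the anticanonical ray, so there is no finite list of facets to dualize, and the ``facet-by-facet'' verification you defer to the pattern of Theorems \ref{strong-duality-thm-n+3} and \ref{strong-duality-thm-X^4_8} does not transfer --- those results rest on the strong duality $\mathcal{C}_k=\mathcal{D}_k^\vee$ of \cite{BDPS} and on the finite chamber decomposition, both established only for Mori dream spaces. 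What you would actually have to prove is that every $1$-cycle class meeting all Weyl divisors and $-K$ nonnegatively is a nonnegative combination of $F$, the $(0)$-Weyl lines and the $(1)$-Weyl lines; that is precisely the content of the conjecture, and your proposal asserts it rather than proves it. For $X^5_9$ the situation is worse: you correctly note that no analogue of the effective-cone description is known, and your plan (iterate Cremona reduction, peel off Weyl divisors, and ``rule out hypothetical exotic extremal rays'') is a research program rather than an argument --- no mechanism is supplied for why the reduction must terminate on a Weyl divisor, nor for excluding exotic rays. As written, the proposal reduces the conjecture to two statements that are themselves open.
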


For higher dimension, we expect that a similar statement to that of the above conjecture is false in general.
Indeed, the pseudoeffective cone will have a positive part, since the anticanonical class $F$ is no longer moving. However, we ask if the negative part of the pseudoeffective cone continues to behave well.

\begin{question}
Let $X=X^n_s$, for any  $n,s$. A divisor in the negative part $F^\vee$ of the pseudoeffective cone of $X$ is effective if and only if it intersects non negatively all the $(0)$-Weyl lines and $(1)$-Weyl lines?
\end{question}

Finally, we propose the following open questions.
\begin{question}\label{quest} 
Let either $X=X^n_{n+4}$ and $n\ge5$, or $X=X^3_8$. 
\begin{enumerate}
\item Is it true that (1),(2),(3) of Theorem \ref{main} are equivalent for $X$?
\item 
Which are the extremal rays of cones $\mathcal{C}_k$ of $k$-moving curves in $X$?
\item Does the strong duality hold for $X$? Namely for $0\leq r\leq n-1$ do we have
$\mathcal{C}_r=\mathcal D_r^\vee?$
\end{enumerate}
\end{question}

\begin{remark}
Some implications of the statements of Question \eqref{quest} hold for arbitrary $s$ in non-Mori dream spaces $X^n_s$. In this direction, we recall 
the following.
\begin{enumerate}
\item The cone of moving curves $\mathcal{C}_0$  contains the two infinite Weyl orbits:  $(0)$-Weyl lines and $(1)$-Weyl lines  as generators.

\item The effective Weyl orbits of $h-e_i$, $e_j$ and 
$c_\mathcal{I}$ with $|\mathcal{I}|=n-k+1$
are among the 
generators of  the cone of $k$-moving curves $\mathcal{C}_k$. 

\item The inclusion $\mathcal{C}_k\subset \mathcal{D}_k^\vee$ always holds.
\end{enumerate}
\end{remark}

\section{Weyl expected dimension of linear systems}\label{section dimension}
In \cite[Definition 3.2]{BDP-TAMS}, the authors proposed  the notion of 
{\it linear expected dimension}, which takes into account the contribution of linear cycles in the stable base locus. They also proved that this is in fact the actual dimension for any linear system in $X^n_s$ with $s\le n+2$.
In \cite[Definition 6.1]{BraDumPos3},  the more general notion of {\it secant linear dimension} for linear systems in $X^n_s$ with $s\le n+3$ was proposed, and in \cite{LPSS}  it was proved to match the actual dimension.
In the cases $X^3_7$ and $X^4_8$, the definition of 
{\it  Weyl expected dimension} was given in \cite[Definition 2]{BDP-Ciro}.

Here we propose a new definition of {\it Weyl expected dimension}, that extends all the previous ones and  applies to  any $X_s^n$. 

\begin{definition}\label{weyl exp} 
Let $D$ be an effective divisor on $X=X^n_s$. We call  Weyl virtual (affine) dimension of the linear system $\mathcal{L}=\PP(\HH^0(X,\OO(D)))$ the number

\begin{equation}\label{wdim}\wdim(\LL)=\chi(X, \mathcal{O}(D))+\sum_{r=1}^{n-1}  \sum_{W\in \mathcal W(r)}(-1)^{r+1}\binom{n+k_W(D)-r-1}{n}, 
\end{equation} 
where $\mathcal W(r)$ is the set of all the Weyl $r$-planes and the multiplicity of containment $k_W(D)$ of the Weyl plane $W$  in the base locus of $D$ can be explicitly computed by Remark \ref{explanation}.

We define the Weyl expected (affine) dimension of $\LL$ as 
 the maximum between  $\wdim(\LL)$ and $0$.
\end{definition}

\begin{remark}\label{explanation}
We recall that, given $r$, a Weyl $r$-plane $W\in \mathcal W(r)$ is an element of the effective orbit $(W_s^n\cdot L_I)^+$ for an index set $I$ of length $r+1$, and, by Proposition \ref{1-1}, it corresponds to an element of the effective orbit $(W_s^n\cdot c_I)^+$ of the  curve class $c_I$, \eqref{Cremona zero}.
By Lemma \ref{weyl base locus}, the multiplicity of containment $k_W(D)$ of a Weyl plane $W=w(L_I)$  in the base locus of  a divisor $D$ is obtained intersecting $D$  with the unique curve sweeping it out, $w(c_I)$, 
$$k_W(D)=\max( -w(c_I) \cdot D,0).$$ 
\end{remark}

\begin{remark}
 We observe that the sum \eqref{wdim} is finite for any effective divisor $D$. In fact, 
for every $I$,  even when the orbit of the Weyl $r$-plane $L_I$ is infinite, 
only finitely many elements of the latter, $W\in (W_s^n\cdot L_I)^+$, can be contained in the base locus of $D$, 
because the number of components of the base locus is finite, by Lasker-Noether Theorem.
 Consequently, since $k_W(D)$ is the multiplicity of containment of $W$ in the base locus of $D$,   
 only finitely many Newton binomials in \eqref{wdim} are nonzero.
\end{remark}
\begin{remark}\label{rem}
We observe that for $s=n+2$ the Weyl expected dimension of a non-empty linear system equals its linear expected dimension defined in \cite{BDP-TAMS}, $\wdim(\LL)=ldim(\LL)$.
For $s=n+3$, the Weyl expected dimension of a linear system equals the secant linear expected dimension, $\wdim(\LL)=\sigma\mathrm{ldim}(\LL)$, defined in \cite{BraDumPos3}.
For $n=4$, $s=8$, this notion equals to the Weyl expected dimension for divisors in $X^4_8$ defined in \cite{BDP-Ciro}.
We conclude that Definition \ref{weyl exp} generalizes the notions of  dimensions of linear systems expected to hold for all Mori Dream Spaces of the form $X^n_s$.
\end{remark}

\begin{proposition}  The Weyl virtual dimension is preserved under the Weyl group action.
\end{proposition}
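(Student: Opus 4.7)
The Weyl virtual dimension splits as $\wdim(\LL) = \chi(X,\mathcal{O}(D)) + S(D)$, where
\[
S(D) := \sum_{r=1}^{n-1}\sum_{W\in \mathcal{W}(r)}(-1)^{r+1}\binom{n+k_W(D)-r-1}{n},
\]
and the plan is to verify the Weyl invariance of each summand separately. By the presentation of $W^n_s$ via standard Cremona transformations, it is enough to check invariance under a single $w = \Cr_\Gamma$.

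For the Euler characteristic, I would invoke Hirzebruch--Riemann--Roch: $\chi(X,\mathcal{O}(D))$ is a universal polynomial in intersection numbers of $D$ with powers of itself and the Chern classes of $X$. The Weyl action is an isometry for the intersection pairing (this is precisely how \eqref{cremona-rule} and \eqref{Cremona curves} were set up, via preservation of the Dolgachev--Mukai pairing and of $K_X$, see \cite{DPR}), so the Todd class of $X$ is fixed by $w$ and therefore $\chi(X,\mathcal{O}(D)) = \chi(X,\mathcal{O}(w(D)))$. Equivalently, one may use the classical explicit formula $\chi = \binom{d+n}{n}-\sum_{i=1}^s\binom{m_i+n-1}{n}$ and verify Cremona invariance directly from \eqref{cremona-rule} by a polynomial identity in $d$ and $b_\Gamma$, which is standard.

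For the correction $S(D)$, the strategy is to produce a bijection between the Weyl $r$-planes contributing nontrivially to $S(D)$ and those contributing to $S(w(D))$ via $W \longmapsto w(W)$, and to check that each binomial summand is preserved. If $W = v(L_I)$ is a Weyl $r$-plane, then by Lemma \ref{who-sweeps-what} combined with Proposition \ref{1-1}, $W$ is swept out by a curve of class $v(c_I)$, and similarly $w(W) = (w\circ v)(L_I)$ is swept out by $(w\circ v)(c_I)$. Applying Lemma \ref{weyl base locus} and using the Weyl-invariance of the intersection pairing,
\[
k_{w(W)}(w(D)) = \max\bigl(-(w\circ v)(c_I)\cdot w(D),\,0\bigr) = \max\bigl(-v(c_I)\cdot D,\,0\bigr) = k_W(D),
\]
so the summand indexed by $W$ matches the summand indexed by $w(W)$ exactly. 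Since $\mathcal{W}(r)$ is indexed by $r$, which is also preserved, the whole sum $S(D)$ is preserved term by term.

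The main obstacle is the effectivity clause inside Definition \ref{weyl plane}: a priori in the non-Mori dream cases, $w$ need not carry an effective Weyl $r$-plane to another effective Weyl $r$-plane, so the candidate map $W \mapsto w(W)$ might not land in $\mathcal{W}(r)$. To handle this I would restrict attention to the $W$ that actually contribute, namely those with $k_W(D)\ge r+1>0$: such $W$ is a genuine component of the stable base locus of $D$. Since $w$ is a birational self-map and base loci transform equivariantly under birational maps, the strict transform $w(W)$ is an effective subvariety sitting inside $\Bs(w(D))$, hence it belongs to $\mathcal{W}(r)$, and the inverse $w^{-1}$ provides the reverse correspondence. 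Making this effectivity argument precise (and confirming that no contributing term is lost or duplicated) is the delicate part of the proof; once carried out, term-by-term matching gives $S(D) = S(w(D))$ and therefore $\wdim(D) = \wdim(w(D))$.
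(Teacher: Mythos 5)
Your decomposition into two separately invariant pieces fails at the first step: for $n\ge 3$ the Euler characteristic $\chi(X,\OO(D))=\binom{d+n}{n}-\sum_i\binom{m_i+n-1}{n}$ is \emph{not} preserved by the Weyl action. The Hirzebruch--Riemann--Roch argument breaks down because a standard Cremona transformation is not an isomorphism in codimension $\ge 2$, so its induced action on $\Pic$ does not preserve the intersection ring (already $D^n$ is not preserved: for $D=H$ on $X^3_4$ one gets $D^3=1$ but $\Cr_\Gamma(D)^3=27-32=-5$). Concretely, take $D=2H-E_1$ on $X^3_4$ and $\Gamma=\{1,2,3,4\}$: then $\Cr_\Gamma(D)=5H-4E_1-3E_2-3E_3-3E_4$, and $\chi(D)=10-1=9$ while $\chi(\Cr_\Gamma(D))=56-20-30=6$. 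The deficit $3$ is recovered exactly by the correction term: the three lines $L_{1j}$, $j=2,3,4$, lie in the base locus of $\Cr_\Gamma(D)$ with multiplicity $2$ and each contributes $\binom{3}{3}=1$, whereas $S(D)=0$. The same example defeats your bijection on contributing Weyl $r$-planes even after the effectivity fix: the $L_{1j}$ contribute to $S(\Cr_\Gamma(D))$ but are spanned by points of $\Gamma$, hence are contracted by $\Cr_\Gamma^{-1}=\Cr_\Gamma$, so they have no effective image and no matching term in $S(D)$. This creation and destruction of base locus at the indeterminacy locus is exactly the phenomenon flagged in Example \ref{eg}.

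The actual content of the proposition is that these two failures cancel: the jump in $\chi$ equals minus the jump in the correction sum contributed by Weyl $r$-planes supported on the indeterminacy locus of $\Cr_\Gamma$. Your term-by-term argument is sound only for Weyl $r$-planes interpolating a point $p\notin\Gamma$; there the sweeping curve is not contracted and $k_W(D)=k_{\Cr_\Gamma(W)}(\Cr_\Gamma(D))$ follows from invariance of the intersection pairing, exactly as you compute. The paper's proof isolates precisely this part and disposes of everything else (the $\chi$ term together with the Weyl $r$-planes based only at points of $\Gamma$) by dropping the point $p$, reducing to $X^n_{n+3}$, and invoking the identity $\wdim=\dim$ from \cite{LPSS}, where invariance is automatic because $\dim$ is preserved by the Weyl action; one then inducts on $s$. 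Some such cancellation mechanism is unavoidable, so the proof cannot be repaired merely by tightening the effectivity bookkeeping.
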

\begin{proof}
Let $D$ be an effective divisor on $X$ and $\mathcal{L}=\PP(\HH^0(X,\OO(D)))$ the associated linear system.
    Let $\Cr=\Cr_\Gamma$ be a standard Cremona transformation and we denote by $\Cr(\LL)=\PP(\HH^0(X,\OO(\Cr(D))))$. We need to prove that 
    $$\wdim(\mathcal L)=\wdim( \Cr(\LL))$$
We first consider the case $s=n+4$ and $D\in \Pic(X^n_{n+4})$ an arbitrary effective divisor in $X^n_{n+4}$.
It is well known that the dimension of a linear system is invariant under Weyl  group actions.

Let $G\in \Pic(X^n_{n+3})$ be a divisor obtained from $D$ by forgetting one point $p$ that is not in $\Gamma$; obviously
$G$ is an effective divisor on $X^n_{n+3}$. {Moreover, also $\Cr(\LL_G)$ is an effective divisor in $X^n_{n+3}$.}
By \cite{LPSS} we know that the dimension of any non-empty linear system in $X^n_{n+3}$ is equal to  the {\it secant linear dimension}, which coincides with the virtual Weyl dimension. Therefore we obtain
$$\wdim(\LL_G)= \dim(\LL_G)=\dim(\Cr(\LL_G))=\wdim (\Cr(\LL_G)).$$
We remark that the Weyl $r$-planes that appear in the base locus of $G$ only involve the Weyl $r$-planes based exclusively at the points away from $p$; in particular the $\wdim$ formula carries all fixed linear subspaces through base points of $\Gamma$.

We claim that, for all the remaining Weyl $r$-planes $W$ interpolating the point $p$, which is away from the indeterminacy locus, we have 

$$k_W(D)=k_{\Cr(W)}(\Cr(D)).$$

Remark first that the birational transformation $\Cr$ does not contract the Weyl $r$-planes $W$. Indeed, the transformation $\Cr$ only contracts linear spaces spanned by the points of $\Gamma$, and $p\notin \Gamma$. Moreover, we have seen that each Weyl $r$-plane $W$, is spanned by exactly one curve class say $c$. In particular $c$ interpolates the point $p$ so the curve class $c$ is not contracted by the birational map $\Cr$.

We further remark that the standard intersection product between a curve and a divisor is invariant under the Weyl group action \cite{DM2}; one concludes

$$k_W(D)=\max(-c\cdot D,0)
=\max(-\Cr(c)\cdot \Cr(D),0)
=k_{\Cr(W)}(\Cr(D)).
$$
To conclude for $s\geq n+5$ we proceed by induction on the number of points starting from $s=n+4.$
\end{proof}

In the next theorem we list a  collection of divisors known to have the Weyl expected dimension.
\begin{theorem}\label{results} Assume that one of the following conditions holds:
\begin{enumerate}
\item  $D \in \Pic(X^{n}_{s})$ is a divisor such that $D\cdot F\geq D\cdot h$.
\item  For $n\geq 4$, $D \in \Pic(X^{n}_{n+5})$ is a
cone over an effective divisor $D'\in \Pic(X^3_{8})$. 
\item For  $n\geq 4$, $D \in \Pic(X^{n}_{n+6})$ is a
cone over an effective divisor $D'\in \Pic(X^3_{9})$ and multiplicities bounded by $8$.
\item For $n\geq 3$, $D\in \Pic(X^{n}_{n+7})$ is a
cone over an effective divisor $D'\in \Pic(X^2_{9})$.
\item For $n\geq 3$, $D \in \Pic(X^{n}_{s+n-2})$ is a cone with vertex set of size $n-2=s-k^2$ over an effective divisor $D'\in \Pic(X^2_{k^2})$ with homogeneous multiplicities.
\item For $n\geq 3$, $D \in \Pic(X^{n}_{s+n-2})$ is a cone over an effective divisor $D'\in \Pic(X^2_{s})$ with multiplicities bounded by $11$.
\item For $n\geq 3$, $D \in \Pic(X^{n}_{s+n-2})$ is a cone over an effective divisor $D'\in \Pic(X^2_{s})$ with homogeneous multiplicities bounded by $42$.
\end{enumerate}
Then the dimension of the linear system associated to $D$ coincides with its Weyl expected dimension.    
\end{theorem}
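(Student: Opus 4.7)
The proof naturally splits along the structure of the theorem: case (1) is handled by a single uniform argument controlling the stable base locus via the Weyl-invariant curve $F$, while cases (2)--(7) all reduce, via a cone construction, to known dimensionality results for lower-dimensional blow-ups.

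For case (1), the plan is to exploit the inequality $D\cdot F\ge D\cdot h$ to force the stable base locus of $D$ to be supported on Weyl $r$-planes whose multiplicities of containment are exactly those prescribed by \cref{weyl base locus} and \cref{explanation}. Since $D\cdot F$ is a Weyl-invariant quantity, the numerical region cut out by this condition is itself Weyl-equivariant; after Cremona reduction and an application of the Base Locus Lemma (\cite{dumpos}), each component of the stable base locus is a Weyl $r$-plane and the exact multiplicity equals $\max(-w(c_I)\cdot D,0)$. A Riemann--Roch/inclusion-exclusion argument in the spirit of \cite{BraDumPos3,LPSS}, subtracting the contributions of these linear cycles from $\chi(X,\OO(D))$, then produces $h^0(X,\OO(D))$ and matches exactly the sum in \eqref{wdim}.

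For cases (2)--(7), the strategy is uniform: each $D\in\Pic(X^n_s)$ is described as a cone with vertex at $v$ points over an effective divisor $D'\in\Pic(X^m_{s-v})$ in a lower-dimensional blow-up (with $m=2$ or $3$ and $v=n-m$). Projection from the vertex gives the key identification $h^0(X^n_s,D)=h^0(X^m_{s-v},D')$. I would then verify a \emph{cone-preservation principle}: every Weyl $r$-plane component of the Weyl base locus of $D'$ lifts to a Weyl $(r+v)$-plane component of the Weyl base locus of $D$ (the cone), with the same multiplicity of containment; and the additional Weyl planes through subsets of the vertex points are precisely those predicted by \cref{weyl base locus} applied to the curve classes sweeping them. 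Combining these identifications, the Weyl virtual dimension of $D$ in \eqref{wdim} collapses to the Weyl virtual dimension of $D'$. The conclusion then follows by invoking the known dimensionality results for the relevant base cases: Nagata-type/Evain results for $X^2_{k^2}$ and for bounded multiplicities on $X^2_s$; the description of the effective cone of $X^3_8$ from \cite{SX}; and explicit verification for $X^3_9$ with small multiplicities.

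The main obstacle is the cone-preservation principle itself: one must show that no ``new'' components appear in the stable base locus of $D$ beyond cones over components of $D'$ and Weyl planes through vertex points, and that the multiplicities in \cref{weyl base locus} transport cleanly under the inclusion $X^m_{s-v}\hookrightarrow X^n_s$ as the base of a cone. The delicate part is the combinatorial accounting of Weyl orbits of curve classes $w(c_I)$ interpolating vertex points, together with the comparison of the binomial correction terms in \eqref{wdim} between $D$ and $D'$; once these match, the theorem follows term-by-term.
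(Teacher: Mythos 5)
Your proposal follows essentially the same route as the paper: case (1) is the linearly-obstructed situation of \cite{BDP-TAMS} (Theorem 5.3 there), where the inequality $D\cdot F\ge D\cdot h$, i.e.\ $nd\ge\sum_i m_i$, forces the base locus to consist only of linear cycles so that $\wdim$ collapses to the linear expected dimension; cases (2)--(7) are handled by exactly the cone-reduction you describe, which is \cite[Lemma 6.7]{BraDumPos3}, combined with the known base cases in dimensions $2$ and $3$. Two small corrections. First, your side remark that the region $\{D\cdot F\ge D\cdot h\}$ is Weyl-equivariant is false: $D\cdot F$ is Weyl-invariant, but $D\cdot h$ is the degree of $D$, which standard Cremona transformations change, so you cannot Cremona-reduce while staying inside this region; fortunately the argument of \cite{BDP-TAMS} does not need this, since the hypothesis directly excludes all non-linear cycles from the base locus. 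Second, the base case for $X^3_8$ is the dimension count of De Volder--Laface \cite{DL} (and Castelnuovo \cite{Castelnuovo} for $X^2_9$, \cite{BDP-nine} for $X^3_9$ with multiplicities at most $8$), not the cone description of \cite{SX}, which says nothing about $h^0$. With those citations in place your outline matches the paper's proof.
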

\begin{proof}
The first statement follows from Theorem 5.3 of \cite{BDP-TAMS} and Remark \ref{rem}.

We recall that any effective divisor in $\Pic(X^2_{9})$ and $\Pic(X^3_{8})$ has the Weyl expected dimension by \cite{Castelnuovo} and \cite{DL}, respectively. 
By \cite[Theorem 5.8]{BDP-nine} this is true also for 
any effective divisor in $\Pic(X^3_{9})$ with multiplicities bounded by $8$.

Any effective divisor in $\Pic(X^2_{k^2})$ and all equal multiplicities has the Weyl expected dimension by \cite{CM}. This result also holds for effective divisors in $\Pic(X^2_s)$ with arbitrary multiplicities bounded by $11$ by \cite{DJ},
 or with homogeneous multiplicities bounded by $42$ \cite{dumnicki2}. 
 
 In all these cases, arguing as in the proof of \cite[Lemma 6.7]{BraDumPos3} it is easy to prove that $\wdim$ is preserved by cone reduction.
Hence
the result follows by taking cones over $D'$. 
\end{proof}

Notice that we give the definition of Weyl expected dimension in $X^n_s$ for any $s$, but if $s\ge n+6$, then there exist linear systems of dimension different from the Weyl expected dimension, as shown in the following example. 

\begin{example} \label{ex-quadric}
In $X^3_9$ consider the divisor $D=4H-\sum_{i=1}^9 2E_i$. Then the associated linear system has negative Weyl expected dimension but it is not empty because it contains the double quadric through the nine points.
\end{example}

Hence we pose the following question.
\begin{question}\label{conj-wdim}
For $s= n+4$, does the dimension of any linear system in $X^n_s$ coincide with its Weyl expected dimension?
\end{question}

The answer to Question \ref{conj-wdim} is affermative  for $s=n+2$ by \cite{BDP-TAMS} and for $s=n+3$ by \cite{LPSS} (see also \cite{BraDumPos3}) and Remark \ref{rem}, while for $n=4$ it coincides with Questions in Section 6 of \cite{BDP-Ciro}. Theorem \ref{results} presents a list of non Mori Dream Spaces for which this question has an affermative answer.

We finally recall that in the planar case, $X^2_s$, Question \ref{conj-wdim} is known as the Segre-Harbourne-Gimigliano-Hirschowitz conjecture. Even if important progress has been made in some particular cases, partially exposed in Theorem \ref{results}, this conjecture is still open in full generality, see  \cite{BP} for a short review.

\begin{bibdiv}
\begin{biblist}

\bib{AM}{article}{
    AUTHOR = {Araujo, C.},
    AUTHOR = {Massarenti, A.},
  TITLE = {Explicit log Fano structures on blow-ups of projective spaces},
  journal = {Proceedings of the London Mathematical Society},
  volume = {113},
  year = {2016},
    pages = {445–473},
}

\bib{BDPP}{article}{
AUTHOR = {Boucksom, S.},
AUTHOR = {Demailly, J.},
AUTHOR = {P\u{a}un, M.},
AUTHOR = {Peternell, T.},
     TITLE = {The pseudo-effective cone of a compact {K}\"{a}hler manifold and
              varieties of negative {K}odaira dimension},
   JOURNAL = {J. Algebraic Geom.},
    VOLUME = {22},
      YEAR = {2013},
    NUMBER = {2},
     PAGES = {201--248},
  }

\bib{BDP-TAMS}{article}{
    AUTHOR = {Brambilla, M. C.},
    AUTHOR = {Dumitrescu, O.},
    AUTHOR = {Postinghel, E.},
TITLE={On a notion of speciality of linear systems in $\PP^n$},
JOURNAL={Trans. Amer. Math. Soc.},
VOLUME ={367}
YEAR={2015}, 
NUMBER={8}, 
PAGES={5447–5473},
}

\bib{BDP-nine}{article}{
    AUTHOR = {Brambilla, M. C.},
    AUTHOR = {Dumitrescu, O.},
    AUTHOR = {Postinghel, E.},
     TITLE = {On linear systems of $\mathbb{P}^3$ with nine base points},
   JOURNAL = {Ann.
  Mat. Pura Appl.},
    VOLUME = {195},
      YEAR = {2016},
     PAGES = {1551--1574},
}

\bib{BraDumPos3}{article}{
    AUTHOR = {Brambilla, M. C.},
    AUTHOR = {Dumitrescu, O.},
    AUTHOR = {Postinghel, E.},
     TITLE = {On the effective cone of {$\Bbb P^n$} blown-up at {$n+3$}
              points},
   JOURNAL = {Exp. Math.},
    VOLUME = {25},
      YEAR = {2016},
    NUMBER = {4},
     PAGES = {452--465},
      ISSN = {1058-6458},
}

\bib{BDP-Ciro}{book}{
AUTHOR = {Brambilla, M. C.},
    AUTHOR = {Dumitrescu, O.},
    AUTHOR = {Postinghel, E.},
     TITLE = {Weyl cycles on the blow-up of $\PP^4$ at eight points},
     BOOKTITLE= {The art of doing algebraic geometry},
     PUBLISHER={Birkh\"auser Cham},
     SERIES = {The art of doing algebraic geometry, Trends in Mathematics},
     PAGES={1-21},
     YEAR = {2023},
}

\bib{BDPS}{article}{
	AUTHOR = {Brambilla, M. C.},
	AUTHOR = {Dumitrescu, O.},
	AUTHOR = {Postinghel, E.},
		AUTHOR = {Santana Sanchez, L.},
	TITLE = {Duality and Polyhedrality of cones for Mori Dream Spaces},
year={2025}
journal={Math. Z.}
volume ={309} 
}

\bib{BP}{article}{
    AUTHOR = {Brambilla, M. C.},
    AUTHOR = {Postinghel, E.},
     TITLE = {Towards Good Postulation of Fat Points, One Step at a Time},
   JOURNAL = {Boll Unione Mat Ital},
    VOLUME = {},
      YEAR = {2025},
    NUMBER = {},
     PAGES = {},
}

\bib{BCP}{book}{
    AUTHOR = {Bus\'e, L.},
    AUTHOR = {Catanese, F.},
    AUTHOR = {Postinghel, E.},
     TITLE = {Algebraic curves and surfaces},
          SUBTITLE = {A history of shapes},
    SERIES = {SISSA Springer Series},
    VOLUME={4},
 PUBLISHER = {Springer Cham},
      YEAR = {2023},
     PAGES = {XIV+205},
}

\bib{pragmatic}{article}{
    AUTHOR = {Cacciola, S.},
    AUTHOR = {Donten-Bury, M.},
    AUTHOR = {Dumitrescu, O.},
    AUTHOR = {Lo Giudice, A. and Park, J.},
     TITLE = {Cones of divisors of blow-ups of projective spaces},
   JOURNAL = {Matematiche (Catania)},
    VOLUME = {66},
      YEAR = {2011},
    NUMBER = {2},
     PAGES = {153--187},
}

\bib{CCF}{article}{
AUTHOR = {Casagrande, C.},
AUTHOR = {Codogni, G.},
AUTHOR = {Fanelli, A.},
     TITLE = {The blow-up of {$\Bbb {P}^4$} at 8 points and its {F}ano
              model, via vector bundles on a del {P}ezzo surface},
   JOURNAL = {Rev. Mat. Complut.},
  FJOURNAL = {Revista Matem\'{a}tica Complutense},
    VOLUME = {32},
      YEAR = {2019},
    NUMBER = {2},
     PAGES = {475--529},
 
}

\bib{Castelnuovo}{article}{
    AUTHOR = {Castelnuovo, G.},
     TITLE = {Ricerche generali sopra i sistemi lineari di curve piane},
   JOURNAL = {Mem. Accad. Sci. Torino},
    VOLUME = {42},
      YEAR = {1891},
    NUMBER = {2},
}

\bib{CT}{article}{
    AUTHOR = {Castravet, A. M.},
    AUTHOR = {Tevelev, J.},
     TITLE = {Hilbert's 14th problem and {C}ox rings},
   JOURNAL = {Compos. Math.},
  FJOURNAL = {Compositio Mathematica},
    VOLUME = {142},
      YEAR = {2006},
    NUMBER = {6},
     PAGES = {1479--1498},
}

\bib{CLO}{article}{
    AUTHOR = {Coskun, I.},
    AUTHOR = {Lesieutre, J.},
    AUTHOR = {Ottem, J. C.},
     TITLE = {Effective cones of cycles on blowups of projective space},
   JOURNAL = {Algebra Number Theory},
    VOLUME = {10},
      YEAR = {2016},
    NUMBER = {9},
     PAGES = {1983--2014},
}

\bib{CM}{article}{
    AUTHOR = {Ciliberto, C.},
    AUTHOR = {Miranda, R.},
     TITLE = {Nagata's conjecture for a square or nearly square number of points},
   JOURNAL = {Ricerche di Matematica},
    VOLUME = {55},
      YEAR = {2006},
    NUMBER = {},
     PAGES = {71--78},
}

\bib{deFernex}{article}{
      title={On the Mori cone of blow-ups of the plane}, 
      author={de Fernex, T.},
      year={2010},
      journal={arXiv: 1001.5243},
}

\bib{DL}{article}{
author={De Volder, C.}
author={ Laface, A.},
title={On linear systems of $\PP^3$ through multiple points}, 
journal={J. Algebra},
volume={310},
number={1}
pages={207--217},
year={2007},
}

\bib{Dolgachev}{book}{
    AUTHOR = {Dolgachev, I. V.},
     TITLE = {Weyl groups and Cremona transformations},
 BOOKTITLE = {Singularities, Part 1 (Arcata, Calif., 1981)},
    SERIES = {Proc. Sympos. Pure Math.},
    VOLUME = {40},
     PAGES = {283--294},
 PUBLISHER = {Amer. Math. Soc., Providence, RI},
      YEAR = {1983},
}

\bib{Dolgachev-Ortland}{article}{
    AUTHOR = {Dolgachev, I. V.},
    AUTHOR = {Ortland, D.},
     TITLE = {Point sets in projective spaces and theta functions},
   JOURNAL = {Ast\'erisque},
    NUMBER = {165},
      YEAR = {1988},
     PAGES = {210},
}

\bib{DM3}{article}{
	AUTHOR = {Dumitrescu, O.},
	AUTHOR = {Miranda, R.},
	TITLE = {Coxeter theory for curves on blowups of $\mathbb{P}^r$},
	YEAR = {2022},
 journal={arXiv: 2205.13605}
 }

\bib{DM1}{book}{
	AUTHOR = {Dumitrescu, O.},
	AUTHOR = {Miranda, R.},
	TITLE = {Cremona Orbits in $\mathbb{P}^4$ and Applications},
      BOOKTITLE= {The art of doing algebraic geometry},
     PUBLISHER={Birkh\"auser Cham},
     SERIES = {The art of doing algebraic geometry, Trends in Mathematics},
	YEAR = {2023},
	PAGES = {161--185},
}

\bib{DM2}{article}{
	AUTHOR = {Dumitrescu, O.},
	AUTHOR = {Miranda, R.},
	TITLE = {On $(i)$ curves in blowups of $\mathbb{P}^r$},
  BOOKTITLE={Advanced Algebraic Geometry and Applications, Mathematics},
  YEAR={2024}
  VOLUME={12}
  PAGES={1--47}
}

\bib{dumpos}{article}{
	AUTHOR = {Dumitrescu, O.},
	AUTHOR = {Postinghel, E.},
	TITLE = {Vanishing theorems for linearly obstructed divisors},
	JOURNAL = {Journal of Algebra},
        VOLUME = {477},
        PAGES = {312--359},
	YEAR = {2017},
}

\bib{DP-positivityI}{article}{
	AUTHOR = {Dumitrescu, O.},
	AUTHOR = {Postinghel, E.},
	TITLE = {Positivity of divisors on blown-up projective spaces, I},
	JOURNAL = { Ann. Sc. Norm. Super. Pisa Cl. Sci.},
        VOLUME = {24},
        PAGES = {599--618},
	YEAR = {2023},
 }
\bib{DPR}{article}{
	AUTHOR = {Dumitrescu, O.},
	AUTHOR = {Priddis, N.},
	TITLE = {On divisorial $(i)$ classes},
	JOURNAL = {arXiv: 1905.00074 },
	YEAR = {2019},
}

\bib{dumnicki}{article}{
    AUTHOR = {Dumnicki, M.},
     TITLE = {An algorithm to bound the regularity and nonemptiness of
              linear systems in {$\Bbb P^n$}},
   JOURNAL = {J. Symbolic Comput.},
  FJOURNAL = {Journal of Symbolic Computation},
    VOLUME = {44},
      YEAR = {2009},
    NUMBER = {10},
     PAGES = {1448--1462},
           ISSN = {0747-7171,1095-855X},
   MRCLASS = {14Q10 (14C20)},
  MRNUMBER = {2543429},
MRREVIEWER = {Eugenii\ Shustin},
       DOI = {10.1016/j.jsc.2009.04.005},
       URL = {https://doi.org/10.1016/j.jsc.2009.04.005},
}

\bib{dumnicki2}{article}{
    AUTHOR = {Dumnicki, M.},
     TITLE = {Reduction method for linear systems of plane curves with base fat points},
   JOURNAL = {arXiv: 0606716},
      YEAR = {2006},
    NUMBER = {},
     PAGES = {},
 }
 
\bib{DJ}{article}{
    AUTHOR = {Dumnicki, M.},
     AUTHOR = {Jarnicki, W.},   
     TITLE = {New effective bounds on the dimension of a linear system in $\PP^2$},
   JOURNAL = {J. Symbolic Comput.},
    VOLUME = {42},
      YEAR = {2007},
     PAGES = {621--635},
}

\bib{Eisenbud-Popescu}{article}{
    AUTHOR = {Eisenbud, D.},
    AUTHOR = {Popescu, S.},
     TITLE = {The projective geometry of the {G}ale transform},
   JOURNAL = {J. Algebra},
    VOLUME = {230},
      YEAR = {2000},
    NUMBER = {1},
     PAGES = {127--173},
}

\bib{Hu-Keel}{article}{
    AUTHOR = {Hu, Y.},
   AUTHOR = {Keel, S.},
     TITLE = {Mori dream spaces and {GIT}},
   JOURNAL = {Michigan Math. J.},
  FJOURNAL = {Michigan Mathematical Journal},
    VOLUME = {48},
      YEAR = {2000},
     PAGES = {331--348},
}

\bib{Huizenga}{incollection}{
    AUTHOR = {Huizenga, J.},
     TITLE = {Birational geometry of moduli spaces of sheaves and
              {B}ridgeland stability},
 BOOKTITLE = {Surveys on recent developments in algebraic geometry},
    SERIES = {Proc. Sympos. Pure Math.},
    VOLUME = {95},
     PAGES = {101--148},
 PUBLISHER = {Amer. Math. Soc., Providence, RI},
      YEAR = {2017},
}

\bib{LPSS}{article}{
    AUTHOR = {Laface, A.},
	AUTHOR={Postinghel, E.},
	AUTHOR={Santana S\'{a}nchez, L. J.},
     TITLE = {On linear systems with multiple points on a rational normal
              curve},
   JOURNAL = {Linear Algebra Appl.},
  FJOURNAL = {Linear Algebra and its Applications},
    VOLUME = {657},
      YEAR = {2023},
     PAGES = {197--240},
      ISSN = {0024-3795},
   MRCLASS = {14C20 (14J17 14J70)},
  MRNUMBER = {4507643},
}

\bib{Mukai01}{article}{
   AUTHOR = {Mukai, S.},
     TITLE = {Counterexample to Hilbert’s fourteenth problem for the 3-dimensional additive group},
   JOURNAL = {RIMS Preprint},
  FJOURNAL = {},
    VOLUME = {},
      YEAR = {2001},
}

\bib{Mukai04}{incollection}{
    AUTHOR = {Mukai, S.},
     TITLE = {Geometric realization of {$T$}-shaped root systems and
              counterexamples to {H}ilbert's fourteenth problem},
 BOOKTITLE = {Algebraic transformation groups and algebraic varieties},
    SERIES = {Encyclopaedia Math. Sci.},
    VOLUME = {132},
     PAGES = {123--129},
 PUBLISHER = {Springer, Berlin},
      YEAR = {2004},
}

\bib{Mukai05}{article}{
    AUTHOR = {Mukai, S.},
     TITLE = {Finite generation of the Nagata invariant rings in A-D-E cases},
   JOURNAL = {RIMS Preprint},
  FJOURNAL = {},
    VOLUME = {},
      YEAR = {2005},
}

\bib{nagata}{article}{
AUTHOR = {Nagata, M.},
     TITLE = {On the {$14$}-th problem of {H}ilbert},
   JOURNAL = {Amer. J. Math.},
  FJOURNAL = {American Journal of Mathematics},
    VOLUME = {81},
      YEAR = {1959},
     PAGES = {766--772},
}

\bib{Payne}{article}{
    AUTHOR = {Payne, S.},
     TITLE = {Stable base loci, movable curves, and small modifications, for
              toric varieties},
   JOURNAL = {Math. Z.},
  FJOURNAL = {Mathematische Zeitschrift},
    VOLUME = {253},
      YEAR = {2006},
    NUMBER = {2},
     PAGES = {421--431},
}

\bib{SX}{article}{
	AUTHOR = {Stenger, I.},
	AUTHOR = {Xie, Z.},
	TITLE = {Cones of divisors on $\mathbb{P}^3$ blown up at eight very general points},
	JOURNAL = {arXiv: 2303.12005},
	YEAR = {2023},
}

\bib{Xie}{article}{AUTHOR = {Xie, Z.},
     TITLE = {Anticanonical geometry of the blow-up of $\mathbb P^4$ in 8 points and its Fano model},
   JOURNAL = {Math. Z.},
  FJOURNAL = {Mathematische Zeitschrift},
    VOLUME = {302},
      YEAR = {2022},
     PAGES = {2077-–2110},}

\end{biblist}
\end{bibdiv}

\end{document}